\newcommand{\cx}{{\mathbb{C}}}
\newcommand{\fps}[1]{\C\llbracket #1 \rrbracket}
\declaretheoremstyle[bodyfont=\normalfont]{noncursive}
\declaretheorem{theorem}
\declaretheorem[numberwithin=section]{lemma}
\declaretheorem[numberlike=lemma]{proposition}
\declaretheorem[numberlike=lemma]{corollary}
\declaretheorem[style=noncursive,numberlike=lemma]{definition}
\declaretheorem[style=noncursive,numberlike=lemma]{remark}
\newcommand{\im}{\ensuremath{\mbox{\rm Im}\,}}
\def\1#1{\overline{#1}}
\def\2#1{\widetilde{#1}}
\def\3#1{\widehat{#1}}
\def\4#1{\mathbb{#1}}
\def\5#1{\frak{#1}}
\def\6#1{{\mathcal{#1}}}
\newcommand{\CC}[1]{\mathbb{C}^{#1}}
\newcommand{\RR}[1]{\mathbb{R}^{#1}}
\newcommand{\lr}{\longrightarrow}
\numberwithin{equation}{section}
\newcommand{\C}{\mathbb{C}}
\newcommand{\R}{\mathbb{R}}
\newcommand{\N}{\mathbb{N}}
\newcommand{\dopt}[2]{\frac{\partial #1}{\partial #2}}
\def\label#1{\label{#1}{\sf (#1)}~}
\title[Equivalence of Cauchy-Riemann manifolds]{Equivalence of three\,--\,dimensional Cauchy-Riemann manifolds and multisummability theory}
\author {I. Kossovskiy}\thanks{The research of I.~Kossovskiy was supported by the Austrian Science Fund (FWF) and the Czech Grant Agency (GACR) grant 17-19437S}
\address{\parbox{0.8\linewidth}{%
        Department of Mathematics and Statistics, Masaryk University, Brno/\\ %
        Department of Mathematics, University of Vienna}
    }
\email{kossovskiyi@math.muni.cz, kossovi3@univie.ac.at}
\author {B. Lamel}\thanks{The research of B.~Lamel was supported by the Austrian Science Fund (FWF)}
\address{Department of Mathematics, University of Vienna}
\email{bernhard.lamel@univie.ac.at}
\author {L. Stolovitch}\thanks{The research of L. Stolovitch was supported by ANR-FWF grant "ANR-14-CE34-0002-01" for the project ``Dynamics and CR geometry'' and by ANR grant ``ANR-15-CE40-0001-03'' for the project ``BEKAM''}
\address{CNRS and Universit\'e C\^{o}te d'Azur, CNRS, LJAD,France.}
\email{stolo@unice.fr}
\keywords{CR-manifolds, holomorphic maps, analytic continuation, summability of divergent power series} 
\subjclass[2000]{32H40, 32V25}
\begin{document}
\date{\today}
\begin{abstract}
We apply
 the {\em multisummability theory} from Dynamical Systems to CR-geometry. As the main result, we show that two real-analytic hypersurfaces in $\mathbb C^2$ are formally  equivalent, if 
 and only if they are $C^\infty$ CR-equivalent at the respective point.  As a corollary, we prove that all formal equivalences between {\em real-algebraic} Levi-nonflat hypersurfaces in $\mathbb C^2$  are algebraic (and in particular convergent). By doing so, we solve a Conjecture due to N.\,Mir \cite{problist}.  
\end{abstract}
\maketitle

\date{\today}
\tableofcontents

\section{Introduction}

In his 1907 paper \cite{poincare}, H.\,Poincar\'e made the fundamental discovery that 
real-analytic hypersurfaces in complex Euclidean spaces possess nontrivial
local invariants under the action of the pseudogroup of (local) biholomorphisms. Poincar\'e looked at local biholomorphisms as {\em complex power series maps}
\begin{equation}\label{formalp}
(z,w)\lr \left(z+\sum_{k+l\geq 2} a_{kl}z^kw^l,w+\sum_{k+l\geq 2} b_{kl}z^kw^l\right), \quad (z,w)\in\CC{2},
\end{equation}
 which in a natural sense act on real-analytic hypersurfaces. (Poincar\'e was not concerned with   convergence of such maps, that is, he dealt with {\em formal} biholomorphic transformations of hypersurfaces). He then showed that the action of formal biholomorphisms on the space of $k$-jets of defining functions of hypersurfaces is not transitive for sufficiently large $k$, and it implies the existence of the above biholomorphic invariants. This work of Poincar\'e is often considered as the starting point for studying the holomorphic geometry of real submanifolds in complex space, which (in an a bit more general setting) is referred to as   {\em Cauchy-Riemann geometry} (shortly: CR-geometry). 
 
 The cited work of Poincar\'e led to numerous developments in the subject of CR-geometry and raised a large number of interesting problems, some of which are open till present. The main result of this paper completes one of those: precisely, it solves the long standing problem of constructing a {\em geometric} realization for {\em formal} biholomorphic transformations \eqref{formalp} between { arbitrary} real-analytic  hypersurfaces in $\CC{2}$. The main tool which made  such a realization of formal maps of {\em arbitrary} real-analytic hypersurfaces possible is the modern theory of summability for formal power series transformations of dynamical systems, developed in the work of   Ramis, Sibuya, Ecale, Malgrange,  Braaksma, and 
Balser in the 1990's and referred to as the {\em multisummability theory}. 
 
 Before stating our main result in detail, we shall briefly outline  basics of CR-geometry. 
A hypersurface $M\subset\C^N$ 
gets endowed with a Cauchy-Riemann (CR) structure 
by its complex tangent bundle $T^c M$, whose fibers
are the  complex tangent planes
$T_p^c M = T_p M \cap i T_p M$, $p\in M$. 
This is a complex vector bundle over $M$
whose structure operator $J \colon T^c M \to T^c M $ is just multiplication 
by $i$ in $\C^N$.  The CR 
structure bundle is the subbundle $\mathcal{V} \subset \C TM$
whose fibers $\mathcal{V}_p$ consist of vectors of the form
$X_p + i J X_p$ with $X_p \in T_p^c M$. A function $f$ is 
said to satisfy the tangential Cauchy-Riemann equation 
or simply said to be CR if $\bar L f =0$ for every section 
$\bar L$ of $\mathcal{V}$. Holomorphic functions in a neighborhood of a manifold give basic examples of CR-functions. For
more on these, we refer the reader to 
\cite{ber}.

  Given 
two hypersurfaces $M , M^* \subset \C^N$, we 
say that a 
smooth map $H\colon M \to M^* $ is CR if 
the natural extension of its 
 differential $dH \colon \C TM \to \C T(M^*)$ 
restricts to the CR-structure bundle: 
 $dH \colon  \mathcal{V} \to \mathcal{V}^*$. It turns out that 
 this first-order system of PDEs on $M$ is equivalent to 
 requiring that if $H = (H_1 , \dots , H_N)$, then the components
 $H_j$ satisfy the tangential Cauchy-Riemann 
 equations, i.e. $\bar L H_j = 0$ for every section $\bar L$ of 
 $\mathcal{V}$ and $j=1,\dots, N$. Biholomorphisms of the ambient space transforming two hypersurfaces into each other are basic examples of CR-maps. 

Let us, for the rest of this paper, {\em consider only $\mathcal{C}^\infty$ smooth CR-functions and CR-maps}.
 Then, after a choice of holomorphic 
 coordinates $Z \in \C^N$, the Taylor series $T_p f$ of a 
 CR-function $f$ at a point $p$ can be identified with 
 a formal power series $T_p f \in \fps{z-p}^N$ (see e.g. \cite{dSL13,ber}),
 and therefore a CR-map $H\colon M \to M^*$ gives rise to 
 a formal power series map 
 $T_p H = (T_p H_1, \dots , T_p H_N) \in \fps{Z-p}^N $
 for every $p\in M$. (Here and below $\fps{z-p}^N$ denotes the ring of formal power series centered at $p$).

  On the
 other hand, if $M = \{ \varrho = 0 \}$ and $M^* = \left\{ \varrho^* =0 \right\}$ are given as the vanishing sets of 
 real-analytic defining functions $\varrho, \varrho^*$, and $p\in M$,  
 we define 
  a formal CR-map $\hat H\colon (M,p) \to M^*$ as a formal 
  power series map $\hat H \in \fps{Z-p}^N$  which in addition satisfies 
  the  
 formal condition 
 $\varrho^* \circ \hat H = A \varrho $ for some  $ A \in \fps{Z-p, \overline{Z-p}}$. 
Formal CR-maps $\hat H \colon 
(M,p) \to M^*$ whose Jacobian matrix is invertible therefore 
encode the above mentioned formal obstructions 
 to finding a smooth CR-diffeomorphism $H$ between the real hypersurfaces $M\subset \C^N$
 and $M^*\subset \C^N$ satisfying
  $H(p) = \hat H(p)$.  
We are going to say that $M$ and $M^*$ are {\em formally equivalent}
 at $p\in M$ and $p^* = H(p) \in M^*$
 if there exists an invertible 
 formal CR map $\hat H \colon (M,p) \to M^*$, and that they are CR equivalent if there exists a CR 
 diffeomorphism $H\colon (M,p)
  \to M^*$. In short our discussion up to 
 this point can be summarized as 
 follows: if $M$ and $M^*$ are CR
 equivalent, they are also formally equivalent 
 (at every point).

\smallskip

 We therefore have the following 
 natural problem: {\em does every formal CR-diffeomorphism $\hat H\colon (M,p) \to (M^*,p^*)$ arise 
 as the Taylor series $T_p H$ of a smooth CR map $H \colon M \to M^*$?  Or more generally:
 if $M$ and $M^*$ are formally 
 equivalent at 
 some point, are they CR equivalent?  } 
 
 \smallskip
 
 Note that both questions make sense to ask even in the category of merely {\em smooth} hypersurfaces. However, the answer is then trivially negative: for example, the flat at the origin perturbation $M=\{\im w=|z|^2+e^{-1/|z|^2}\}$ of the quadric $Q=\{\im w=|z|^2\}$  is formally equivalent to the quadric at the origin (their Taylor series simply coincide), however, it is not difficult to compute that $M$ has a generically non-vanishing CR-curvature (e.g. \cite{chern}) and hence is not CR-diffeomorphic to $Q$.      
 
 \smallskip 
 
 Our main theorem answers both 
  questions under discussion (in the real-analytic category) in the affirmative in 
 $\C^2$. 
 In order to state it, we also need to recall the 
 notion of being Levi-flat: we 
 say that a 
 hypersurface $M\subset \C^2$
 is Levi-flat if it is foliated by
 complex curves. Equivalently, 
 we can either require that the 
 distribution $T^c M \subset TM$
 is integrable. If 
 $M$ is real-analytic, another
 equivalent condition is that 
 in suitable local holomorphic
 coordinates $(z,w)$ at $p$, 
 $M$ can (locally) 
 be written as $\{\im w = 0\}$.

\begin{theorem}\label{main}
Let $M ,M^* \subset\CC{2}$ be two real-analytic hypersurfaces. Assume that $M$ and $M^*$  are formally equivalent at their reference points $p\in M,\,p^*\in M^*$. Then  $M$ and $M^*$ are  ($C^\infty$) CR-equivalent at the respective points $p,p^*$.
 If $M,M^*$ are in addition Levi-nonflat, then the given formal equivalence $\widehat H$ between them can be realized by a ($C^\infty$) CR-diffeomorphism $H:\,M\to M^*,\,H(p)=p^*$, whose Taylor series 
 at $p$ is $\widehat H$.
\end{theorem}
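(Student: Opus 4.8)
The plan is to reduce the statement to the genuinely degenerate case and then run the procedure $M\lr\mathcal{E}(M)$ in combination with multisummability. First I would dispose of the easy cases. If $M$ is Levi-flat near $p$, then so is $M^*$ (vanishing of the Levi form is a formal invariant), and both are locally CR-equivalent to the hyperplane $\{\Im w=0\}$, so a $C^\infty$ (in fact real-analytic) CR-equivalence trivially exists and the first assertion holds. Hence I may assume $M$ is Levi-nonflat. If moreover $M$ is of finite type (minimal) at $p$, then so is $M^*$, and the convergence theorem of Baouendi--Ebenfelt--Rothschild \cite{ber1} forces $\widehat H$ to converge; thus $H:=\widehat H$ is holomorphic and realizes $\widehat H$ as a $C^\infty$ CR-map. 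It therefore remains to treat the case in which $M$ is Levi-nonflat and \emph{nonminimal} at $p$; since the Levi form is not identically zero, its order of vanishing along the complex curve contained in $M$ is some finite $k\geq 1$.

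In this remaining case I would first straighten the complex curve contained in $M$ to $\{w=0\}$ by a local biholomorphism in coordinates $(z,w)$, and bring $M$ into a pre-normal form adapted to $\{w=0\}$ in which the Levi degeneracy along the curve is of finite order $k$. I would then apply the extended correspondence $M\lr\mathcal{E}(M)$: the Segre family of $M$ is the solution graph of a second-order holomorphic ODE which, because of the nonminimality, acquires an \emph{irregular singularity} along $\{w=0\}$ whose Poincaré rank is governed by $k$. By functoriality of the construction, the given formal equivalence $\widehat H$ of $(M,p)$ with $(M^*,p^*)$ transfers to a formal equivalence $\widehat\varphi$ between the singular dynamical systems $\mathcal{E}(M)$ and $\mathcal{E}(M^*)$ preserving the singular locus, and $\widehat\varphi$ respects the antiholomorphic involution encoding the reality of $M,M^*$.

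The crucial analytic input is that such a formal equivalence $\widehat\varphi$ between these singular ODEs is multisummable (indeed $k$-summable in the single-level case) in all but finitely many directions, by the summability theory for formal conjugacies at irregular singular points (Ramis--Sibuya, Braaksma, Martinet--Ramis, \'Ecalle, and the summability theory for normalizations of singular vector fields). Multisummability produces genuine holomorphic realizations $\varphi_j$ of $\widehat\varphi$ on a sectorial cover of a punctured neighborhood of the singularity, all sharing $\widehat\varphi$ as Gevrey asymptotic expansion, whose differences across overlaps are exponentially flat (the Stokes phenomenon). A Borel--Ritt/Whitney-type patching argument then glues the $\varphi_j$ into a single map that is $C^\infty$ up to and across $\{w=0\}$ with formal Taylor expansion $\widehat\varphi$. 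Translating this back through $\mathcal{E}(\cdot)\lr(\cdot)$, while using that $\widehat\varphi$ respects the reality structure, yields a $C^\infty$ CR-map $H\colon M\to M^*$ with $H(p)=p^*$ whose formal expansion at $p$ equals $\widehat H$; invertibility of $H$ follows since $\widehat H$ is invertible and is the expansion of $H$. This simultaneously gives the $C^\infty$ CR-equivalence and the ``moreover'' statement.

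I expect the heart of the difficulty to lie in two linked points. The first is verifying that the conjugacy problem on the dynamical side falls precisely within a class for which multisummability holds, with an explicitly controlled set of critical directions determined by $k$: this requires pinning down the exact irregular (and possibly resonant) type of the singular ODE $\mathcal{E}(M)$ and citing or adapting the correct summability theorem. The second, and most delicate, is the passage from the sectorial sums back to a single \emph{$C^\infty$ CR}-map across the complex curve: since the Stokes differences are flat but generically nonzero, smoothness at the points of $\{w=0\}$ must be extracted by a careful asymptotic/Whitney argument, and one must check that the glued map genuinely sends $M$ into $M^*$ as a CR-map rather than merely solving the conjugacy formally. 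Controlling these two steps is the essential content of the proof.
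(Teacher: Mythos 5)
Your overall strategy (reduce to the Levi-nonflat nonminimal case via \cite{ber1}, pass to the associated singular ODEs, and invoke multisummability) is the right one, but there are three concrete gaps, two of which concern steps that would fail as stated. First, you assume that every nonminimal Levi-nonflat $M$ carries a second-order ODE $\mathcal E(M)$ of the required singular type. This is only true at \emph{generic} nonminimal points, i.e.\ when $M\setminus X$ is Levi-nondegenerate so that $M$ can be written in the form \eqref{admissible} and the implicit-function-theorem elimination of the Segre parameters goes through. At exceptional nonminimal points (where $M\setminus X$ accumulates Levi-degenerate points) the second-order construction breaks down, and one needs the machinery of Section 4: associated ODE systems of order $k+1$ as in \eqref{assocd}, the blow-up of \autoref{blowuplemma} together with a decomposition $H=H_2\circ H_1$ to reduce \autoref{initterms} to the generic case, and a further blow-up in the Segre parameter space governed by the \emph{pure order} when no $k$-nondegeneracy holds. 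None of this is visible in your outline, and it is the bulk of the actual proof. Second, you appeal to a summability theorem for \emph{formal conjugacies} between the singular ODEs. The available theorems (\autoref{final-multisum}, \autoref{general-multisum}) concern formal power series \emph{solutions in one variable} of systems $w^m\,dY/dw=A(w,Y)$, whereas $\widehat H$ is a two-variable series. The missing idea is the reduction: one expands $F,G$ in powers of $z$, shows from the transformation rule that the low-order coefficient functions $f_0,g_0,f_1,g_1$ (and their barred counterparts) satisfy a nonlinear singular ODE system \eqref{merom} in $w$ alone, obtains their multisummability, and then reconstructs the full map from these data by solving a parametrized Cauchy problem via the Ovsyannikov theorem, as in \eqref{Finally}. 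Without this reduction the multisummability theory simply does not apply.

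Third, your final step --- gluing the sectorial sums into a single map across $\{w=0\}$ by a Borel--Ritt/Whitney argument --- is the wrong move. Since the Stokes differences between the sectorial sums are in general nonzero (this is exactly what produces the counterexamples of the Divergence Theorem), no single holomorphic map on a punctured neighborhood exists, and a Whitney-type patching with cutoffs would destroy holomorphy in $w$ and hence the CR property of the restriction to $M$. The paper avoids gluing altogether: in normal coordinates $M$ near $p$ is contained in a \emph{tangential sectorial domain} $\Delta\times\bigl(S^+\cup\{0\}\cup S^-\bigr)$ with $S^\pm$ containing $\RR{\pm}$, so only the two sums $H^\pm$ on $\Delta\times S^\pm$ are ever needed; their restrictions to $M$ automatically assemble into a $C^\infty$ CR-map because they share the same asymptotic expansion. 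Finally, the point you correctly flag as delicate --- that the glued map actually sends $M$ into $M^*$ rather than merely conjugating the ODEs --- is settled in the paper not by any gluing argument but by expanding the complexified defining identity \eqref{basicformal} in $z$ and $\xi$ and invoking the \emph{uniqueness} of ${\bf k}$-multisums in a fixed multidirection: each coefficient is an analytic expression in the multisummable data, vanishes formally, and hence vanishes identically. You should supply these three ingredients before the argument can be considered complete.
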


We shall particularly emphasize here that the proof of 
\autoref{main} exhibits an 
exciting and surprising 
application to CR-geometry of the {\em multisummability 
theory} from Dynamical Systems mentioned above (see Section 2.6 for details and references here). 

 We shall further note that the regularity asserted in \autoref{main} cannot be improved further! This 
is discussed below.

Finally, we shall explain that in the Levi-flat case a smooth realization of an arbitrary formal CR-diffeomorphism is not possible: for example, any map $z\mapsto f(z),\,w\mapsto w,\,  f(0)=0,\,f'(0)\neq 0$ with a formal and divergent $f(z)$ is a formal CR-automorphism of the Levi-flat model  $\{\im w = 0\}$, which can not be realized by a smooth CR-automorphism due to the divergence of $f(z)$. 
\smallskip

The content of 
\autoref{main} is the endpoint
(in $\C^2$) 
of a long development. The 
starting point was the 
case of Levi-nondegenerate
hypersurfaces 
$M,M^* \subset \C^2$. We 
recall that Levi-nondegeneracy
refers to 
the lowest order obstruction 
to being Levi-nonflat, i.e. $T^c M$ being nonintegrable: one says
that $M$ is Levi-nondegenerate 
at $p$ if 
for any two sections 
$X, Y$ of  $T^c M$ with 
$X_p$ and  $Y_p$ (real) linearly 
independent in $T^c M$,  the commutator $[X , Y]_p \notin T^c_p M$.  Levi-nondegenerate hypersurfaces in $\C^2$ have been classified 
up to local holomorphic equivalence, i.e. up to 
 agreeing in 
suitable local holomorphic coordinates, 
by E. Cartan \cite{cartan} (this
was later generalized  for 
Levi-nondegenerate hypersurfaces in $\C^N$,  $N>2$, by Tanaka~\cite{tanaka} and Chern-Moser~\cite{chern}). It turns out that 
the only obstruction to holomorphic equivalence of two 
Levi-nondegenerate  
hypersurfaces $M$ and $M^*$ in
$\C^2$ is that they are formally 
equivalent, and that 
in fact every 
formal CR-equivalence
between
Levi-nondegenerate hypersurfaces 
converges. 

Later on, it was shown 
by Baouendi, Ebenfelt, and Rothschild \cite{ber1} that this convergence 
phenomenon persists in the 
the case where the hypersurfaces are of finite type. Here one says
that a hypersurface $M\subset \C^2$ is 
of finite type at $p$ if for 
for any two sections 
$X, Y$ of  $T^c M$ with 
$X_p$ and  $Y_p$ (real) linearly 
independent in $T^c M$,  some commutator $[X , [X, \dots, [X,Y] \dots ]]_p \notin T^c_p M$.

The situation is quite different 
in the Levi-flat case: here, 
there are plenty of 
non-convergent formal maps, as 
any map of the form 
$H(z,w) = (f(z,w) , g(w)) \in \fps{z,w}^2$ 
which 
satisfies 
$g(w) = \overline{g} (w)$ maps
 $\im w = 0$ into itself. 

 The question what actually
 happens in the 
 case of a Levi-nonflat, but 
 {\em infinite type} hypersurface,
 has long remained open. We say
 that a hypersurface  $M\subset \C^2$ is of 
 infinite type at the 
 point $p$ if 
 the distribution $T^c M$ has 
 an integral submanifold 
 through the point $p$ (see e.g. \cite{ber}). It turns
 out that if $M$ is real-analytic,
 then
 any such integral manifold 
 necessarily is a nonsingular 
 complex curve (contained in $M$). This complex curve is 
 called the complex locus of $(M,p)$.

Contrary to the finite 
type case, 
in the infinite type case, 
there can be {\em divergent} formal 
CR maps. To be exact, the first 
author and Shafikov \cite{divergence} showed that 
{\em  there 
exist Levi-nonflat hypersurfaces 
$M, M^*  \subset \C^2$ with $M$ of 
infinite type at $p$ such that 
$M$ and $M^*$ are formally 
equivalent at $p$, and
such that every 
formal 
equivalence $\hat H \colon (M,p) \to M^*$ diverges.} 
On the 
other hand, the first and 
the second author showed 
\cite{nonanalytic} that { \em there
exist hypersurfaces $M,M^* \subset \C^2$, and a point $p \in M$
of infinite type, such that 
$M$ and $M^*$ are CR equivalent, 
but not holomorphically equivalent.}
These phenomena came as a surprise
to many specialists, and 
made the question of whether 
formal equivalence implies CR
equivalence an interesting one. 

We therefore see that the assertion of 
\autoref{main} cannot be further strengthened. In fact, \autoref{main} gives
 a complete answer to 
 the nature of 
 obstructions to CR equivalence
 of real-analytic 
 hypersurfaces in $\C^2$: 
 they are all purely formal. 

 Let us again 
 emphasize that even though
 we are dealing with 
 real-analytic hypersurfaces, 
 we establish the 
 existence of merely {\em smooth} CR-diffeomorphisms whose Taylor
 series agree with 
 a given formal CR equivalence. 
 The results of \cite{divergence,nonanalytic} mentioned above 
 actually show that 
 this is the best we can hope for. 
 We also emphasize that our result does not require any assumptions besides analyticity of the manifolds. It contrasts with similar results in Dynamical Systems such as Chen-Sternberg theorem \cite{stern2} concerning smooth classification of germs of vector fields at a fixed point, which require some {\it hyperbolicity } assumptions on the linear part of the vector field at the fixed point.

A nice application 
of \autoref{main} was observed 
by Nordine Mir, resolving a
a conjecture (by Mir) stated in 
 \cite{problist}. Before 
 we state this result, let 
 us recall that 
 a power series map
  $\hat H \in \fps{z-p}^N $ is
  algebraic if there
  exist nontrivial polynomials 
  $p_j (z,w)$ such that $p_j(z,\hat H_j(z)) = 0$ for every $j=1,\dots , N$. Every algebraic power
  series map is actually 
  convergent (one can see this from Artin's Approximation Theorem \cite{artin-approx}, see e.g. \cite{rond-survey}). Let us also recall 
  that a real-analytic
  hypersurface $M\subset \C^N$ is said 
  to be real-algebraic if it is  contained in
  the vanishing locus of 
  a nontrivial real polynomial in the 
  underlying real variables 
  in $\R^{2N} = \C^N$.

\begin{theorem}\label{mir}
Let $M,M^*\subset\CC{2}$ be two real-algebraic Levi-nonflat hypersurfaces, and $\widehat H:\,(M,p)\mapsto (M^*,p^*)$ a formal invertible CR-map. Then $\widehat H$ is algebraic, and in particular, $\widehat H$ is convergent. 
\end{theorem}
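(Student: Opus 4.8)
The plan is to combine \autoref{main} with the classical algebraicity theory on the Levi-nondegenerate locus. First I would invoke the second assertion of \autoref{main}: since $M,M^*$ are Levi-nonflat, the formal map $\widehat H$ is realized by a $C^\infty$ CR-diffeomorphism $H=(H_1,H_2)\colon (M,p)\to (M^*,p^*)$ whose Taylor expansion at $p$ equals $\widehat H$. Because $M$ is real-algebraic and Levi-nonflat, its Levi determinant is a nonzero real-algebraic function on $M$, so the Levi-degenerate locus $\Sig\subset M$ is a proper real-algebraic subset and $M\setminus\Sig$ is open, dense, and Levi-nondegenerate (and similarly for $M^*,\Sig^*$). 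As $H$ is a diffeomorphism, it carries a dense open subset of $M\setminus\Sig$ into $M^*\setminus\Sig^*$.

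On $M\setminus\Sig$ the hypersurface is Levi-nondegenerate, hence minimal in $\CC{2}$. There the reflection principle shows the $C^\infty$ CR-map $H$ to be real-analytic, so each $H_j$ is locally the restriction to $M$ of a holomorphic function on a full neighborhood in $\CC{2}$; Webster's algebraicity theorem for biholomorphisms of Levi-nondegenerate real-algebraic hypersurfaces then shows this holomorphic extension to be algebraic. Thus near every point of $M\setminus\Sig$ the component $H_j$ coincides with a branch of a single algebraic function, all of whose branches satisfy one and the same irreducible minimal polynomial $Q_j\in\CC{}[z,w,t]\setminus\{0\}$. That the same polynomial $Q_j$ governs $H_j$ on both components of $M\setminus\Sig$ follows because the local holomorphic extensions are determined by their $C^\infty$ boundary values on the generic hypersurface $M$, so they glue to branches of one algebraic function; consequently $Q_j(z,w,H_j)=0$ holds on each connected component of $M\setminus\Sig$ accumulating at $p$.

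It then remains to pass the identity to the formal jet at the possibly non-minimal point $p$. Because $H_j$ is genuinely $C^\infty$ up to $p$ — which is exactly what \autoref{main} provides — the continuous function $Q_j(z,w,H_j)$ vanishes on the dense set $M\setminus\Sig$ and hence vanishes identically on $M$ near $p$. Expanding in Taylor series at $p$ gives $Q_j(z,w,\widehat H_j)\equiv 0$ modulo the ideal of $M$, i.e. the holomorphic formal series $Q_j(z,w,\widehat H_j)$ lies in the ideal generated by the complexified defining function of $M$. Since $M$ is a generic real hypersurface, no nonzero holomorphic formal power series lies in this ideal, whence $Q_j(z,w,\widehat H_j)=0$ in $\CC{}[[z,w]]$. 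Thus each $\widehat H_j$ is an algebraic power series; such series are convergent, so $\widehat H$ is convergent and algebraic, as claimed.

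I expect the main obstacle to be precisely this last transfer, together with the fact that $p$ may be a nonminimal point, where a priori $\widehat H$ need not even converge — indeed convergence genuinely fails for general real-analytic $M$ by the Divergence Theorem, so it is essential that here $M$ is real-algebraic. Two ingredients make the step go through: the $C^\infty$-regularity of $H$ all the way up to $p$, delivered by \autoref{main}, which lets the polynomial identity survive the limit onto the degenerate locus $\Sig$; and the genericity of $M$, which upgrades the resulting ``identity modulo the ideal'' to a genuine polynomial relation for the holomorphic jet. Additional care is needed to ensure a single polynomial governs both sides of the (possibly separating) locus $\Sig$, and this is handled by the uniqueness of holomorphic extension off the generic hypersurface $M$.
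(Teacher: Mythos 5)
Your argument starts exactly as the paper's does, by invoking the second assertion of \autoref{main} to realize $\widehat H$ as a $C^\infty$ CR-map $H$ (a local diffeomorphism, since $\widehat H$ is formally invertible) whose Taylor expansion at $p$ is $\widehat H$. From there the paper finishes in one line: it cites the theorem of Baouendi--Huang--Rothschild \cite{bhr} on smooth CR mappings between real-algebraic hypersurfaces, which directly gives that $H$ is algebraic (hence holomorphic), so that $\widehat H$, being its Taylor series, is algebraic. You instead re-derive the needed special case of that theorem by hand: reflection principle plus Webster's algebraicity theorem on the Levi-nondegenerate locus $M\setminus\Sigma$, a minimal polynomial $Q_j$ for each component of $H$ there, and a passage of the identity $Q_j(z,w,H_j)=0$ first onto all of $M$ by continuity and then to the formal jet at $p$ using genericity of $M$. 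This is a legitimate alternative route, and it has the virtue of isolating exactly where the $C^\infty$-regularity up to the nonminimal point $p$ (the real content of \autoref{main}) is used; the cost is that you are essentially reproving \cite{bhr} in dimension two, and the one step you gloss over is precisely the step that reference is designed to handle: that a \emph{single} polynomial $Q_j$ governs $H_j$ on all connected components of $M\setminus\Sigma$ near $p$. Your appeal to uniqueness of holomorphic extension off $M$ does not settle this, since the local extensions attached to components separated by $\Sigma$ need not have overlapping domains. The point is repairable --- near $p$ the semialgebraic set $M\setminus\Sigma$ has finitely many connected components, so one may replace $Q_j$ by the product of the finitely many minimal polynomials before degenerating onto $\Sigma$ --- but as written it is not justified. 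The remaining steps (vanishing of the holomorphic formal series $Q_j(z,w,\widehat H_j)$ because $M$ is generic, and convergence of algebraic formal power series) are correct.
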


\begin{proof}[Proof of \autoref{mir}]
By \autoref{main}, 
we can find a CR-diffeomorphism 
 $H\colon (M,p) \to (M^*,p^*)$,  whose Taylor expansion  at $p$ coincides with $\widehat H$.  We 
 can now apply the algebraicity 
 theorem of 
  Baouendi, Huang and Rothschild \cite{bhr} (see also Webster \cite{webster}) to 
 conclude that  
 $H$ is an algebraic map (in particular, it is holomorphic). Since, again, $\widehat H$ is the Taylor expansion of $H$ at $p$, this implies the assertion of the theorem.
\end{proof}

As follows from the above mentioned theorem of Shafikov and the first author, the convergence phenomenon in \autoref{mir} is a specific feature of {\em algebraic} (but not general analytic!) hypersurfaces, similarly to the theorem of Baouendi-Huang-Rothschild. 

Before we describe our approach
to the problem 
in more detail, we will state 
a more technical version 
of \autoref{main} which contains 
considerable additional 
information. We recall that a  formal power series  $$\widehat H(z,w)=\sum_{k,l\geq 0}c_{kl}z^kw^l$$ (centered at the origin) is said to be {\em of the $(r,s)$ multi Gevrey class, $r,s>0$}, if there exist appropriate constants $A,B,C>0$ such that the Taylor coefficients $c_{kl},\,k,l\geq 0$  satisfy the bounds:
\begin{equation}\label{sbounds}
|c_{kl}|\leq A\cdot B^k\cdot C^l\cdot (k!)^r(l!)^s.
\end{equation}
 For the more technical concept of {\em Gevrey asymptotic expansion} we refer to Section 2.6 below.
\begin{theorem}\label{main2}
Let $M,M^*\subset\CC{2}$ be two real-analytic Levi-nonflat hypersurfaces, both 
of infinite type at $0$, and let $\widehat H:\,(M,0)\mapsto (M^*,0)$ be a formal CR equivalence. 
Then there exist a constant $s>0$ and  local holomorphic coordinates $(z,w)$ for $M,M^*$ at $0$,  at which the complex locus of both 
$M$ and $M^*$ is $\{w=0\}$, a disc $\Delta\subset\CC{}$, and
sectors $S^\pm\subset (\CC{},0)$  with vertex at $0$ containing the directions $\RR{\pm}$, respectively, and holomorphic 
maps   $H_{\pm}\colon \Delta\times S^\pm \to \C^2$ such that $\widehat{H}$
is the $(0,s)$ multi Gevrey asymptotic expansion of $H_{\pm}$ and
$H_{\pm}(M\cap (\Delta\times S^\pm)) \subset M^*$; in particular, $H_{\pm}|_M$
defines a CR diffeomorphism $H$ of $M$ onto $M^*$. 

As a consequence, the given formal power series map $\widehat H(z,w)$ belongs to the $(0,s)$ multi Gevrey class, and thus satisfies \eqref{sbounds} with $r=0$.
\end{theorem}

\begin{remark}\label{multisumty}
In fact,  the proof of \autoref{main2} shows that the formal map $\widehat H$ in \autoref{main2} has the {\em multisummability} property (see Section 2.6 for details).
\end{remark}

\begin{remark}\label{optimality}
As follows from the counter-examples given in \cite{divergence,nonanalytic}, the properties of formal CR-maps stated in \autoref{main2} and \autoref{multisumty} are in general optimal and can't be strengthened further.
\end{remark}

\begin{remark}\label{gevreyord}
It can be verified from the proof of \autoref{main2} that, for $m\geq 2$, the opening of the sectors $S^\pm$ in \autoref{main2} can be chosen to be $\frac{\pi}{m-1}$ for a generic hypersurface $M$ under consideration, and the Gevrey order $s$ can be chosen to be $s=\frac{1}{m-1}$. For $m=1$ one can take $s=0$ (i.e., $\widehat H$ is {\em convergent}), as follows from the result of Juhlin and the second author \cite{jl1}.
\end{remark}


We end this 
introduction by giving 
a short guide to the proof of 
\autoref{main}. As discussed above, the main tool of the paper is the multisummability theory from Dynamical Systems, which meets CR-geometry via the recent
CR \,$\lr$\,DS (Cauchy-Riemann manifolds \,$\lr$\,\,Dynamical Systems) technique developed by Shafikov and the first two 
authors in the recent works \cite{divergence,nonminimalODE,nonanalytic,analytic}. In this 
framework, we study maps of CR-submanifolds $M,M^*$ with 
prescribed properties (such as
being of infinite type) through
symmetries of  an associated holomorphic dynamical system
$\mathcal E(M)$.  
The possibility to replace a real-analytic
CR-manifold by a complex dynamical system is based on the
fundamental parallel between CR-geometry and the geometry of
completely integrable PDE systems. This parallel was first observed by E.~Cartan and
 Segre \cite{cartan,segre} 
and was revisited and further developed in a recent series of publications by
Sukhov (\cite{sukhov1,sukhov2}). The ``mediator''
between a CR-manifold and the associated PDE system is the Segre
family of the CR-manifold. Unlike the Levi-nondegenerate setting in the cited work \cite{cartan,segre,sukhov1,sukhov2}, the CR\,-\,DS technique  deals specifically
 with the {\em Levi-degenerate} setting, providing sort of a dictionary between CR-geometry and Dynamical Systems.

For the proof of
\autoref{main2} we need to develop the CR - DS technique further, extending it to the {\em entire} class of real-analytic hypersurfaces in $\CC{2}$. 
In Section 3, 
infinite type hypersurfaces  satisfying a certain  nondegeneracy assumption (generic infinite type case) are studied.   
 To do so,  we follow the approach in \cite{nonminimalODE,analytic} and consider  complex  meromorphic differential equations associated with these hypersurfaces. Any formal map between real hypersurfaces has to transform the associated ODEs into each other, and working out the latter condition gives a certain {\em singular Cauchy problem} for the components of the map. We then apply the multisummability theory for formal power series solutions of nonlinear systems of ODE at an irregular singularity \cite{Brak1,Ram-Sib2}.   First, we show that  the singular Cauchy  problem has  solutions, holomorphic in certain sectorial domains with  Gevrey asymptotic expansion. Second, we show that these solutions have certain uniqueness properties giving the condition $H(M)\subset M^*$ for the arising CR-map defined on $M$.

In Section 4, we have to extend the scheme in Section 3 to the exceptional (non-generic) case. For doing so, we introduce a new tool: {\em associated differential equations of high order}. In turns out that {\em any} Levi-nonflat real-analytic hypersurface $M$ (in particular, a finite type hypersurface!) can be associated, in appropriate local holomorphic coordinates, with a system of singular ODEs of the kind \eqref{assocd}. We  achieve this by a sequence of coordinate changes and appropriate {\em blow-ups} (both in the initial space and in the space of parameters for Segre families). In this regard, the blow-up procedure of Mir and the second author from \cite{lmblowups} is a key tool. The initial formal CR map is shown to be a transformation between the associated systems of singular ODE again. Working out the transformation rule here brings significant new  difficulties, since  we  deal with jet prolongations of arbitrarily high order. After overcoming these difficulties, we are again able to apply the multisummability theory and obtain the desired regularity property for the formal CR map.

\begin{center} \bf Acknowledgements  \end{center}

\medskip

The authors would like to thank Nordine Mir for his valuable remark on the possibility to obtain the assertion of \autoref{mir} from \autoref{main}. 

\section{Preliminaries}

\subsection{Segre varieties.}
Let $M$ be a smooth  real-analytic submanifold in $\cx^{n+k}$ of
CR-dimension $n$ and CR-codimension $k$, $n,k>0$, $0\in M$, and
$U$ a neighbourhood of the origin where $M\cap U$ admits a
real-analytic defining function $\phi(Z,\overline Z)$ with
the property that $\phi (Z,\zeta)$ is a holomorphic function for
for $(Z, \zeta) \in U\times \bar U$. For every
point $\zeta\in U$ we associate its Segre
variety in $U$ by
$$
Q_\zeta= \{Z\in U : \phi(Z,\overline \zeta)=0\}.
$$
Segre varieties depend holomorphically on the variable $\overline
\zeta$, and for small enough neighbourhoods $U$ of $0$, they
are actually holomorphic submanifolds of $U$ of codimension $k$.

One can choose coordinates $Z = (z,w) \in \CC{n} \times \CC{k} $ and
 a neighbourhood
$U={\
U^z}\times U^w\subset \cx^{n}\times \CC{k}$ such that, for any $\zeta\in U,$
$$
  Q_\zeta=\left \{(z,w)\in U^z \times U^w: w = h(z,\overline \zeta)\right\}
$$
is a closed complex analytic graph. $h$ is a holomorphic
function on $U^z \times \bar U$.  The antiholomorphic $(n+k)$-parameter family of complex
submanifolds $\{Q_\zeta\}_{\zeta\in U_1}$ is called  {\em the Segre
family} of $M$ at the origin. The following basic
properties of Segre varieties follow from the definition and the
reality condition on the defining function:
\begin{equation}\label{e.svp} \begin{aligned}
  Z\in Q_\zeta & \Leftrightarrow  \zeta\in Q_Z,
 \\
  Z\in Q_Z & \Leftrightarrow  Z\in M,
\\
 \zeta\in M & \Leftrightarrow \{Z\in U \colon Q_\zeta=Q_Z\}\subset M.
\end{aligned}
\end{equation}

The fundamental role of Segre varieties for holomorphic maps
is due to their {\em invariance property}: If $f: U \to U'$ is a
holomorphic map which sends a smooth real-analytic submanifold
$M\subset U$ into another such submanifold $M'\subset U'$, and $U$
is chosen as above (with the analogous choices and
notations for $M'$), then
$$
 f(Q_Z)\subset Q'_{f(Z)}.
$$
For
more details and other properties of Segre varieties we refer the reader
to
e.g. \cite{webster}, \cite{DiPi},  or \cite{ber}.


The space of Segre varieties $\{Q_Z : Z\in U\}$,
for appropriately chosen $U$, can be
identified with a subset of $\cx^K$ for some $K>0$ in such a way
that the so-called {\em Segre map} $\lambda : Z \to Q_Z$ is
antiholomorphic. This  can be seen from the fact that if we write
\[ h(z,\bar \zeta) = \sum_{\alpha\in\mathbb{N}^n} h_\alpha (\bar \zeta) z^\alpha, \]
then $\lambda (Z)$ can be identified with
$\left(h_\alpha (\bar Z) \right)_{\alpha\in\mathbb{N}^n}$. After that the desired fact follows from the Noetherian property.

If $M$ is a hypersurface,
then its Segre map is one-to-one in a
neighbourhood of every point $p$ where $M$ is Levi nondegenerate.
When such a real hypersurface $M$ contains a complex hypersurface
$X$, for any point $p\in X$ we have $Q_p = X$ and $Q_p\cap
X\neq\emptyset\Leftrightarrow p\in X$, so that the Segre map
$\lambda$ sends the entire $X$ to a unique point in $\CC{N}$ and,
accordingly, $\lambda$ is not even finite-to-one near each $p\in
X$, i.e. $M$ is \it not essentially finite \rm at points $p\in
X$. For the notion of essential finiteness, see e.g. \cite{ber}.

\subsection{Nonminimal real hypersurfaces} 
We recall that given a real-analytic Levi-nonflat hypersurface $M\subset \CC{2}$, for 
every $p\in M$ there exist so-called {\em normal coordinates} $(z,w)$ centered
at $p$, i.e. a local holomorphic coordinate system near $p$ in 
which $p=0$ and near $0$, $M$ is defined by an equation of the form 
\[ v = F (z, \bar z , u)\] 
for some germ $F$ of a holomorphic function on $\CC{3}$ which satisfies 
\[ F(z,0,u) = F(0,\bar z, u) = 0\]
and the 
{\em reality condition} $F(z,\bar z,u) \in \RR{} $ for $(z,u)\in \CC{} \times\RR{}$ close to $0$ (see e.g. \cite{ber}). 

We say that $M$ is {\em nonminimal} at $p$ if there exists a germ of a nontrivial
complex curve $X\subset M$ through $p$. It turns out that in normal coordinates, such 
a curve $X$ is necessarily defined by $w = 0$; 
in particular, any such $X$ is nonsingular.

Thus a Levi-nonflat hypersurface $M$ is nonminimal if and only if with normal coordinates $(z,w)$ and a 
defining function $F$ as above, we have that $F(z,\bar z,0) = 0$, or equivalently, 
if $M$ can defined by an equation of the form 
\begin{equation}\label{mnonminimal}
v=u^m\psi(z,\bar z,u), 
\text{ with } \psi(z,0,u) = \psi(0,\bar z, u)= 0 \text{ and } \psi(z,\bar z,0)\not\equiv 0,
\end{equation}
where $m\geq 1$.
 
It turns out that the integer $m\geq 1$ is independent of the choice
of normal coordinates  (see \cite{meylan}), and actually also 
of the choice of $p\in X$; we refer to $m$ as the 
{\em nonminimality order} of a Levi-nonflat hypersurface $M$ on $X$ (or at $p$) and say that
$M$ is {\em $m$-nonminimal} along $X$ (or at $p$).

Several other variants of defining functions for $M$ are useful. Throughout this paper, we use
the {\em complex defining function} $\Theta$ in which $M$ is defined by 
\[  w = \Theta (z,\bar z,\bar w) ;\]
it is obtained from $F$ by solving the equation 
\[ \frac{w - \bar w}{2i} = F \left(z,\bar z, \frac{w+\bar w}{2} \right) \]
for $w$. The complex defining function satisfies the conditions 
\[ \Theta(z,0,\eta) = \Theta (0, \xi, \eta ) = \tau, \quad \Theta(z,\xi,\bar \Theta (\xi, z, w)) = w. \]
If $M$ is $m$-nonminimal at $p$, then $\Theta (z,\xi,\eta) = \eta \theta (z,\xi,\eta)$ and 
thus $M$ is defined by 
\[ w = \bar w \theta(z,\bar z,\bar w) = \bar w (1 +\bar w^{m-1}\tilde \theta(z,\bar z,\bar w) ), \text{ where } 
\tilde \theta (z,0,\eta) = \tilde \theta (0,\xi, \eta) = 0 \text{ and }  
\tilde\theta (z,\xi,0)\neq 0.\]

The {Segre family} of $M$, where $M$ is given in 
normal coordinates as above, with the complex defining function 
$\Theta\colon U_z\times \bar U_z \times \bar U_w = U_z \times \bar U \to U_w$ consists of the complex hypersurfaces $Q_\zeta \subset U$, defined for  $\zeta \in U$
by 
\[ Q_\zeta = \{(z,w) \colon w = \Theta (z,\bar \zeta)\}. \]
The real line 
\begin{equation}\label{Gamma}
\Gamma = \{(z,w)\in M \colon z = 0 \} = \{(0,u) \in M \colon u\in\RR{}\}\subset M
\end{equation}
has the property that 
\[ Q_{(0,u)} = \{w = u\}, \quad (0,u)\in \Gamma \]
for $u\in\RR{}$, a property which actually is equivalent to the normality of 
the coordinates $(z,w)$. More exactly, for any real-analytic curve 
$\gamma$ through $p$ one can find normal coordinates $(z,w)$ in 
which $\gamma$ corresponds to $\Gamma$ in \eqref{Gamma} (see e.g. \cite{lmblowups}). 

We finally have to point out that  a real-analytic Levi-nonflat hypersurface $M\subset\CC{2}$ can exhibit nonminimal points of two kinds, which can be  referred to as  {\em generic} and {\em exceptional} nonminimal points, respectively. A generic point $p\in M$ is characterized by the condition that {\em the minimality locus $M\setminus X$ of $M$ is Levi-nondegenerate locally near $p$}. At a generic nonminimal point, \eqref{mnonminimal} is supplemented by the condition $\psi_{z\bar z}(0,0,0,)\neq 0.$ In terms of the complex defining function, it gives the following useful representation for $M$:
\begin{equation}\label{admissible}
w=\Theta(z,\bar z,\bar w)=\bar w+\bar w^m\sum_{k,l\geq 1}\Theta_{kl}(\bar w)z^k\bar z^l, \quad \Theta_{11}(0)\neq 0
\end{equation}  
(see, e.g., \cite{nonminimalODE}). 

If, otherwise, the intersection of   the minimal locus $M\setminus X$ of $M$ with any neighborhood of $p$ in $M$ contains Levi-degenerate points, then such a point $p$ is referred to as {\em exceptional}.

\subsection{Real hypersurfaces and second order differential equations.}\label{sub:realhyp2ndorderequ}
To every Levi nondegenerate real hypersurface
$M\subset\CC{N}$  we can associate a system of second order
holomorphic PDEs with $1$ dependent and $N-1$ independent
variables, using the Segre family of the hypersurface. This remarkable construction
 goes back to
E.~Cartan \cite{cartan} and Segre \cite{segre} (see also a remark by Webster \cite{webster}),
and was recently revisited in the work of Sukhov
\cite{sukhov1},\cite{sukhov2} in the nondegenerate setting, and in the work of Shafikov and the first two authors in the degenerate setting (see\cite{divergence},\cite{nonminimalODE},\cite{nonanalytic},\cite{analytic}). 
We  describe this procedure in the case
$N=2$ relevant for our purposes.

Let $M\subset\CC{2}$ be a smooth real-analytic
hypersurface, passing through the origin, and $U = U_z \times U_w$
 a sufficiently small neighborhood of the origin. In this case
we associate a second order holomorphic ODE to $M$, which is uniquely determined by the condition that the equation is satisfied by all the
graphing functions $h(z,\zeta) = w(z)$ of the
Segre family $\{Q_\zeta\}_{\zeta\in U}$ of $M$ in a
neighbourhood of the origin.

More precisely, since $M$ is Levi-nondegenerate
near the origin, the Segre map
$\zeta\lr Q_\zeta$ is injective and the Segre family has
the so-called transversality property: if two distinct Segre
varieties intersect at a point $q\in U$, then their intersection
at $q$ is transverse. Thus, $\{Q_\zeta\}_{\zeta\in U}$ is a
2-parameter  family of holomorphic
curves in $U$ with the transversality property, depending
holomorphically on $\bar\zeta$. It follows from
the holomorphic version of the fundamental ODE theorem (see, e.g.,
\cite{ilyashenko}) that there exists a unique second order
holomorphic ODE $w''=\Phi(z,w,w')$, satisfied by all the graphing functions of
$\{Q_\zeta\}_{\zeta\in U}$.

To be more explicit we consider the complex defining equation  $w=\rho(z,\bar z,\bar w)$, as introduced above. 
 The Segre
variety $Q_\zeta$ of a point $\zeta=(a,b)\in U$ is  now given
as the graph
\begin{equation} \label{segre0}w (z)=\rho(z,\bar a,\bar b). \end{equation}
Differentiating \eqref{segre0} once, we obtain
\begin{equation}\label{segreder} w'=\rho_z(z,\bar a,\bar b). \end{equation}
Considering \eqref{segre0} and \eqref{segreder}  as a holomorphic
system of equations with the unknowns $\bar a,\bar b$, an
application of the implicit function theorem yields holomorphic functions
 $A, B$ such that
$$
\bar a=A(z,w,w'),\,\bar b=B(z,w,w').
$$
The implicit function theorem applies here because the
Jacobian of the system coincides with the Levi determinant of $M$
for $(z,w)\in M$ (\cite{ber}). Differentiating \eqref{segreder} once more
and substituting for $\bar a,\bar b$ finally
yields
\begin{equation}\label{segreder2}
w''=\rho_{zz}(z,A(z,w,w'),B(z,w,w'))=:\Phi(z,w,w').
\end{equation}
Now \eqref{segreder2} is the desired holomorphic second order ODE
$\mathcal E = \mathcal{E}(M) $.

More generally, the association of   a completely integrable PDE  with
a CR-manifold is possible for a wide range of
CR-submanifolds (see \cite{sukhov1,sukhov2}). The
correspondence $M\lr \mathcal E(M)$ has the following fundamental
properties:

\begin{enumerate}

\item[(1)] Every local holomorphic equivalence $F:\, (M,0)\lr (M',0)$
between CR-submanifolds is an equivalence between the
corresponding PDE systems $\mathcal E(M),\mathcal E(M')$ (see \autoref{sub:equiv2ndorder});

\medskip

\item[(2)] The complexification of the infinitesimal automorphism algebra
$\mathfrak{hol}^\omega(M,0)$ of $M$ at the origin coincides with the Lie
symmetry algebra  of the associated PDE system $\mathcal E(M)$
(see, e.g., \cite{olver} for the details of the concept).

\end{enumerate}

Even though for a real hypersurface
$M\subset\CC{2}$ which is nonminimal at the origin there is no a priori way to associate
to $M$ a second order ODE or even a more general PDE system near
the origin,  the Shafikov and the first author
found an injective correspondence between  nonminimal hypersurfaces $M\subset\CC{2}$ which are spherical outside the complex locus hypersurfaces
 and certain {\em singular} complex ODEs $\mathcal E(M)$ with an
isolated  singularity at the origin in in \cite{nonminimalODE}. It is possible to extend this construction to the non-spherical case, which we do in Section 3.



\subsection{Equivalences and symmetries of  ODEs}\label{sub:equiv2ndorder}
We start with a description of the jet prolongation approach to
the equivalence problem (which is a simple interpretation of a
more general approach in the context of {\em jet bundles}). We refer to the excellent sources \cite{olver}, \cite{bluman} for more details and collect the necessary 
prerequisites here.
In what follows all variables are assumed to be complex, all
mappings biholomorphic, and all ODEs to be defined near their zero
solution $y(x)=0$.

Consider two ODEs,  $\mathcal E$ given by $y^{(k)}=\Phi(x,y,y',...,y^{(k-1)} )$
and
$\tilde{\mathcal E}$ given by $ y^{(k)}=\tilde\Phi(x,y,y',...,y^{(k-1)})$, where the functions
$\Phi$ and $\tilde\Phi$ are holomorphic in some neighbourhood of the
origin in $\CC{k+1}$. We say that a germ  of a biholomorphism
$H \colon (\CC{2},0)\lr(\CC{2},0)$  transforms $\mathcal E$ into
$\tilde{\mathcal E}$, if it sends (locally) graphs of solutions of
$\mathcal E$ into graphs of solutions of $\tilde{\mathcal E}$.
We define the {\em $k$-jet space} $J^k(\CC{},\CC{})$ to be the $(k+2)$-dimensional
 linear space with coordinates $x,y,y_1,...,y_{k}$,
which correspond to the independent variable $x$, the dependent
variable $y$ and its derivatives up to order $k$, so that we can
naturally consider $\mathcal E$ and $\tilde{\mathcal E}$ as complex
submanifolds of $J^k(\CC{},\CC{})$.

For any biholomorphism $H$ as above one may consider its
{\em $k$-jet prolongation} $H^{(k)}$, which is defined on a neighbourhood of the origin in $\CC{k+2}$ as follows.
The first two components of the mapping $H^{(k)}$
coincide with those of $H$. To obtain the remaining components  we
denote the coordinates in the preimage by $(x,y)$ and in the
target domain by $(X,Y)$. Then the derivative $\frac{dY}{dX}$ can
be symbolically recalculated, using the chain rule, in terms of
$x,y,y'$, so that the third coordinate $Y_1$ in the target jet
space becomes a function of $x,y,y_1$. In the same manner one
obtains the remaining components of the prolongation of the
mapping $H$. Thus, for differential equations of order $k$, {\em a mapping $H$ transforms the ODE $\mathcal E$ into $\tilde{\mathcal
E}$ if and only if the prolonged mapping $H^{(k)}$ transforms
$(\mathcal E,0)$ into $(\tilde{\mathcal E},0)$ as submanifolds in the
jet space $J^k(\CC{},\CC{})$}. A similar statement can be formulated for systems of differential equations, as well as for
certain singular differential equations, for example, the ones considered in the next subsection. 

Some further details and properties of the jet prolongations  $H^{(k)}$ are given in Section 4.




 \smallskip
 

\subsection{Tangential sectorial domains and smooth CR-mappings}
Let $M\subset\CC{2}$ be a real-analytic  Levi nonflat hypersurface, which is  nonminimal at a point $p\in M$, and $X\ni p$ its complex locus.  We choose for $M$ local holomorphic coordinates \eqref{mnonminimal} so that $p=0,\,X=\{w=0\}$.
We next recall the following definition (see \cite[Section 1.2]{analytic}).
\begin{definition}\label{tangential} A set $D_p\subset\CC{2},\,D_p\ni p$ is called a \em tangential sectorial domain for $M$ at $p$  \rm if, in some   local holomorphic coordinates   $(z,w)$ for $M$ as above, the set $D_p$ looks as 
\begin{equation}\label{standard}
\Delta\times \Bigl(S^+\cup\{0\}\cup S^-\Bigr).
\end{equation}
Here $\Delta\subset\CC{}$ is a disc of radius $r>0$, centered at the origin, and $S^\pm\subset\CC{}$ are sectors
\begin{equation}\label{sectors}
S^+=\bigl\{|w|<R,\, \alpha^+<\mbox{arg}\,w<\beta^+\bigr\},\quad S^-=\bigl\{|w|<R,\, \alpha^-<\mbox{arg}\,w<\beta^-\bigr\}
\end{equation}
\noindent for appropriate  $R>0$ and such that $S^{\pm}$ contains the direction $\R^{\pm}$.
We also denote by $D^\pm_p$ the domains $\Delta\times S^\pm\subset\CC{2}$ respectively.
\end{definition} 
As discussed in \cite{analytic}, 
{\em for any tangential sectorial domain $D_p$ for $M$ at $p$,  the intersection of $M$ with a sufficiently small neighborhood $U_p$ of $p$ in $\CC{2}$ is contained in $D_p$}.

\smallskip

Next, we recall the following classical notion.
\begin{definition}\label{poincare}
Let $f(w)$ be a function holomorphic in a sector $S\subset\CC{}$. We say that a formal power series $\hat f(w)=\sum_{j\geq 0}c_jz^j$ is the {\em Poincar\'e asymptotic expansion of $f$ in $S$}, if for any $n\geq 0$ we have:
$$\frac{1}{w^n}\left(f(w)-\sum_{j=0}^n c_jw^j\right)\rightarrow 0\quad\mbox{when}\quad w\rightarrow 0,\,\,w\in S.$$ 
In the latter case, we write: $f(w)\sim \hat f(w)$.
\end{definition}
For basic properties of the asymptotic expansion we refer to \cite{vazow}. In particular, we recall that asymptotic expansion in a full punctured neighborhood of a point means the usual holomorphicity of a function. 

The notion of Poincar\'e asymptotic expansion can be naturally extended to function holomorphic in products of sectors and the respective formal power series in several variables. This allows us to formulate the following 
\begin{definition}\label{asymptotic}
 We say that 
 a $C^\infty$ CR-function $f$ in a neighborhood of $p$ in $M$ is {\em sectorially extendable,} if for some (and then any sufficiently small) tangential sectorial domain $D_p$ for $M$ at $p$, there exist functions $f^\pm\in\mathcal O(D^\pm_p)$ such that 
 
 \smallskip
 
 $(i)$ each $f^\pm$ coincides with $f$ on $D^\pm_p\cap M$, and 
 
 \smallskip
 
$ (ii)$ both $f^\pm$ admit the same  Poincar\'e asymptotic representation
 $$ f^\pm\sim \sum_{k,l\geq 0}a_{kl}z^kw^l \in \fps{z,w}$$
 in the respective domains $D^\pm_p$. 
  \end{definition}
 
 \smallskip

 We can similarly define the sectorial extendability of CR-mappings or infinitesimal CR-automorphisms of real-analytic hypersurfaces. Crucially, it is not difficult to see (as discussed in \cite{analytic}) that {\em restricting two holomorphic functions $f^\pm$, as in \autoref{asymptotic}, onto a nonminimal hypersurface $M$ as above defines a $C^\infty$ CR-function on $M$ near $0$, sectorially extendable into the initial tangential sectorial domain.} This observation will be the final ingredient for the proof of \autoref{main}.
    
\subsection{Summability of formal power series}
In this section, we shall recall some known facts about multisummability of formal power series and we shall recall a key theorem due to Braaksma that says that any formal solution of a system of nonlinear differential equations at an irregular singularity is multisummable in any direction but a finite number of them. This means there are holomorphic solutions in some sectors with vertex at the singularity and having the formal solution as asymptotic power series. This has a long although recent history and we refer to \cite{ramis-ksum, ramis-panorama, balser-book, sibuya-textbook, ramis-stolo-cours} for more information.

\begin{definition}
Let $s>0$. A formal power series $\hat f=\sum_{n\geq 0}f_n z^n$ is said to be a {\em Gevrey series of order $s$} if there exist $A,B>0$ suh that $|f_n|\leq AB^n\Gamma(1+sn)$ for all $n$. The space of such power series is denoted by $\fps{z}_s$.
\end{definition}
In other words, we have $|f_n|\leq \tilde A\tilde B^n(n!)^s$ for some appropriate constants.
Let $I=]a,b[$ be an open interval of $\R$ and let $r>0$. We denote by ${\mathcal S}_r(I)$ the open sector of $\C$(or the Riemann surface of the Logarithm)~:
$$
{\mathcal S}_r(I):=\{z\in \C|\quad a<\arg z <b, \quad 0<|z|<r\}.
$$
\begin{definition}\label{gevreyexp}
A holomorphic function $f\in {\mathcal O}({\mathcal S}_r(I))$ is said to have an {\em $s$-Gevrey asymptotic expansion} at $0$ if there exists a formal power series $\hat f=\sum_{j\geq 0}f_j z^j$ such that, for all $I'\subset\subset I$, there exist $C>0$ and $0<r'\leq r$ such that for all integer $n>0$
$$
\left|f(z) -\sum_{k=0}^{n-1}f_j z^j \right|\leq C^n\Gamma(1+sn)|z|^n,\quad \forall z\in {\mathcal S}_{r'}(I').
$$
We shall write $f\sim_s \hat f$. The space of these functions will be denoted by ${\mathcal A}_s(I)$.
\end{definition}
Note that the above Gevrey asymptotic property strengthens the Poincar\'e asymptotic property introduced in the previous section.  We also remark that asymptotic series $\hat f$ of such a function belongs to $\fps{z}_s$.
\begin{definition}
Let $k$ be a positive real number. A formal power series $\hat f\in \fps{z}_{\frac{1}{k}}$ is said to be {\em $k$-summable} in the direction ${d}$ if there exists a sector ${\mathcal S}_r(I)$, bissected by ${d}$ and of opening $|I|>\frac{\pi}{k}$, and a holomorphic function $f\in {\mathcal O}({\mathcal S}_r(I))$ such that $f\sim_{\frac{1}{k}} \hat f$. We also say that $\hat f$ is $k$-summable on $I$.
\end{definition}
Such a holomorphic function $f$ is unique (this is a consequence of Watson Lemma \cite{malgrange-cours}) and called the { \em $k$-sum of $\hat f$}. We emphasize that a {\em $k$-summable power series is $\frac{1}{k}$-Gevrey}. In order to describe the properties of solutions of differential equations with irregular singularity, we need the more general notion of multi-summability. 
\begin{definition}
Let $r\geq 1$ be an integer and let ${\bf k}:=(k_1,\ldots,k_r)\in (\R)^r$ with $0<k_1<\cdots<k_r$. For any $1\leq j\leq r$, let $I_j:=]a_j,b_j[$ be an open interval of length $|I_j|=b_j-a_j>\frac{\pi}{k_j}$ such that $I_j\subset I_{j-1}$, $2\leq j\leq r$. A formal power series $\hat f\in \fps{z}$ is said to be {\em ${\bf k}$-multisummable on ${\bf I}=(I_1,\ldots,I_r)$} if there exist formal power series $\hat f_j$ such that $\hat f:= \sum_{j=1}^r \hat f_j$ and such that each $\hat f_j$ is $k_j$-summable on $I_j$ with sum $f_j$, $1\leq j\leq r$. We shall also say that {\em $\hat f$ is ${\bf k}$-multisummable in the multidirection ${\bf d}=(d_1,\ldots, d_r)$} where $d_j$ bissects the sector $\{a_j<\arg z<b_j\}$. 
\end{definition}
In that case, we say that {\em ${\bf f}=(f_1,\ldots,f_r)$ is the multisum of $\hat f$}. Such a multisum is unique according the relative Watson lemma \cite{malgrange-cours}[Th\'eor\`eme 2.2.1.1]. From it, one can build the (unique) ${\bf k}$-sum of $\hat f$ on {\bf I}, denoted by ${\bf f_{k,I}}$, that satisfies ${\bf f_{k,I}}\sim_{\frac{1}{k_1}} \hat f$ on $I_1$\cite{Brak1}[p.524]. Here we have  used the definition of W. Balser \cite{balser-multi} but there are other equivalent definitions due to Ecalle\cite{EcalleIII,Ram-Mart-acc} and Malgrange-Ramis\cite{Malg-Ram}.

Next, we shall emphasize the folowing important property:
\begin{proposition}\cite[Proposition 3.2,p. 358]{Malg-Ram}, \cite[Th\'eor\`eme 2.2.3.1]{malgrange-cours}\label{compo-multi}
Let $\Phi$ be a germ of holomorphic function at $0$ of $\C^{p+1}$. Let  $\hat f_i\in \fps{z}$ be a formal power series such that $\hat f_i(0)=0$, $i=1,\ldots, p$.
Assume that $\hat f_i$ is ${\bf k}$-multisummable on ${\bf I}=(I_1,\ldots,I_r)$ with multisum  ${\bf f_i}=(f_{i,1},\ldots,f_{i,r})$. Then, $\Phi(z,\hat f_1(z),\ldots, \hat f_p(z))$ is also ${\bf k}$-multisummable on ${\bf I}=(I_1,\ldots,I_r)$ with multisum $\Phi(z,{\bf f})=(\Phi(z,f_{1,1},\ldots,f_{p,1} ),\ldots,\Phi(z,f_{1,r},\ldots,f_{p,r}))$.
\end{proposition}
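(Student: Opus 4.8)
The plan is to prove the statement in two movements: first to establish that, for a fixed multi-level $\mathbf k$ and a fixed nested multi-interval $\mathbf I$, the set of $\mathbf k$-multisummable series on $\mathbf I$ is a $\mathbb C$-algebra on which multisummation is a homomorphism, so that the claim holds verbatim when $\Phi$ is a polynomial; then to pass from polynomials to the convergent germ $\Phi$ by an absolutely convergent series argument. For the algebraic core, recall that for ordinary $k$-summability the space of functions admitting a $\tfrac1k$-Gevrey asymptotic expansion on a fixed sector of opening $>\pi/k$ is stable under sums (trivially) and under products (a direct Cauchy-product estimate on the remainders, in which the Gevrey constants of the product are controlled by those of the factors); by Watson's lemma the sum is unique, so the sum operator is an injective algebra homomorphism. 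Because $\mathbf k$-multisummability is defined through a finite decomposition $\hat f=\sum_{j=1}^r\hat f_j$ into $k_j$-summable pieces, the one genuinely non-formal algebraic point is that a product of two multisummable series is again multisummable: the cross terms $\hat f_j\hat g_{j'}$ with $j\neq j'$ must be redistributed among the levels, which follows from the stability of each level under the iterated acceleration operators (equivalently, from the Ramis--Sibuya characterization applied level by level). Granting this, any polynomial $P(z,y_1,\dots,y_p)$ applied to $\hat f_1,\dots,\hat f_p$ is $\mathbf k$-multisummable with multisum $P(z,\mathbf f_1,\dots,\mathbf f_p)$, by the homomorphism property used finitely often.

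Next I would upgrade from polynomials to the convergent germ $\Phi$. Its Taylor coefficients obey geometric bounds $|c_\alpha|\le C\rho^{-|\alpha|}$ for some $\rho>0$. Let $\mathcal F_i$ denote the canonical $\mathbf k$-sums of the $\hat f_i$ on a narrow sector $\mathcal S$ bisected by the chosen multidirection. Since $\hat f_i(0)=0$, each $\mathcal F_i$ tends to $0$ as $z\to0$ in $\mathcal S$, so on a sufficiently small subsector the point $(z,\mathcal F_1(z),\dots,\mathcal F_p(z))$ lies in the polydisc of convergence of $\Phi$. Writing $\Phi=\lim_N\Phi_N$ for its Taylor partial sums, the functions $\Phi_N(z,\mathcal F_1,\dots,\mathcal F_p)$ are, by the polynomial case, the canonical sums of $\Phi_N(z,\hat f_1,\dots,\hat f_p)$, and they converge locally uniformly near the vertex to $\Phi(z,\mathcal F_1,\dots,\mathcal F_p)$.

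The main obstacle is to show that this uniform limit is \emph{genuinely} $\mathbf k$-multisummable, and not merely Gevrey-asymptotic: a sector opening exceeding $\pi/k_1$ and the full nested multi-level structure must survive the passage to the limit, whereas the Gevrey constants produced by the successive products in $\Phi_N$ a priori deteriorate with $N$. I would resolve this by equipping the space of $\mathbf k$-sums on a fixed subsector with a submultiplicative Gevrey (Nagumo-type) norm $\|\cdot\|$ encoding both the sup-size and the Gevrey asymptotic bounds, and compatible with the multi-level acceleration structure, so that this space becomes a Banach algebra. Setting $M=\max_i\|\mathcal F_i\|$, the series $\sum_\alpha c_\alpha z^{\alpha_0}\mathcal F_1^{\alpha_1}\cdots\mathcal F_p^{\alpha_p}$ then converges absolutely once $M<\rho$ on a small enough subsector, since $\sum_\alpha|c_\alpha|M^{|\alpha|}<\infty$; its sum lies in the Banach algebra, hence is $\mathbf k$-multisummable, with $\tfrac1{k_1}$-Gevrey expansion $\Phi(z,\hat f_1,\dots,\hat f_p)$. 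Uniqueness of the multisum (the relative Watson lemma) finally identifies its multisum with $\Phi(z,\mathbf f)$, componentwise as stated. Constructing such a submultiplicative norm that is genuinely compatible with the multisummability structure is the technical heart of the argument, and is where I would expect the bulk of the work to lie.
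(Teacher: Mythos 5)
First, a point of comparison: the paper does not prove this proposition at all --- it is imported verbatim from \cite{Malg-Ram} and \cite{malgrange-cours}, so there is no internal argument to measure your proposal against. Judged on its own, your two-stage plan (algebra homomorphism property for polynomial $\Phi$, then a Banach-algebra limit argument for convergent germs) is the standard route taken in those references, and nothing in it is wrongheaded. But as written it is a roadmap rather than a proof, because the two places where you explicitly defer the work are exactly the places where all the content of the theorem lives.

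The first deferred point is the one I would press you on. Under the definition the paper actually adopts (Balser's: $\hat f$ is $\mathbf k$-multisummable iff $\hat f=\sum_j\hat f_j$ with $\hat f_j$ being $k_j$-summable on $I_j$), the cross term $\hat f_j\hat g_{j'}$ with $j\neq j'$ is a product of two series summable at \emph{different} single levels, and there is no direct way to re-split such a product into single-level summable pieces; ``stability of each level under the iterated acceleration operators'' is not something you can even formulate from the decomposition definition alone. What is really needed here is the equivalence of Balser's definition with the acceleration definition of \cite{Ram-Mart-acc,balser-multi} or the quasi-function definition of \cite{Malg-Ram}, under which the multisummable series visibly form a differential algebra; that equivalence is itself a nontrivial theorem, and invoking it essentially amounts to invoking the result you are trying to prove. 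The second deferred point --- a submultiplicative Gevrey norm making the space of $\mathbf k$-sums on a fixed proper subsector a Banach algebra, uniformly in the level structure --- is constructible (Malgrange does precisely this with the sheaf of multisummable functions and Nagumo-type seminorms in \cite{malgrange-cours}), but since you acknowledge it is ``the technical heart'' and leave it unbuilt, the passage from $\Phi_N$ to $\Phi$ is not yet justified: uniform convergence of the sums alone does not preserve multisummability, as you correctly observe. In short: right skeleton, but both load-bearing bones are missing, and the first one cannot be supplied without changing the working definition of multisummability.
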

In particular, we conclude that the class of multisummable functions forms an algebra and is closed under the division operation, provided the denominator has no constant terms in its expansion.

The reason for introducing these notions is that these are the natural spaces to which solutions of nonlinear differential equations with irregular singularity must belong. 

Let $r\in \N$, $k_j\in \N$, $j=1,\ldots ,r$, $0<k_1<\ldots <k_r$. We set ${\bf k}:=(k_1,\ldots,k_r)$. Let ${\bf I}=(I_1,\dots ,I_r)$ where $I_j=]\alpha_j,\beta_j[$ is an open interval with $\beta_j -\alpha_j>\pi/k_j$. We also assume that $I_j\subset I_{j-1}, j=1,\ldots ,r$ where $I_0=\R$.
Consider
\begin{equation}\label{mr}
\text{diag}\{x^{k_1} I^{(1)},\ldots ,x^{k_r} I^{(r)}\}x\frac{dy}{dx}=\Lambda y +xg(x,y)
\end{equation}
where $I^{(j)}$ denotes the identity matrix of dimension $n_j\in \N$ and $n=n_1+\ldots n_r$, $y\in \C^n$, $\Lambda=\text{diag}\{\lambda_1,\ldots, \lambda_n\}$, $\Lambda$ is invertible and $g$ is a {\bf k}-sum of some $\hat g(x,y)\in \C[[x,y]]$ on {\bf I} uniformly in a neighborhood of $0\in \C^n$ ($g$ analytic at $(0,0)$ in $\C \times \C^n$ is a special case). Let $\hat{y}=\sum_{h=1}^\infty c_hx^h$ be a formal solution of (\ref{mr}). This means that 
$$
\text{diag}\{x^{k_1} I^{(1)},\ldots ,x^{k_r} I^{(r)}\}x\frac{d \hat y(x)}{dx}=\Lambda y +x\hat g(x,\hat y(x)).
$$
Then the following holds (cf. 
\cite{Ram-Sib2,Brak1,BBRS})
 \begin{theorem}\label{final-multisum}\cite{braaksma-banach}
The formal solution $\hat{y}$ of (\ref{mr}) is {\bf k}-multisummable on ${\bf I}=(I_1,\dots ,I_r)$
 if $\arg\lambda_h\not\in ]\alpha_j +\pi/(2k_j),\beta_j -\pi/(2k_j)[$ for all 
$h\in [n_1+\ldots +n_{j-1}+1,n_1+\ldots +n_j]$, $1\leq j\leq r$.
\end{theorem}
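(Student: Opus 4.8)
Since this is the classical multisummability theorem of Braaksma (building on Ramis--Sibuya), the plan is to reproduce the Borel--Laplace proof equipped with \'Ecalle's acceleration operators, in which the hypothesis on $\arg\lambda_h$ is exactly the condition singling out non-singular directions. First I would record that the formal solution is forced to be Gevrey of order $1/k_1$: writing $\hat y=\sum_{h\ge 1}c_h x^h$ and substituting into \eqref{mr}, the recursion for the $c_h$ is governed by the invertible diagonal $\Lambda$ on the left, while the singular derivative $\mathrm{diag}\{x^{k_j}I^{(j)}\}x\tfrac{d}{dx}$ forces factorial growth of order $(h!)^{1/k_1}$ in the lowest block; a standard majorant argument then gives $\hat y\in\mathbb C[[x]]_{1/k_1}^{\,n}$. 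Consequently the formal Borel transform $\hat\varphi:=\mathcal B_{k_1}\hat y$ of order $k_1$ converges near the origin of the Borel plane.

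Next I would transform \eqref{mr} into a convolution equation for $\varphi$. Under $\mathcal B_{k_1}$ the singular differential operator on the left becomes an algebraic operator in the Borel variable $\zeta$, the analytic nonlinearity $xg(x,y)$ becomes a convergent series of convolution products, and solving for $\varphi$ reduces to inverting the symbol of the linear part (whose vanishing locus $\{\zeta^{k_1}=\lambda_h\}$ pins down the singular rays) and iterating against the nonlinearity. The central analytic step is then to solve this convolution equation by a fixed-point argument in a Banach space of functions that are holomorphic near $0$, extend analytically along the ray $d_1$ bisecting $I_1$, and have suitable exponential growth there. Here the hypothesis enters decisively: the singularities of $\varphi$ lie on the rays determined by the eigenvalues $\lambda_h$ of the lowest block, and the condition $\arg\lambda_h\notin\,]\alpha_1+\pi/(2k_1),\beta_1-\pi/(2k_1)[$ guarantees that $d_1$, together with a full sector of opening exceeding $\pi/k_1$ around it, avoids these singular rays, so that the continuation exists and the required growth bounds hold.

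To pass through the successive levels $k_1<\dots<k_r$ I would apply \'Ecalle's acceleration operators $\mathcal A_{k_j,k_{j+1}}$: each is an integral transform whose kernel has the exponential decay making the composite well defined on the cone of directions selected by the nested intervals $I_r\subset\dots\subset I_1$, and the eigenvalue condition for the block of level $k_j$ ensures that no singular direction obstructs the $j$-th acceleration. A final Laplace transform of order $k_r$ along $d_r$ then produces a genuine holomorphic function, asymptotic to $\hat y$, on a sector of opening exceeding $\pi/k_r$ bisected by $d_r$; unwinding the accelerations exhibits $\hat y$ as a sum $\sum_j\hat y_j$ with each $\hat y_j$ being $k_j$-summable on $I_j$, which is precisely $\mathbf k$-multisummability. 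Uniqueness of the resulting multisum follows from the relative Watson lemma already cited.

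The main obstacle is the fixed-point step in the Borel plane: one must control the analytic continuation of the \emph{convolution products} coming from the nonlinearity along the ray $d_1$, proving simultaneously that they develop no singularities in the admissible directions and that they satisfy uniform exponential bounds strong enough for the subsequent accelerations to converge. This is the technical heart of Braaksma's argument, where the Banach-space norms encoding growth along rays must be combined with the direction hypothesis; the multi-level bookkeeping, tracking which eigenvalue $\lambda_h$ belongs to which level $k_j$, is what upgrades the single-level estimates to the full multisummability conclusion.
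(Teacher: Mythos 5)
There is nothing in the paper to compare your argument against: \autoref{final-multisum} is imported verbatim from Braaksma \cite{braaksma-banach} (see also \cite{Brak1,Ram-Sib2,BBRS}), and the authors give no proof of it --- it is one of the external analytic inputs on which the whole CR argument rests. So your proposal has to be judged on its own terms, as a reconstruction of Braaksma's proof.

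As an outline it is faithful to the cited source: the $1/k_1$-Gevrey estimate for $\hat y$ obtained from the coefficient recursion with invertible $\Lambda$, the passage to a convolution equation under the formal Borel transform of the lowest level, the fixed point in a Banach space of functions with controlled exponential growth along rays avoiding the singular locus $\{\zeta^{k_1}=\lambda_h\}$, the iterated accelerations $\mathcal A_{k_j,k_{j+1}}$ through the nested intervals, and the final Laplace transform of level $k_r$ are exactly the skeleton of Braaksma's argument, and your reading of the hypothesis on $\arg\lambda_h$ as the non-Stokes condition for each level is the intended one. But as a proof it is incomplete, and you say so yourself: the analytic continuation of the convolution products along the admissible rays without creating new singularities, the uniform exponential bounds strong enough for the acceleration kernels to converge, and the level-by-level assignment of each $\lambda_h$ are precisely the content of the theorem, and none of them is carried out. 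In particular the decomposition $\hat y=\sum_j\hat y_j$ with each $\hat y_j$ being $k_j$-summable on $I_j$ --- which is what the definition of $\mathbf k$-multisummability used in Section~2 actually demands --- is asserted to ``unwind'' from the accelerations but is never constructed. For the purposes of this paper the honest course is the one the authors take, namely to cite Braaksma; if you want to supply a self-contained proof, the fixed-point step in the Borel plane is where essentially all of the work lies.
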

\begin{corollary}\cite{Brak1}[Corollary p.525]\label{general-multisum}
Consider an analytic nonlinear differential equation of the form
\begin{equation}
z^{\nu+1}\frac{d y}{dz}=F(z,y)
\label{irreg-sing}
\end{equation}
where $z\in \C$, $y\in \C^n$, and $F$ is analytic in a neighborhood of the the origin in $\C\times \C^n$, $\nu>0$. Then, there exist   positive integers $q$ and $0<k_1<\ldots <k_r$ such that  every formal power series solution $\hat y$ of (\ref{irreg-sing}) is $(\frac{k_1}{q},\ldots, \frac{k_r}{q})$-multisummable.
\end{corollary}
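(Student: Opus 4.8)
The plan is to deduce the statement from Theorem~\ref{final-multisum} by reducing the general equation \eqref{irreg-sing} to the prepared normal form \eqref{mr}. Since evaluating $z^{\nu+1}\hat y'=F(z,\hat y)$ at $z=0$ forces $F(0,\hat y(0))=0$, I would first translate by the constant vector $y_0=\hat y(0)$, so that after replacing $y$ by $y+y_0$ we may assume $\hat y(0)=0$ and $F(0,0)=0$. Writing $F(z,y)=f(z)+A(z)y+O(|y|^2)$ with $f(z)=F(z,0)$ and $A(z)=\partial_yF(z,0)$ both analytic at $0$, the equation takes the form $z^{\nu+1}y'=A(z)y+f(z)+O(|y|^2)$, whose linear part $z^{\nu+1}y'=A(z)y$ carries all the information about the singularity. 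Note that only the constant $y_0$ is subtracted here, not the full (possibly divergent) series $\hat y$, so the coefficients stay analytic and no circularity is introduced.

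Next I would invoke the formal classification of the meromorphic linear system $z^{\nu+1}y'=A(z)y$. By the Newton polygon / Hukuhara--Turrittin--Levelt theory one can, after a ramification $z=x^{q}$ for a suitable positive integer $q$ and a formal meromorphic gauge transformation, split the linear part into blocks indexed by the distinct slopes $0<k_1<\cdots<k_r$ of the polygon, each block being normalized so that its leading coefficient $\Lambda_j$ is invertible. Collecting the blocks, the rescaled system acquires precisely the left-hand side $\mathrm{diag}\{x^{k_1}I^{(1)},\dots,x^{k_r}I^{(r)}\}\,x\frac{dy}{dx}$ and an invertible diagonal $\Lambda$ on the right, while the inhomogeneity $f$ and the quadratic-and-higher terms in $y$ are absorbed into a remainder of the shape $xg(x,y)$. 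This puts the equation into the form \eqref{mr}, and the formal solution $\hat y-y_0$ is carried to a formal solution $\hat Y$ of it.

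With the system in the form \eqref{mr}, I would apply Theorem~\ref{final-multisum} directly: one chooses nested intervals $I_1\supset\cdots\supset I_r$ with $|I_j|>\pi/k_j$ whose directions avoid the finitely many values for which some $\arg\lambda_h$ falls into the forbidden arc $]\alpha_j+\pi/(2k_j),\beta_j-\pi/(2k_j)[$; for all remaining multidirections the hypotheses of the theorem hold, and $\hat Y$ is $\mathbf k$-multisummable on $\mathbf I$. Finally I would transport multisummability back to $\hat y$: the gauge and shearing transformations used in the reduction are themselves multisummable (and meromorphic), so by the composition stability of Proposition~\ref{compo-multi} the inverse substitution preserves the class, and undoing the ramification $z=x^{q}$ rescales the levels from $(k_1,\dots,k_r)$ to $(k_1/q,\dots,k_r/q)$, yielding the asserted conclusion for $\hat y$.

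The main obstacle is the formal normalization step: producing the block decomposition with invertible leading coefficients and distinct slopes $k_j$ (the Turrittin-type reduction) while simultaneously controlling the nonlinear remainder so that it stays in the admissible class $xg(x,y)$ with $g$ a $\mathbf k$-sum of some $\hat g$. One must also verify that the transformations performing this reduction lie in the multisummable class, since this is exactly what legitimizes transporting the conclusion of Theorem~\ref{final-multisum} back to the original coordinates via Proposition~\ref{compo-multi}.
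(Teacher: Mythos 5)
Your outline is correct and follows exactly the route the paper itself indicates: the paper does not prove this corollary but cites Braaksma \cite{Brak1}, remarking only that by \cite{Brak2} an analytic transformation together with a ramification $x=z^{1/q}$ brings \eqref{irreg-sing} into the prepared form \eqref{mr}, after which Theorem~\ref{final-multisum} applies and the levels rescale to $(k_1/q,\ldots,k_r/q)$. The step you flag as the main obstacle --- the Turrittin-type block splitting with invertible $\Lambda$ and control of the nonlinear remainder --- is precisely what is delegated to Braaksma's linear theory, so your plan matches the intended argument.
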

As shown in \cite{Brak2}[p.60], there exists an analytic transformation and a ramification $x=z^{1/q}$ which transforms (\ref{irreg-sing}) into (\ref{mr}). As a consequence, we can also apply \autoref{final-multisum} in the situation when  the righthand side $F$  is a $(\frac{k_1}{q},\ldots, \frac{k_r}{q})$-sum, uniformly in a neighborhood of $0\in \C^n$. The point for not stating this directly in the theorem is that both ${\bf k}$ and $q$ need to be known and cannot be read off immediately on (\ref{irreg-sing}).
\begin{remark}\label{real-axis}
Let $\hat y$ be a formal power series solution of (\ref{irreg-sing}). Let ${\bf k/q}:= (\frac{k_1}{q},\ldots, \frac{k_r}{q})$ as above. Then $\hat y$ has a ${\bf k/q}$-sum $y^{\pm}$ defined in a sector containing the direction $\R^{\pm}$. Indeed, 
having done an appropriate analytic transformation and a ramification $x=z^{1/q}$, we consider (\ref{mr}). Let $\epsilon_j>0$ and let $\tilde I:=\cup_j\cup_{h\in [n_1+\ldots +n_{j-1}+1,n_1+\ldots +n_j]}]\arg \lambda_h-\epsilon_j, \arg \lambda_h+\epsilon_j[$. It is always possible to choose the $\epsilon_j$'s small enough so that the exists a $\tau_+\not\in\tilde I$ and so that $|\tau_+|<\frac{\pi}{2k_r} +\frac{1}{2}\min\frac{\epsilon_j}{2}$. Therefore, for all $j$, $-\tau_+-\frac{\pi}{2k_j} -\frac{\epsilon_j}{2}<0<-\tau_++\frac{\pi}{2k_j} +\frac{\epsilon_j}{2}$. This means that $\R^+$ belongs to the sector $I_j^+$ bissected by $\tau_+$ and of opening $\frac{\pi}{k_j} +\epsilon_j$, for all $j$. Setting $\tau_-=\tau_++\pi$, then $\R^-$ belongs to the sector $I_j^-$ bissected by $\tau_-$ and of opening $\frac{\pi}{k_j} +\epsilon_j$, for all $j$. According to theorem \autoref{final-multisum}, $\hat y$ is ${\bf k}$-multisummable on ${\bf I}^{\pm}$ and its ${\bf k}$-sum ${\bf y_{k,I^{\pm}}}$ is defined on $\R^{\pm}$. To obtain the same result for \ref{irreg-sing}, one has to divide $\tau_+$ by $q$ and set $\tau_-=\tau_++\pi/q$.
\end{remark}

\section{Complete system for a generic nonminimal hypersurface}

We start with the proof of \autoref{main}. We assume both reference points $p,p^*$ to be the origin. As was discussed in the Introduction, in the finite type case the assertion of \autoref{main} follows from \cite{ber1}. In the Levi-flat case the assertion is obvious. Hence, we assume in what follows that {\em both $M,M^*$ are nonminimal at the reference point $0$ but are Levi-nonflat}. 

In this section, we prove \autoref{main} for the class of $m$-nonminimal at the origin hypersurfaces, satisfying the generic assumption that {\em the minimal part $M\setminus X $ of $M$ is Levi-nondegenerate} (thus the origin is a generic nonminimal points, in the terminology of Section 2).
As was explained in Section 2, any such hypersurface can be written in appropriate local holomorphic coordinates by  an equation \eqref{admissible}.  For the convenience of the reader, 
we shall first show how the individual results proved in this section combine to yield 
the proof of \autoref{main}. 

\subsection{Outline of the results and conclusion of the proof in the generic case}  
In order to state the (technical) results needed, we first need a convenient prenormalization of formal 
power series maps. 
Let us observe that given  two hypersurfaces $M,M^*\subset\CC{2}$, given near the origin by  \eqref{admissible} then any  formal power series map 
$$H=(F,G):\quad(M,0)\mapsto (M^*,0)$$ between them has the following specific form. 
\begin{lemma}\label{specialmap}
Any formal power series map $$(z,w)\mapsto \bigl(F(z,w),G(z,w)\bigr)$$ between germs at the origin of two hypersurfaces of the form \eqref{admissible} satisfies:
\begin{equation}\label{specialg}
G= O(w), \quad  G_z=O(w^{m+1}).
\end{equation}
\end{lemma}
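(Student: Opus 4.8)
The plan is to read off both relations from the complexified mapping identity attached to the complex defining functions. Writing $M$ and $M^*$ in the form \eqref{admissible} (with common nonminimality order $m$, since formal equivalence preserves it), after complexifying $\bar z\mapsto\chi,\ \bar w\mapsto\tau$ their defining functions become
\begin{equation*}
\Theta(z,\chi,\tau)=\tau+\tau^m\sum_{k,l\geq1}\Theta_{kl}(\tau)z^k\chi^l,\qquad \Theta^*(z,\chi,\tau)=\tau+\tau^m\sum_{k,l\geq1}\Theta^*_{kl}(\tau)z^k\chi^l,
\end{equation*}
with $\Theta_{11}(0),\Theta^*_{11}(0)\neq0$. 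Denoting by $\overline F,\overline G$ the series obtained from $F,G$ by conjugating coefficients, the condition $H(M)\subset M^*$ is encoded by the formal identity in the independent variables $(z,\chi,\tau)$:
\begin{equation*}
G\big(z,\Theta(z,\chi,\tau)\big)=\Theta^*\Big(F\big(z,\Theta(z,\chi,\tau)\big),\ \overline F(\chi,\tau),\ \overline G(\chi,\tau)\Big).\tag{$\ast$}
\end{equation*}
Throughout I would use the normalizations $\Theta^*(z,0,\tau)=\Theta^*(0,\chi,\tau)=\tau$ (so each $\Theta^*$-sum vanishes once $\chi=0$, and likewise for $\Theta$) together with $F(0,0)=G(0,0)=0$.

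For $G=O(w)$, first I would restrict $(\ast)$ to the complex locus by setting $\tau=0$. Since $\Theta(z,\chi,0)=0$, the left-hand side collapses to $G(z,0)$, which is manifestly independent of $\chi$, while the right-hand side becomes $\Theta^*\big(F(z,0),\overline F(\chi,0),\overline G(\chi,0)\big)$. Evaluating this $\chi$-independent identity at $\chi=0$ and using $\overline F(0,0)=\overline G(0,0)=0$ together with $\Theta^*(z,0,\tau)=\tau$ shows the right-hand side equals $0$. Hence $G(z,0)\equiv0$, i.e. $G=O(w)$.

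For $G_z=O(w^{m+1})$, I would use $G=O(w)$ to write $G(z,w)=w\,g(z,w)$ (so $\overline G(\chi,\tau)=\tau\,\overline g(\chi,\tau)$) and expand both sides of $(\ast)$ in powers of $\tau$, using $\Theta(z,\chi,\tau)=\tau\big(1+\tau^{m-1}\sum_{k,l\geq1}\Theta_{kl}(\tau)z^k\chi^l\big)$. With $g(z,w)=\sum_{j\geq0}g_j(z)w^j$, the decisive feature is that the corrections carried by the $\Theta$- and $\Theta^*$-sums enter only at order $\tau^m$: for $1\leq n\leq m-1$ the coefficient of $\tau^n$ is $g_{n-1}(z)$ on the left and $\overline g_{n-1}(\chi)$ on the right, so each relation $g_{n-1}(z)=\overline g_{n-1}(\chi)$ forces $g_0,\dots,g_{m-2}$ to be constant. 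At order $\tau^m$ one gets
\[
g_{m-1}(z)+g_0(z)\sum_{k,l\geq1}\Theta_{kl}(0)z^k\chi^l=\overline g_{m-1}(\chi)+\overline g_0(\chi)^m\sum_{k,l\geq1}\Theta^*_{kl}(0)\,F(z,0)^k\,\overline F(\chi,0)^l,
\]
and setting $\chi=0$ (which annihilates both sums, each carrying only positive powers of $\chi$, and uses $\overline F(0,0)=0$) yields $g_{m-1}(z)=\overline g_{m-1}(0)$, a constant. Thus $g_0,\dots,g_{m-1}$ are all constant, whence $g_z=O(w^m)$ and therefore $G_z=w\,g_z=O(w^{m+1})$.

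The main obstacle is the bookkeeping in this last step: one must expand $G(z,\Theta)$ and $\Theta^*(F,\overline F,\overline G)$ faithfully up to order $\tau^m$ and check that no $\chi$-dependent contribution appears below that order, so that the separation-of-variables principle ``a function of $z$ equal to a function of $\chi$ must be constant'' applies at each order $n<m$, while at the critical order $n=m$ the constancy of $g_{m-1}$ is extracted by restricting to $\chi=0$. Verifying that the nonlinear corrections are precisely tied to the nonminimality order $m$ (hence their first appearance at $\tau^m$) is the delicate point; the remainder is routine formal manipulation.
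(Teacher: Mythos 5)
Your proof is correct and rests on the same complexified basic identity as the paper's; the first claim $G(z,0)\equiv 0$ is obtained identically, by restricting the identity to $\tau=\chi=0$. For the second claim the paper is more direct---it differentiates the identity once in $z$ and evaluates at $\bar z=0$ (where $w=\bar w$) to read off the factorization $G_z(z,\bar w)=\bar G(0,\bar w)^m\,\bar F(0,\bar w)\cdot O(1)=O(\bar w^{\,m+1})$ in one line---whereas your order-by-order expansion in $\tau$ with the separation-of-variables argument reaches the same conclusion (and in passing yields the slightly sharper fact that the coefficients of $w,\dots,w^m$ in $G$ are constants), at the cost of the bookkeeping you describe.
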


\begin{proof}
We interpret \eqref{admissible} as:
$$w=\bar w+\bar w^{m}\cdot z\bar z \cdot O(1).$$ 
Then the basic identity gives:
$$G(z,w)=\bar G (\bar z,\bar w)+\bar G^m(\bar z,\bar w)\cdot \bar F(\bar z,\bar w) \cdot F( z, w)\cdot O(1),\,\,\, \mbox{where}\,\,\, w=\bar w+\bar w^{m}\cdot z\bar z\cdot O(1).$$ 
Putting in the latter identity $\bar z=\bar w=0$, we get $G(z,0)\equiv 0$.  Further,  differentiating with respect to $z$, evaluating at $\bar z=0$ at which one has $w=\bar w$, we get:
$$G_z(z, \bar w)= \bar G(0,\bar w)^m\cdot  \bar F(0,\bar w) \cdot O(1),$$
which already implies the assertion of the lemma.
\end{proof}   

\autoref{specialmap} immediately implies that, when considering formal invertible mappings between hypersurfaces of the form \eqref{admissible}, 
{\em we can restrict to transformations of the form:}
$$z\mapsto z+f(z,w), \quad w\mapsto w+wg_0(w)+w^mg(z,w)$$
with
\begin{equation}\label{normalmap}
\begin{aligned}
 f_z(0,0)=0,\quad g_0(0)=0, \quad  g(z,w)=O(zw)
  \end{aligned}
\end{equation} 
(normalizing the coefficients of  $z,w$ for $F,G$ respectively is possible by means of a linear scaling applied to the source hypersurface). We now expand $f,g$ as:
\begin{equation}\label{expandfg}
f(z,w)=\sum_{j=0}^\infty f_j(w)z^j,\quad g(z,w)=\sum_{j=1}^\infty g_j(w)z^j
\end{equation}
(we point out that the function $g_0(w)$, as in \eqref{normalmap}, is {\em not} present in the expansion \eqref{expandfg}!). In view of \eqref{normalmap} we have
\begin{equation}\label{initdata}
f_1(0)=g_1(0)=0.
\end{equation}
We also introduce the new functions
\begin{equation}\label{YY}
y_1:=f_0,\quad y_2:=g_0,\quad y_3:=f_1,\quad y_4:=g_1,\quad  y_5:=w^mf'_0,\quad y_6:=wg'_0,\quad y_7:=w^mf'_1,\quad y_8:=w^mg'_1.
\end{equation}
It is {\em important} that all the $y_j$ do not have a constant term, as follows from \eqref{normalmap},\eqref{initdata}  and the fact that our transformation maps the origin to itself.
We clearly have
\begin{equation}\label{first}
w^my_1'=y_5, \quad wy_2'=y_6, \quad w^my_3'=y_7, \quad w^my_4'=y_8. 
\end{equation}
We can now state the first main technical result of this section: 
\begin{proposition}\label{systemY}
The formal vector function $Y_0(w):=(y_1(w),...,y_8(w))$ satisfies a meromorphic differential equation  
\begin{equation}\label{merom}
w^m\frac{dY}{dw}=A(w,Y),
\end{equation}
where $A(w,Y)$ is a holomorphic at the origin function.
\end{proposition}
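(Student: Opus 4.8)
The plan is to collect the four coordinate‑wise identities recorded in \eqref{first}, \eqref{second}, \eqref{third} into a single meromorphic system, and then verify that the right‑hand side is holomorphic with no poles in $w$. Concretely, I would define the vector $Y=(y_1,\dots,y_8)$ as in \eqref{YY} and simply \emph{read off} the differential equation satisfied by $Y$ from the relations already derived. The equations \eqref{first} give
\begin{equation}\label{readfirst}
w^m y_1' = y_5,\quad w y_2' = y_6,\quad w^m y_3' = y_7,\quad w^m y_4' = y_8,
\end{equation}
and \eqref{second}, \eqref{third} give $w^m y_5' = \tilde U$, $w^m y_6' = \tilde V$, $w^m y_7' = \tilde X$, $w^m y_8' = \tilde Y$, where $\tilde U,\tilde V,\tilde X,\tilde Y$ are holomorphic at the origin in all their variables. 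The only cosmetic obstruction is that the second of the equations in \eqref{readfirst} carries the factor $w$ rather than $w^m$; since $m\geq 1$, I would simply multiply that equation through by $w^{m-1}$, so that every component acquires the uniform factor $w^m$ on the left. Assembling the eight scalar equations into the single vector equation $w^m \frac{dY}{dw} = A(w,Y)$ then yields the claimed form \eqref{merom}, with $A$ holomorphic at the origin.

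The substantive content — and hence the only step requiring genuine care — is the claim that $A(w,Y)$ is \emph{holomorphic} at the origin, i.e.\ that all the poles in $w$ that a priori could appear have been absorbed into the factor $w^m$ on the left. This is exactly what properties (a) and (b) following \eqref{II} were recorded to guarantee, together with the multiplications by $w^m$ and $w^{2m}$ built into \eqref{II}. I would trace through this as follows: the functions $\tilde U,\tilde V,\tilde X,\tilde Y$ arise from solving the linear systems \eqref{solve2jet}, \eqref{solve2jet1} by Cramer's rule. The denominators in those systems are the relevant Jacobian‑type determinants, which by construction are of the form $1 + O(w,z)$ and hence invertible as holomorphic functions near the origin, contributing no poles. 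The numerators are built from the $T_k$ in \eqref{II}; property (a) ensures that wherever $f_w,g_w$ or $g_0'$ enter, they come multiplied by $w^m$ (resp.\ $w$), so that after re‑expressing these derivatives via $y_5,\dots,y_8$ using \eqref{YY} no negative powers of $w$ survive.

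The one point I would check most carefully is the interaction of the multiplications by $w^m$ and $w^{2m}$ in \eqref{II} with the second‑order derivatives. When I solve \eqref{solve2jet1} for $w^{2m}f_1''$ and $w^{2m}g_1''$, the right‑hand sides $X,Y$ depend on $w^{2m}f_0''$ and $w^{m+1}g_0''$, which by \eqref{solve2jet} are themselves holomorphic; so the composition stays holomorphic, and dividing by $w^m$ to pass from $w^{2m}f_1''$ to the quantity $w^m y_7' = w^m(w^m f_1')'$ must be reconciled with the identity $w^m y_7' = \tilde X$. Here I expect the main obstacle to be purely bookkeeping: confirming that the power of $w$ produced by differentiating $y_7 = w^m f_1'$ matches the power extracted in \eqref{II}, so that exactly one factor $w^m$ remains on the left and nothing is left over on the right. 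Since the excerpt has already packaged each of these verifications into the holomorphic functions $\tilde U,\tilde V,\tilde X,\tilde Y$, the proof I would write is essentially a one‑line assembly: invoke \eqref{first}, \eqref{second}, \eqref{third}, normalize the $y_2$ equation by the factor $w^{m-1}$, and define $A$ to be the resulting holomorphic right‑hand side; the holomorphy of $A$ then follows from observations (a), (b) and the invertibility of the Cramer denominators.
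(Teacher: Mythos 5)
Your proposal is correct and follows essentially the same route as the paper, which proves the proposition simply by assembling the already-derived relations \eqref{first}, \eqref{second}, \eqref{third} into a single vector equation; your normalization of the $y_2$ equation by the factor $w^{m-1}$ (valid since $m\geq 1$) and your bookkeeping check that $w^m y_7' = m w^{m-1} y_7 + w^{2m} f_1''$ remains holomorphic are exactly the (implicit) verifications underlying the paper's one-line conclusion.
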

Applying now the fundamental  \autoref{final-multisum} on the multisummability of formal solutions of nonlinear differential equation at an irregular singularity, as well as \autoref{real-axis} (see Section 2.6), we immediately obtain
\begin{corollary}\label{summable}
There exist sectors $S^+,S^-\subset\C$, containing the positive and the negative real lines, directions $d^\pm$, functions $f^\pm_0(w),g^\pm_0(w),f^\pm_1(w),g^\pm_1(w)$ holomorphic in the respective sectors, and  a multi-order  $\mathbf{k}=(k_1,...,k_l)$ such that the following holds.

\smallskip

\noindent (i) The functions $f^\pm_0(w),g^\pm_0(w),f^\pm_1(w),g^\pm_1(w)$ are the $\mathbf{k}$-multisums of $f_0,g_0,f_1,g_1$ in the directions $d^\pm$, respectively; 

\smallskip

\noindent (ii) The holomorphic in  respectively $S^\pm$ functions $Y^\pm(w)$, constructed via $f^\pm_0(w),g^\pm_0(w),f^\pm_1(w),g^\pm_1(w)$ by using formulas \eqref{YY}, satisfy the ODE \eqref{merom}.

\end{corollary}
The last point is a consequence of uniqueness of multisummable functions. Since $Y$ is {\bf k}-multisommable on some multisectors, so are functions $w^mf'_0,wg'_0,w^mf'_1,w^mg'_1$. Thus equalities \eqref{YY} hold. We will also need that Corollary~\ref{summable} also holds with $f_0, f_1, g_0, g_1$
replaced by their conjugates, with the same multi-order $\mathbf{k}$:
\begin{corollary}\label{unnecessary}   There exist functions 
\begin{equation}\label{bared}
\overline{f^\pm_0}(w),\overline{g^\pm_0}(w),\overline{f^\pm_1}(w),\overline{g^\pm_1}(w),
\end{equation} 
holomorphic in the respective sectors $S^\pm$, which are the $\mathbf{k}$-multisums in the directions $d^\pm$ of  $\bar f_0(w),\bar g_0(w),\bar f_1(w),\bar g_1(w)$. Furthermore, the corresponding maps
$\overline{Y^\pm}$ defined as in \eqref{YY}, satisfy the meromorphic ODE $w^m \overline{Y^\pm} = \bar A(w,\overline{Y^\pm})$.
\end{corollary}   

The second main technical result of this section shows that $f$ and $g$ can be reproduced
from $f_0, f_1, g_0, g_1$. 
\begin{proposition}\label{ovcyann} There exist holomorphic functions $\varphi$ 
and $\psi$ defined in a neighbourhood of the origin in $\C^7$ such that the equality
\begin{equation}\label{Finally}
\begin{aligned}
f(z,w)=\varphi\bigl(z,w,g_0(w),wg_0'(w),f_0(w),f_1(w),g_1(w)\bigr), \\ 
g(z,w)=\psi\bigl(z,w,g_0(w),wg_0'(w),f_0(w),f_1(w),g_1(w)\bigr)
\end{aligned}
\end{equation}
holds for every $(f,g)$ as above. 
\end{proposition}

Before we turn to the proofs of the technical statements above, we show  how these statements imply \autoref{main} in the generic case.
\begin{proof}[Proof of \autoref{main} in the generic setting]
Let us introduce the functions
\begin{equation}\label{themap}
\begin{aligned}
f^\pm(z,w)=\varphi\bigl(z,w,g^\pm_0(w),w\cdot(g^\pm_0)'(w),f^\pm_0(w),f^\pm_1(w),g^\pm_1(w)\bigr), \\ 
g^\pm(z,w)=\psi\bigl(z,w,g^\pm_0(w),w\cdot(g^\pm_0)'(w),f^\pm_0(w),f^\pm_1(w),g^\pm_1(w)),
\end{aligned}
\end{equation}
well defined in the product of a disc $\Delta$ in $z$ centered at the origin and the sectors $S^\pm$ in $w$ (this product forms a {\em tangential sectorial domain}, as described in Section 2. $f^\pm(z,w),g^\pm(z,w)$  are asymptotically represented in their domains by $f(z,w),g(z,w)$ respectively, as follows from \eqref{Finally}. Based on \eqref{bared}, we similarly introduce $\overline{f^\pm}(z,w),\,\overline{g^\pm}(z,w),$ asymptotically representing $\bar f(z,w),\bar g(z,w)$, respectively.

Let us now consider the (complexified) basic identity
\begin{equation}\label{basicformal}
G(z,w)-\rho^*\bigl(F(z,w),\overline {F}(\xi,\eta),\overline {G}(\xi,\eta)\bigr)|_{w=\rho(z,\xi,\eta)}=0
\end{equation}
for the map $(F,G)$ between the germs at the origin of the initial hypersurfaces $M=\bigl\{w=\rho(z,\bar z,\bar w)\bigr\}$ and $M^*=\bigl\{w=\rho^*(z,\bar z,\bar w)\bigr\}$.
We claim that the sectorial map $(F^\pm(z,w),G^\pm(z,w))$ constructed via $f^\pm,g^\pm$ by the formula \eqref{normalmap} satisfies the basic identity \eqref{basicformal} as well, i.e.
\begin{equation}\label{basicCR}
G^\pm(z,w)-\rho^*\bigl(F^\pm(z,w),\overline {F^\pm}(\xi,\eta),\overline {G^\pm}(\xi,\eta)\bigr)|_{w=\rho(z,\xi,\eta)}=0, \quad (z,\xi,\eta)\in \Delta\times\Delta\times S^\pm.
\end{equation}

To prove the claim, let us analyze the identity \eqref{basicCR}. The left hand side of it, which we denote by 
$$\chi(z,\xi,\eta),$$
is holomorphic in $\Delta\times\Delta\times S^\pm$, respectively. Accordingly, the identity \eqref{basicCR} holds if and only if we have:
\begin{equation}\label{zxi}
\left.\frac{\partial^{p+q}}{\partial z^p\partial\xi^q}\chi(z,\xi,\eta)\right|_{z=\xi=0}\equiv 0, \quad p,q\geq 0.
\end{equation}
However, it is not difficult to verify (by applying the chain rule) that for each fixed $p,q\geq 0$ the left hand side in \eqref{zxi} is an analytic function $R_{p,q}$ in $\eta$, the sectorial functions
$$f^\pm_0(\eta),g^\pm_0(\eta),f^\pm_1(\eta),g^\pm_1(\eta),\overline{f^\pm_0}(\eta),\overline{g^\pm_0}(\eta),\overline{f^\pm_1}(\eta),\overline{g^\pm_1}(\eta),$$ 
and their derivatives of order $\leq p+q$. Hence, each left hand side in \eqref{zxi} is the $\mathbf{k}$-multisum of the identical analytic expressions $R_{p,q}$ in formal series, where $f^\pm_0(\eta),g^\pm_0(\eta),f^\pm_1(\eta),g^\pm_1(\eta)$ are replaced by the asymptotic expansions $f_0,g_0,f_1,g_1$, respectively, and  $\overline{f^\pm_0}(\eta),\overline{g^\pm_0}(\eta),\overline{f^\pm_1}(\eta),\overline{g^\pm_1}(\eta)$ by their asymptotic expansions $\bar f_0(\eta),\bar g_0(\eta),\bar f_1(\eta),\bar g_1(\eta)$, respectively. In view of the (valid!) formal basic identity \eqref{basicformal}, the latter formal series in $\eta$ vanish identically for any $p,q\geq 0$. The uniqueness property within the class of $\mathbf{k}$-multisummable series in the directions $d^\pm$ implies now that all the left hand sides in \eqref{zxi} {\em all vanish identically}.

As was explained in Section 2, the property \eqref{basicCR} for a sectorial map defined in a tangential sectorial domain implies that the restriction of the map onto the source manifold is a $C^\infty$ CR-map onto the target. Thus, the claim under discussion implies the assertion of the theorem. 
\end{proof}

The rest of this section is devoted to the proofs of the Propositions above. 

\subsection{Associated complete system} We show the following:
\begin{proposition}\label{associated}
Associated with a hypersurface \eqref{admissible} is  a second order singular holomorphic ODE $\mathcal E(M)$ given by 
\begin{equation}\label{ODE}
w''=w^m\Phi\left(z,w,\frac{w'}{w^m}\right),
\end{equation}   
where $\Phi(z,w,\zeta)$ is a holomorphic near the origin in $\CC{3}$ function with $\Phi=O(\zeta)$. The latter means that all Segre varieties of $M$ (besides the complex locus $X=\{w=0\}$ itself), considered as graphs $w=w_p(z)$, satisfy the ODE \eqref{ODE}. 
\end{proposition}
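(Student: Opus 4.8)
The plan is to follow the construction of $\mathcal E(M)$ from \autoref{sub:realhyp2ndorderequ}, while carefully tracking the singular factor $w^m$ forced by the degeneracy of the Levi determinant along $X=\{w=0\}$. Write the complex defining equation of $M$ as $w=\Theta(z,\bar z,\bar w)$ in the form \eqref{admissible}, and for a point $\zeta=(a,b)$ describe its Segre variety $Q_\zeta$ as the graph $w=\Theta(z,\xi,\eta)$, where $\xi=\bar a$ and $\eta=\bar b$ are regarded as constant antiholomorphic parameters. Along such a graph one has, exactly as in \eqref{segre0}--\eqref{segreder2}, the relations $w'=\Theta_z(z,\xi,\eta)$ and $w''=\Theta_{zz}(z,\xi,\eta)$, and the whole task is to eliminate $(\xi,\eta)$ in favour of the variables $\bigl(z,w,\tfrac{w'}{w^m}\bigr)$.

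First I would solve the equation $w=\Theta(z,\xi,\eta)$ for $\eta$. Since $\Theta_\eta(0)=1$, the implicit function theorem yields a holomorphic solution $\eta=E(z,\xi,w)$. Because $w\equiv 0$ forces $\eta\equiv 0$, one has $E(z,\xi,0)=0$, so $E=w\cdot e(z,\xi,w)$; differentiating $w=\Theta(z,\xi,E)$ in $w$ at the origin shows $e(0)=1$, a unit. In particular $\eta$ and $w$ vanish to the same order and $E^m=w^m\cdot(\text{unit})$.

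The heart of the matter, and the step I expect to be the main obstacle, is the elimination of $\xi$: here the vanishing of the Levi determinant along $X$ blocks the direct application of the implicit function theorem used in \autoref{sub:realhyp2ndorderequ}. The resolution is to exploit the precise shape of \eqref{admissible}. Differentiating once, the overall factor $\bar w^m\mapsto\eta^m$ together with the fact that the double sum runs over $k,l\ge 1$ gives $\Theta_z=\eta^m\,\xi\,P(z,\xi,\eta)$ with $P(0)=\Theta_{11}(0)\neq 0$. Substituting $\eta=E=w\cdot e$ produces the factorization
\[
w'=w^m\,\xi\,\hat\Psi(z,\xi,w),\qquad \hat\Psi(0)=\Theta_{11}(0)\neq 0,
\]
so that, setting $\zeta:=w'/w^m$, we have $\zeta=\xi\,\hat\Psi(z,\xi,w)$. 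It is exactly the genericity hypothesis $\Theta_{11}(0)\neq 0$ (Levi-nondegeneracy of $M\setminus X$) that makes $\partial\zeta/\partial\xi$ invertible at $0$, so the implicit function theorem now applies and yields $\xi=\Xi(z,w,\zeta)$ with $\Xi(z,w,0)=0$, i.e. $\Xi=O(\zeta)$; back-substitution then gives $\eta=\mathcal H(z,w,\zeta):=E(z,\Xi,w)=w\cdot(\text{unit})$.

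Finally I would differentiate twice. The same structural features of \eqref{admissible} give $\Theta_{zz}=\eta^m\,\xi\,R(z,\xi,\eta)$ with $R$ holomorphic near $0$. Substituting $\eta=\mathcal H$ and $\xi=\Xi=O(\zeta)$ yields
\[
w''=\mathcal H^m\,\Xi\,R(z,\Xi,\mathcal H)=w^m\,\Phi(z,w,\zeta),
\]
where $\Phi$ is holomorphic near the origin in $\CC{3}$ and, owing to the factor $\Xi=O(\zeta)$, satisfies $\Phi=O(\zeta)$. This is precisely \eqref{ODE}. By construction every graph $w=\Theta(z,\bar a,\bar b)$ with $\bar b\neq 0$ satisfies it; the excluded Segre variety is exactly $X=\{w=0\}$, corresponding to $\bar b=0$, on which $w\equiv 0$ makes $w'/w^m$ undefined. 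This completes the argument.
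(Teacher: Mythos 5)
Your proof is correct and follows exactly the elimination scheme that the paper's own (omitted) proof refers to: it is the adaptation of the construction of \autoref{sub:realhyp2ndorderequ} to the degenerate setting, with the key twist of solving first for $\eta$ via $\Theta_\eta(0)=1$ and then for $\xi$ in terms of the rescaled derivative $\zeta=w'/w^m$ using $\Theta_{11}(0)\neq 0$. The paper simply cites the analogous statement in \cite{nonminimalODE,analytic} and leaves the details to the reader; your write-up supplies precisely those details, and all the factorizations ($E=w\cdot e$ with $e(0)=1$, $\Theta_z=\eta^m\xi P$ with $P(0)=\Theta_{11}(0)$, $\Xi=O(\zeta)$) check out.
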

\begin{proof}
The argument of the proof very closely follows the one given in the proof of an analogues statement in \cite{nonminimalODE}, \cite{analytic} for the case of {\em $m$-admissible hypersurfaces}, and we leave the details of the proof to the reader.
\end{proof}

Based on the connection between mappings of hypersurfaces and that of the associated ODEs discussed in Section 2 and \autoref{specialmap}, we come to the consideration of ODEs \eqref{ODE} and formal power series mappings \eqref{normalmap} between them. We further recall that the fact that a mapping $(F(z,w),G(z,w))$ transforms an ODE $\mathcal E$ into an ODE $\mathcal E^*$ is equivalent to the fact  that the second jet prolongation $(F^{(2)},G^{(2)})$ transforms the ODEs $\mathcal E,\mathcal E^*$ into each other, where the ODEs are considered as submanifolds in $J^2(\CC{},\CC{})$. Applying this to two nonsingular ODEs $\mathcal E=\bigl\{w''=\Psi(z,w,w')\bigr\},\,  \mathcal E^*=\bigl\{w''=\Psi^*(z,w,w')\bigr\}$ and employing the classical jet prolongation formulas (e.g., \cite{bluman}), we obtain: 
\begin{multline}\label{trule}
\Psi(z,w,w')=\frac{1}{J}\left((F_z+w'F_w)^3\Psi^*\Bigl(F(z,w),G(z,w),\frac{G_z+w'G_w}{F_z+w'F_w}\Bigr)+\right.\\
+ I_0(z,w)+I_1(z,w)w'+I_2(z,w)(w')^2+I_3(z,w)(w')^3\Bigr),
\end{multline}
where $J:=F_z G_w-F_wG_z$ is the Jacobian determinant of the transformation and 
\begin{equation}\begin{aligned}
I_0 &=G_zF_{zz}-F_zG_{zz}\\
I_1 &=G_wF_{zz}-F_wG_{zz}-2F_zG_{zw}+2G_zF_{zw}\\
I_2 &=G_{z}F_{ww}-F_zG_{ww}-2F_wG_{zw}+2G_wF_{zw}\\
I_3 &=G_wF_{ww}-F_wG_{ww}.
\end{aligned}
\end{equation}
Setting then $\Psi(z,w,w'):=w^m\Phi\left(z,w,\frac{w'}{w^m}\right)$ (and similarly for $\Phi^*$) and switching to the notations in \eqref{normalmap}, we obtain the transformation rule for the class of ODEs \eqref{ODE} and mappings \eqref{normalmap} between them:

\begin{multline}\label{trule3}
w^m\Phi\left(z,w,\frac{w'}{w^m}\right)=\frac{1}{J}\Bigl[\bigl(1+ f_z+w' f_w)^3 (1+g_0(w)+w^{m-1}g\bigr)^m\cdot\\
\cdot w^m\Phi^*\Bigl(z+ f,w+wg_0(w)+ w^mg,\frac{w^mg_z+w'(1+ wg_0'+g_0+mw^{m-1}g+w^mg_w)}{w^m(1+g_0(w)+w^{m-1}g)^m(1+f_z+w' f_w)}\Bigr)+\\
+ I_0(z,w)+I_1(z,w)w'+I_2(z,w)(w')^2+I_3(z,w)(w')^3\Bigr],
\end{multline}
where
\begin{equation}\label{2jet}
\begin{aligned}
J&=(1+f_z)(1+g_0+wg_0'+w^mg_w+mw^{m-1}g)-w^mf_wg_z,\\
I_0 &= w^m\bigl(g_zf_{zz}- (1+f_z)g_{zz}\bigr),\\
I_1 &=\bigl(1+wg_0'+g_0+mw^{m-1}g+w^mg_w\bigr)f_{zz}-w^mf_wg_{zz}-\\
&-2(1+ f_z) 
(mw^{m-1}g_z+w^mg_{zw})+2w^m g_z f_{zw},\\
I_2 &=w^m g_{z}f_{ww}-(1+ f_z)(wg_0''+2g_0'+m(m-1)w^{m-2}g+2mw^{m-1}g_w+w^mg_{ww})-\\
&-2f_w(mw^{m-1}g_z+w^mg_{zw}) +2(1+wg_0'+g_0+mw^{m-1}g+w^mg_w) f_{zw},\\
I_3 &=(1+wg_0'+g_0+mw^{m-1}g+w^mg_w)f_{ww}-\\
&-f_w(wg_0''+2g_0'+m(m-1)w^{m-2}g+2mw^{m-1}g_w+w^mg_{ww}).
\end{aligned}
\end{equation}
Importantly, after putting $w'=\zeta w^m$, \eqref{trule3} becomes an identity of formal power series in the {\em independent} variables $z,w,\zeta$. 

We now extract from \eqref{2jet} four identities of power series {\em in $z,w$ only}, in the following way. For the first identity, we extract in \eqref{2jet} terms with $(w')^0$ and divide the resulting identity by $w^m$. For the second identity, we extract in \eqref{2jet} terms with $(w')^1$. For the third identity, we extract in \eqref{2jet} terms with $(w')^2$ and multiply the resulting identity (which has a pole in $w$ of order $m$) by $w^m$. For the last identity, we extract in \eqref{2jet} terms with $(w')^3$ and multiply the resulting identity (which has a pole in $w$ of order $2m$) by $w^{2m}$. The four resulting identities of formal power series in $z,w$ can be written as:
\begin{equation}\label{II}
\begin{aligned}
I_0=w^mT_0(z,w,j^1(f,g,g_0)),&\quad I_1=T_1(z,w,j^1(f,g,g_0)),\\ 
w^mI_2= T_2(z,w,j^1(f,g,g_0)),&\quad w^{2m}I_3=T_3(z,w,j^1(f,g,g_0)),
\end{aligned} 
\end{equation}
where $j^1(f,g.g_0)$ denotes the $1$-jet of $f,g,g_0$ (the collection of derivatives of order $\leq 1$), and $T_k(\cdot,z,w)$ are four precise holomorphic at the origin functions, exact form of which is of no interest to us.  We though emphasize two important properties of the identities \eqref{II}:

\smallskip

\noindent (a) the derivatives $f_w,g_w$ come in each $T_k$ with the factor $w^m$, and the derivative $g_0'$ comes in each $T_k$ with the factor $w$; 

\smallskip

\noindent (b) the derivatives $f_w,g_w,f_{zw},g_{zw}$ all come in all the left hand sides in \eqref{II} with the factor $w^m$, the derivatives $f_{ww},g_{ww}$ all come in all the left hand sides in \eqref{II} with the factor $w^{2m}$, and the derivatives $g'_{0},g''_0$ come in all the left hand sides in \eqref{II} with the factor $w$.

\smallskip

It is also not difficult to verify that the identities \eqref{II} are well defined, i.e. the formal power series under considerations all come into the right hand side in \eqref{II} with the zero constant term.

We can now prove Proposition~\ref{systemY}.
\begin{proof}[Proof of Proposition~\ref{systemY}]
We consider in the last two identities in \eqref{II} terms with $z^{0},z^{1}$, respectively. This gives us {\em four second order singular ODEs for the functions  $f_0,f_1,g_0,g_1$.} In the two identities with $z^0$, only the second order derivatives $f_0'',g_0''$ participate (the other derivatives have order $\leq 1$). It is not difficult to solve the latter identities for $w^{2m}f''_0,w^{m+1}g''_0$ (by applying the Cramer rule to the a nondegenerate linear system). We obtain, by combining the information in \eqref{2jet},\eqref{first} and the observations (a),(b) above:
\begin{equation}\label{solve2jet}
w^{2m}f''_0=U(y_1,y_2,...,y_8,w),\quad w^{m+1}g''_0=U(y_1,y_2,...,y_8,w),
\end{equation}
where $U$ and $V$ are two holomorphic at the origin functions in all their variables, exact form of which is of no interest to us. Using the $y$-notations and \eqref{first}, the equations \eqref{solve2jet} give:
\begin{equation}\label{second}
w^my_5'=\tilde U(y_1,y_2,...,y_8,w), \quad w^my_6'=\tilde V(y_1,y_2,...,y_8,w), 
\end{equation}
where, again, $\tilde U$ and $\tilde V$ are two holomorphic at the origin functions in all their variables, exact form of which is of no interest to us.

To obtain the missing conditions for $y_7',y_8'$, we use the system of two second order ODEs obtained by collecting in the last two identities of  \eqref{II} terms with $z^1$. Considering this system as a (nondegenerate) linear system in $w^{2m}f_1'',w^{2m}g_1''$ and solving by Cramer rule, we get:
\begin{equation}\label{solve2jet1}
w^{2m}f''_1=X(y_1,y_2,...,y_8,w^{2m}f''_0,w),\quad w^{2m}g''_1=Y(y_1,y_2,...,y_8,w^{m+1}g''_0,w),
\end{equation}
where $X$ and $Y$ are two holomorphic at the origin functions in all their variables, exact form of which is of no interest to us. Combining this with \eqref{solve2jet} and using \eqref{first}, we finally obtain
\begin{equation}\label{third}
w^my_7'=\tilde X(y_1,y_2,...,y_8,w), \quad w^my_8'=\tilde Y(y_1,y_2,...,y_8,w), 
\end{equation}
By putting \eqref{first},\eqref{second},\eqref{third} together, we obtain the Proposition.
\end{proof}

We now turn to the proof of Corollary~\ref{unnecessary}.

\begin{proof}[Proof of Corollary~\ref{unnecessary}]
We first want to show that the "barred" power series $\bar f_0(w),\bar g_0(w),\bar f_1(w),\bar g_1(w)$ belong to the same summability class as the original series. For doing so, let us consider the associated with \eqref{merom} ODE
\begin{equation}\label{merombar}
w^m\frac{dZ}{dw}=\bar A(w,Z),
\end{equation}
where $A(w,Y)$ is as in \eqref{merom}. We first note that the "barred" power series $\bar Y_0(w)$ satisfies the ODE \eqref{merombar}. Now, let us  write $\bold Y:=(Y,Z)$ and 
$$\bold A(w,\bold Y) :=\begin{pmatrix} A(w,Y) &0\\ 0 & \bar A(w,Z)\end{pmatrix},$$
and then consider the system
\begin{equation}\label{meromlong}
w^m\frac{d\bold Y}{dw}=\bold A(w,\bold Y).
\end{equation}
 Applying  \autoref{final-multisum} and \autoref{real-axis}  for the "decoupled"\, system \eqref{meromlong}, we find sectors $S^+,S^-\subset\CC{}$  containing the positive and the negative real lines (which we without loss of generality assume to be equal to the ones in \autoref{summable}), direction $d^\pm$ (which we without loss of generality assume to be equal to the ones in \autoref{summable}), and functions 
\begin{equation}
\overline{f^\pm_0}(w),\overline{g^\pm_0}(w),\overline{f^\pm_1}(w),\overline{g^\pm_1}(w),
\end{equation} 
holomorphic in the respective sectors $S^\pm$, which are the $\mathbf{k}$-multisums in the directions $d^\pm$ of  $\bar f_0(w),\bar g_0(w),\bar f_1(w),\bar g_1(w)$, respectively (we, again, assume without loss of generality that the multi-order $\mathbf{k}$ equals to the one in \autoref{summable}). In addition, the holomorphic in  respectively $S^\pm$ function $\overline{Y^\pm}(w)$, constructed via $\overline{f^\pm_0}(w),\overline{g^\pm_0}(w),\overline{f^\pm_1}(w),\overline{g^\pm_1}(w)$ by using formulas \eqref{YY}, satisfies the ODE \eqref{merom}. 
\end{proof}
 
 The last remaining piece is now the proof of Proposition~\ref{ovcyann}.

\begin{proof}[Proof of Proposition~\ref{ovcyann}] We now  consider the first two equations in \eqref{II}. Read together, they can be treated as a system of linear equations in $f_{zz},g_{zz}$ determinant of which at the origin is non-vanishing. Applying the Cramer rule, we obtain the following system of equations:
\begin{equation}\label{findfg}
f_{zz}=P(z,w,j^1(f,g),g_0,wg_0',f_{zw},g_{zw}), \quad g_{zz}=Q(z,w,j^1(f,g),g_0,wg_0',f_{zw},g_{zw}),
\end{equation}
where $P,Q$ are appropriate functions holomorphic in their arguments. We now consider the intimately related Cauchy problem
\begin{equation}\label{Cauchy}
f_{zz}=P(z,w,j^1(f,g),\alpha_0,\alpha_1,f_{zw},g_{zw}), \quad g_{zz}=Q(z,w,j^1(f,g),\alpha_0,\alpha_1,f_{zw},g_{zw})
\end{equation}
with the Cauchy data
\begin{equation}\label{Cdata}
f(0,w)=\beta_0,\quad f_z(0,w)=\beta_1,\quad g(0,w)=0,\quad g_z(0,w)=\beta_2,
\end{equation}
where $\alpha_i,\beta_j$ are additional parameters. By the parametric version of the Cauchy-Kowalevski theorem, namely the Ovcyannikov's theorem \cite{ovcyannikov, treves-ovcyannikov}, the latter Cauchy problem has a unique analytic solutions 
$$f=\varphi(z,w,\alpha_0,\alpha_1,\beta_0,\beta_1,\beta_2), \quad g=\psi(z,w,\alpha_0,\alpha_1,\beta_0,\beta_1,\beta_2),$$ where $\varphi$ and $\psi$ depend analytically on all their arguments. Hence, taking into account \eqref{normalmap},\eqref{expandfg}, we finally have the identities \eqref{Finally}. 
(we emphasize that the substitution of formal power series into $\varphi,\psi$ is well defined here, since {\em all} the formal data being substituted has no constant term!).
\end{proof}
\medskip

\section{The exceptional case}

In this section, we prove \autoref{main} in full generality. For that, we have to consider the case when, for an $m$-nonminimal at the origin hypersurface $M\subset\CC{2}$, the minimal part $M\setminus X$ contains Levi degenerate points. In this case, $M$ can {\em not} be associated to an ODE \eqref{ODE}. We overcome this difficulty by introducing {\em associated ODEs of high order.}

The proof of \autoref{main} in the general case has several ingredients, each of which we put in a separate subsection below. While we follow closely the structure of Section 3, the tools which 
we need to introduce in this section are considerably harder: We treat the multisummability of what will
later be ``initial terms'' in certain Cauchy problems in 4.1, then discuss the associated ODEs of 
higher order in 4.2. We can then prove \autoref{main} under an additional technical condition in 
4.3, and in 4.4 introduce the necessary geometrical concept to use this 
technical condition in full generality to complete the proof in 4.5.  

\subsection{$\mathbf{k}$-summability of initial terms}
In what follows, for hypersurfaces under consideration we consider the defining equation \eqref{mnonminimal}. Since $M$ is strictly pseudoconvex at generic points, a generic real-analytic curve 
$\Gamma\subset M$ through $0$, transverse to $T^c_0 M$, will not contain any Levi-degenerate point except for $0$. If we choose 
normal coordinates for which $\Gamma = \left\{ (z,w) \in M \colon z = 0 \right\}$ (which is possible, see e.g. \cite[Lemma 4.1]{lmblowups}, then its the complex defining equation 
\begin{equation}\label{compd}
w=\Theta(z,\bar z,\bar w) \quad \Theta(z,\bar z,\bar w)=\bar w + \sum_{j,k\geq 1} \Theta_{jk}(\bar w)z^k\bar z^l, \quad \Theta\not\equiv 0
\end{equation}
satisfies the additional condition 
\begin{equation}\label{Theta11}
\Theta_{11}(\bar w)\not\equiv 0.
\end{equation}
The fact that the minimal part $M\setminus X$ contains Levi degenerate points reads as
\begin{equation}\label{badcase}
\mbox{ord}_0\,\Theta_{11}(\bar w)>m.
\end{equation}

We start by considering for a formal power series map $(F,G)$ between germs at the origin of hypersurfaces \eqref{compd}  the expansion:
\begin{equation}\label{initmap}
F=\sum_{j\geq 0}F_j(w)z^j,\quad G=\sum_{j\geq 0}G_j(w)z^j.
\end{equation}
Arguing similarly to the proof of \autoref{specialmap}, it is not difficult to prove
\begin{lemma}\label{specialmap1}
The components of the formal map $(F,G)$ satisfy:
\begin{equation}\label{normalmap1}
F_z(0,0)=F_1(0)\neq 0, \quad G_w(0,0)=G_0'(0)\neq 0, \quad G(z,w)=O(w), \quad G_z(z,w)=O(w^{m}).
\end{equation}
Thus, in suitable coordinates,  we may assume
$$F_z(0,0)=F_1(0)=1,\quad G_w(0,0)=G_0'(0)=1.$$
\end{lemma}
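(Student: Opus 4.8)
\textbf{Proof plan for \autoref{specialmap1}.}
The plan is to run the argument of \autoref{specialmap} in the slightly more general setting of \eqref{compd}, where now the coefficient $\Theta_{11}$ need not satisfy $\Theta_{11}(0)\neq 0$ but only \eqref{Theta11}. First I would write down the complexified basic identity for the map $(F,G)$ between the germs defined by \eqref{compd}, namely
\begin{equation*}
G(z,w)=\Theta^*\bigl(F(z,w),\overline F(\xi,\eta),\overline G(\xi,\eta)\bigr)\Big|_{w=\Theta(z,\xi,\eta)},
\end{equation*}
and exploit the structural normalizations $\Theta(z,0,\tau)=\Theta(0,\chi,\tau)=\tau$ (and likewise for $\Theta^*$). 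Setting $\xi=\eta=0$ in the identity forces $w=\Theta(z,0,0)=0$, whence $G(z,0)\equiv 0$; this gives $G=O(w)$. This is the exact analogue of the step ``putting $\bar z=\bar w=0$'' in \autoref{specialmap}, only now read off from the complex defining function rather than from $F$.

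Next I would extract $G_z=O(w^m)$. Differentiating the basic identity in $z$ and then restricting to $\xi=0$ (so that $w=\Theta(z,0,\eta)=\eta$, using $\Theta(z,0,\tau)=\tau$) should express $G_z(z,\eta)$ in terms of the target's $\Theta^*$-expansion evaluated along $\overline F(0,\eta),\overline G(0,\eta)$. Because the target is $m$-nonminimal, its complex defining function satisfies $\Theta^*(z,\chi,\tau)=\tau\,\theta^*(z,\chi,\tau)$ with the vanishing described after \eqref{mnonminimal}, so every $z$-derivative of $\Theta^*$ carries a factor coming from the $\bar w^m$-homogeneity; tracking this factor through the chain rule produces the claimed order $m$ in $w$. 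The key point to verify carefully here is the bookkeeping of powers of $\eta$: one must confirm that the leading term that survives after differentiating in $z$ and evaluating at $\xi=0$ is genuinely divisible by $\eta^m$ and not by a lower power, which is where the $m$-nonminimality of $M^*$ (not merely of $M$) is used.

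For the two nonvanishing conditions $F_1(0)\neq0$ and $G_0'(0)\neq0$ I would argue by invertibility. Since $\widehat H=(F,G)$ is a formal equivalence, its Jacobian at the origin is nonsingular; combined with $G=O(w)$ and $G_z=O(w^m)$ (so that $G_z(0,0)=0$ and $F_w(0,0)$ is irrelevant to the surviving $2\times2$ block), the Jacobian reduces to a lower-triangular form whose diagonal entries are exactly $F_z(0,0)=F_1(0)$ and $G_w(0,0)=G_0'(0)$. Nonsingularity of the Jacobian then forces both to be nonzero. Finally, the scaling normalization $F_1(0)=G_0'(0)=1$ is achieved by composing with a diagonal linear automorphism $(z,w)\mapsto(\lambda z,\mu w)$ of the source, which preserves the class \eqref{compd}; I would remark only that such a rescaling is admissible precisely because it fixes the complex locus $\{w=0\}$ and the form of the defining equation. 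The main obstacle I expect is the second step: correctly propagating the $m$-nonminimality of the \emph{target} through the differentiation of the basic identity to obtain the sharp order $w^m$ (rather than a weaker $O(w)$), since the coefficient $\Theta_{11}$ is no longer a unit and the naive Levi-nondegeneracy shortcut used in \eqref{admissible} is unavailable.
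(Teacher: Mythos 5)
Your plan is correct and is essentially the paper's own argument: the paper's proof of this lemma literally consists of the instruction to argue as in \autoref{specialmap}, which is exactly what you do --- set $\xi=\eta=0$ in the complexified basic identity to get $G(z,0)\equiv 0$, then differentiate in $z$, restrict to $\xi=0$ (so $w=\eta$), and use $m$-nonminimality of the target (every coefficient $\Theta^*_{kl}(\tau)$ vanishes to order at least $m$) to extract the factor $w^m$, with the two nonvanishing conditions coming from the invertibility of the formal Jacobian once $G_z(0,0)=0$ is known. The ``main obstacle'' you flag is in fact a non-issue: no nondegeneracy of $\Theta^*_{11}$ is needed since \emph{every} $\Theta^*_{kl}$ carries the factor $\tau^m$, and the additional factor $\overline{F}(0,\eta)^{\,l}=O(\eta)$ with $l\geq 1$ even yields $G_z=O(w^{m+1})$, which is stronger than the stated $O(w^m)$.
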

Our goal in this subsection is to prove the following
\begin{proposition}\label{initterms}
There exist sectors $S^+,S^-\subset\CC{}$, containing the positive and the negative real lines respectively, directions $d^\pm\subset S^\pm$, a multi-order  $\mathbf{k}=(k_1,...,k_l)$, and functions $F^\pm_j(w),G^\pm_j(w)$ holomorphic in the respective sectors, such that for each $j\geq 0$, the functions $F^\pm_j(w),G^\pm_j(w)$ are the $\mathbf{k}$-multisums of $F_j,G_j$ in the directions $d^\pm$, respectively.
\end{proposition}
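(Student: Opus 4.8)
The plan is to mirror the strategy of Section 3 (culminating in \autoref{systemY} and \autoref{summable}), replacing the second order ODE \eqref{ODE}, which is unavailable here because of \eqref{badcase}, by the associated singular differential equation of high order that is the main new tool of this section. First I would normalize the map by \autoref{specialmap1} and write the complexified basic identity (as in \eqref{basicformal}) for the pair of hypersurfaces \eqref{compd}, namely $G(z,\Theta(z,\xi,\eta))=\Theta^*\bigl(F(z,\Theta(z,\xi,\eta)),\bar F(\xi,\eta),\bar G(\xi,\eta)\bigr)$, and expand both sides in powers of $z$ (treating $(\xi,\eta)$ as the Segre parameter). The two outputs I want to extract from this expansion are a closed \emph{finite} singular system for the lowest coefficient functions, and a \emph{recursion} for the remaining ones.

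For the finite system I would feed the associated high-order ODE $\mathcal E(M)$ and its image $\mathcal E(M^*)$ into the jet-prolongation transformation rule (the high-order analogue of \eqref{trule3}). Collecting the lowest powers of $z$ and scaling the $w$-derivatives by appropriate powers of $w$ (as in \eqref{YY}) should close up, exactly as in \autoref{systemY}, into a meromorphic system $w^\nu\,dY/dw=A(w,Y)$ with $A$ holomorphic at $0$ and $Y$ a finite vector assembled from $F_0,\dots,F_{j_0},G_0,\dots,G_{j_0}$ and their scaled derivatives. \autoref{general-multisum} together with \autoref{real-axis} then yields sectors $S^\pm$ about $\mathbb R^\pm$, directions $d^\pm$ and a multi-order $\bold k$ in which these finitely many coefficient functions are $\bold k$-multisummable; the barred coefficients are brought into the same class by the decoupling device \eqref{meromlong}, exactly as after \autoref{summable}.

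To reach \emph{all} $j$ I would then run a bootstrap on the $z$-expansion of the basic identity. For a fixed integer $c$ (the top $z$-degree occurring in the coefficients of the defining functions), the coefficient of $z^{j}$ in the identity expresses the highest-index unknowns $F_{j+c},G_{j+c}$ through the lower-index ones by a holomorphic operation followed by a division whose denominator has nonvanishing leading term (this is where \eqref{Theta11} and the normalization of \autoref{specialmap1} enter). Since, by \autoref{compo-multi} and the division closure noted right after it, the class of $\bold k$-multisummable series on these multisectors is an algebra stable under such divisions, an induction starting from the finitely many functions of the previous paragraph propagates $\bold k$-multisummability, with the \emph{same} $\bold k$, sectors and directions, to every $F_j,G_j$, which is the assertion.

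The main obstacle I anticipate is the construction and control of the high-order singular ODE in the exceptional regime \eqref{badcase}: one must produce it through the blow-up procedure, write out its prolonged transformation rule (now involving jets of arbitrarily high order), and verify that the resulting finite system genuinely has the irregular-singular normal form \eqref{mr}/\eqref{irreg-sing} with an invertible $\Lambda$ whose eigenvalue arguments avoid the finitely many forbidden directions, so that $\mathbb R^\pm$ are admissible summability directions in \autoref{final-multisum}. A secondary difficulty requiring care is checking, uniformly in $j$, that the divisors appearing in the bootstrap never degenerate, so that the recursion stays inside the multisummable class and the multi-order $\bold k$ can be kept independent of $j$.
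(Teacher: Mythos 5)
Your proposal misses the key idea of the paper's proof, which is not to attack the exceptional case directly but to \emph{reduce it to the generic case of Section 3} via the Lamel--Mir blow-up. The paper's argument runs: (Step I) the statement is invariant under biholomorphisms of the target, by \autoref{compo-multi} and the chain rule; (Step II) one factors $H^{-1}=H_1^{-1}\circ H_2^{-1}$ with $H_1$ biholomorphic, chosen so that the remaining formal factor satisfies $\mbox{ord}_0\,F_0\geq s+1$ as in \eqref{goodord}; (Step III) the substitution $(z,w)=(\xi\eta^s,\eta)$ of \autoref{blowuplemma} then turns $M,M^*$ into hypersurfaces of the form \eqref{admissible} and the map into a \emph{well-defined, invertible} formal map $H_B$ between them, to which \autoref{summable} and the parametrization \eqref{Finally} of the generic case apply; finally \eqref{connecting} and the chain rule transfer the $\bold k$-multisummability back to the original $F_j,G_j$. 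None of this appears in your plan, and the two devices you substitute for it do not work.

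Concretely: (a) the closed meromorphic system $w^{\nu}\,dY/dw=A(w,Y)$ you hope to extract from the high-order prolongation rule is not obtained anywhere in the paper, even under the $k$-nondegeneracy hypothesis \eqref{knondeg}; what the transformation rule \eqref{tangency} actually yields is the Cauchy problem \eqref{newcauchy} in the \emph{two} variables $z,w$, with the coefficient functions entering only as parameters $\alpha,\beta$, and that argument \emph{uses} \autoref{initterms} as an input (to substitute the sectorial $f_j^{\pm},g_j^{\pm}$ into \eqref{finally1}), so your ordering is circular relative to the paper's architecture; moreover the associated high-order ODE \eqref{assocd} only exists after the further normalization of \autoref{p1} and the parameter blow-up \eqref{blowupab} when $k$-nondegeneracy fails. (b) The bootstrap on the $z$-expansion of the basic identity has no finite ``top $z$-degree $c$'': $\Theta^*$ is a full power series in its first argument, so the coefficient of $z^{j}$ mixes all $F_i,G_i$ with $i\leq j$, their derivatives, and the barred series, and there is no shift-by-$c$ recursion. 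Worse, the denominators you would need to invert are controlled by $\Theta_{11}$, and \eqref{Theta11} only asserts $\Theta_{11}\not\equiv 0$; in the exceptional regime \eqref{badcase} one has $\mbox{ord}_0\,\Theta_{11}>m$, so no nonvanishing leading term is available, and the division closure stated after \autoref{compo-multi} (which requires a unit denominator) does not apply. This is precisely the degeneracy that forces the blow-up reduction in the paper's proof.
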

 \autoref{initterms} is proved in several steps.
 
\smallskip

\noindent{\bf Step I.} We first observe that the assertion of \autoref{initterms} is invariant under {\em biholomorphic} transformations of the  target. Indeed, a holomorphic coordinate change
$$z\mapsto U(z,w),\quad w\mapsto V(z,w)$$
in the target changes the components of the map as follows:
\begin{equation}\label{UV}
\widetilde F=U(F(z,w),G(z,w)),\quad\widetilde G=V((F(z,w),G(z,w)).
\end{equation}
The new coefficient functions $\widetilde F_j,\widetilde G_j$ can be computed by differentiating \eqref{UV} in $z$ sufficiently many times and evaluating at $z=0$, i.e. for some germs of holomorphic functions $C_{j_1 \dots j_r , \ell_1 \dots \ell_s} (z,w)$, $D_{j_1 \dots j_r , \ell_1 \dots \ell_s} (z,w)$ we can write  
\[\begin{aligned}
\widetilde F_j (w) &= \sum_{j_1  + \dots + \ell_s =j} C_{j_1 \dots j_r , \ell_1 \dots \ell_s}(F_0 (w), G_0 (w)) {F_{j_1} \dots F_{j_r } G_{\ell_1} \dots G_{\ell_s}}, \\
\widetilde G_j (w) &= \sum_{j_1  + \dots + \ell_s =j} D_{j_1 \dots j_r , \ell_1 \dots \ell_s}(F_0 (w), G_0 (w)) {F_{j_1} \dots F_{j_r } G_{\ell_1} \dots G_{\ell_s}}.
\end{aligned}
   \] 
   Thus the desired invariance property follows from an  application of Corollary~\ref{compo-multi}  (see the 
   discussion of the properties of multisummable functions in Section 2). 
\smallskip

\noindent{\bf Step II.} In this step, we make use of the following efficient blow-up procedure introduced in \cite{lmblowups} by Mir and the second author. 
\begin{lemma}[Blow-up Lemma, see \cite{lmblowups}]\label{blowuplemma}
Let $M\subset\CC{2}$ be a real-analytic hypersurface, which is Levi-degenerate at the origin and Levi-nonflat. Assume that  $M$ is given in coordinates \eqref{compd} and that the distinguished curve 
\begin{equation}
\Gamma=\{(z,w)\in M:\,\,z=0\}\subset M 
\end{equation}
does not contain Levi-degenerate points of $M$ other than the origin. Then there exists  a blow-down map 
\begin{equation}\label{lmblowup}
B(\xi,\eta):\quad (\CC{2},0)\lr (\CC{2},0),\quad B(\xi,\eta)=(\xi\eta^s,\eta),\quad s\in\mathbb{Z},\quad s\geq 2,
\end{equation}
and a real-analytic nonminimal at the origin hypersurface $M_B\subset\CC{2}_{(\xi,\eta)}$ with the complex locus $X=\{\eta=0\}$ such that:

\smallskip

(i) $B(M_B)\subset M,\quad B(X)=\{0\}$;

\smallskip

(ii) $M_B\setminus X$ is Levi-nondegenerate, and $M_B$ is given by an equation of the kind \eqref{admissible}.
\end{lemma}
We note at this point that the condition for $\Gamma$ in \autoref{blowuplemma} is precisely equivalent to \eqref{Theta11}.

We will need some control of the integer $s$ from the proof of the Blow-up Lemma. We quickly recall the needed details. For an $m$-nonminimal hypersurface,  transformations bringing to coordinates of the kind \eqref{compd} are associated with curves $\gamma\subset M$ passing through $0$ and transverse to the complex tangent at $0$. Such a curve $\gamma$ is being transformed into the distingusihed \eqref{Gamma} in the new coordinates \eqref{mnonminimal}. 

We then choose $\gamma$ in such a way that $\gamma\cap \Sigma=\{0\}$ for the Levi degeneracy set $\Sigma\subset M$, and bring to coordinates \eqref{compd}. This means that for the resulting hypersurface \eqref{compd} we have $\Theta_{11}\not\equiv 0$. For each $k\geq 2$, let us denote $$m(k):=\mbox{min}_{p+q=k}\,\mbox{ord}_0\Theta_{pq}.$$ 
We have $m(j)\geq m$ for all $j\geq 2$. 
After that, an integer $s$ in \eqref{lmblowup} is {determined} as any integer satisfying all the inequalities 
\begin{equation}\label{ineq}
2s+m(2)\leq ks+m(k),\quad k\geq 3.
\end{equation}
In fact, one can require the unique  (stronger) inequality
\begin{equation}\label{ineq1}
m(2)< s,
\end{equation}
and thus avoid considering $m(k),\,k\geq 3$.

We now proceed as follows.  We may assume that both $M$ and $M^*$ are given by coordinates \eqref{mnonminimal} with $\Theta_{11}\not\equiv 0$. We then fix an integer $s$, which satisfies \eqref{ineq1} for both $M$ and $M^*$. Next, we consider the formal curve $\gamma\subset M$ - the pre-image of \eqref{Gamma} under the given formal map $H=(F,G)$. Let us choose an {\em analytic} curve $\tilde \gamma\subset M$ tangent to $\gamma$ to order $s+1$, and a {\em biholomorphic} map $H_1$ transforming  $\tilde\gamma$ into \eqref{Gamma} and 
$M$ into a hypersurface $\widetilde M$ of the kind \eqref{mnonminimal}. Put $H_2:=H\circ H_1^{-1}$, so that $H=H_{2}\circ H_{1}$. Finally, put
$$\tilde \Gamma:=H(\tilde \gamma).$$
Note that, since $\gamma$ and $\tilde \gamma$ are tangent to order $s+1$, the same is true for \eqref{Gamma} and $\tilde\Gamma$.
 
We then can decompose $H^{-1}$ as a product 
\begin{equation}\label{HHH}
H^{-1}=H^{-1}_1\circ H^{-1}_2,
\end{equation}
where $H^{-1}_1$ is a {\em biholomorphic} map transforming  \eqref{Gamma} into $\tilde\gamma$  and $\widetilde M$ into $M$, and
$H^{-1}_2$ is a {\em formal} invertible map transforming  $\tilde \Gamma$ into \eqref{Gamma} and $M^*$ into the real-analytic hypersurface $\widetilde M$. Importantly, in view of the tangency condition, the formal map $H^{-1}_2$ satisfies 
\begin{equation}\label{goodord}
\mbox{ord}_0\,F_0(w)\geq s+1,
\end{equation}
where $F_0$ is as in \eqref{initmap}. Moreover, the blow up integer $s$ can be kept the same as before for the hypersurface $\widetilde M$ as well. Indeed, a transformation satisfying \eqref{goodord} clearly preserves the  corresponding integer $m^*(2)$ in \eqref{ineq1}(as we  choose $s>m^*(2)$), so that the inequalities \eqref{ineq1} still hold true for the same $s$ and the hypersurface $\widetilde M$.  

Finally, we recall that, in view of the considerations of Step I, the assertion of \autoref{initterms} applied for $H^{-1}_2$ is equivalent to that for $H^{-1}$. 

We summarize the considerations of Step II as follows: in view of the decomposition \eqref{HHH} and the subsequent properties of $H^{-1}_2$, 

\smallskip

{\em it is sufficient to prove \autoref{initterms} for maps $(F,G)$ satisfying, in addition, the inequality \eqref{goodord}}.   

\bigskip

\noindent{\bf Step III.} In this step, we are finally able to reduce \autoref{initterms} to the results already proved in the generic case. For that, we use the above blow up procedure.

In accordance with the outcome of the previous step, we consider a map $(F,G):\,(M,0)\mapsto (M^*,0)$ satisfying, in addition, \eqref{goodord}. Here the integer $s$ in \eqref{goodord} is an admissible integer for the blow down map \eqref{lmblowup} both in the source and in the target. After performing the blow ups (with the integer $s$ in \eqref{lmblowup}), we obtain real-analytic hypersurfaces $M_B, M^*_B$, respectively.

Re-calculating the map $(F,G)$ in the "blown up" coordinates $(\xi,\eta)$ gives:
\begin{equation}\label{newmap}
G_B(\xi,\eta)=G(\xi\eta^s,\eta), \quad F_B(\xi,\eta)=\frac{F_0(\eta)}{\eta^s}+F_1(\eta)\xi+\cdots,
\end{equation}
where dots stand for a power series in $\xi,\eta$ of the kind $O(\xi^2)$. In view of \eqref{goodord}, $F_B(\xi,\eta),\,G_B(\xi,\eta)$ are {\em well defined power series}. It is immediate then that the formal map 
$$H_B(\xi,\eta):=\bigl(F_B(\xi,\eta),\,G_B(\xi,\eta)\bigr)$$
transforms $(M_B,0)$ into $(M^*_B,0)$.
Furthermore, in view of \eqref{normalmap1}, the formal map $H_B(\xi,\eta)$ is {\em invertible}, so that the results of Section 3 are applicable to it. Expanding now
$$F_B(\xi,\eta)=\sum_{j\geq 0}F^B_j(\eta)\xi^j,\quad  
G_B(\xi,\eta)=\sum_{j\geq 0}G^B_j(\eta)\xi^j,$$
and applying to $H_B$ the assertion of \autoref{summable} and the formulas \eqref{Finally}, we immediately obtain for the components $F^B_j,G^B_j$ the desired $\mathbf{k}$-summability property (identical to the one stated in \autoref{initterms}). At the same time, the relations \eqref{newmap} show that
\begin{equation}\label{connecting}
G^B_j(\eta)=\eta^{sj}G_j(\eta).
\end{equation}
We immediately obtain from \eqref{connecting} the assertion of \autoref{initterms} for the components $G_j$ (with the same sectors, multi-directions and multi-order $\mathbf{k}$ as for $F_B,G_B$). Finally, since we have
$$ F(\xi\eta^s,\eta)=\bigl(G(\xi\eta^2,\eta)\bigr)^s\cdot F_B(\xi,\eta),$$
the chain rule and the multisummability property for $F^B_j,G_j$ imply the assertion of \autoref{initterms} for the components $F_j$. This finally proves \autoref{initterms}.

\qed

\subsection{Associated ODEs of high order} In this section we consider the case when the source and the target $m$-nonminimal hypersurfaces satisfy the additional {\em $k$-nondegeneracy} condition. The latter means that for some $k\geq 1$ we have 
\begin{equation}\label{knondeg}
\mbox{ord}_0\,\Theta_{k1}=m
\end{equation}
for the defining function \eqref{compd}. As a well known fact (e.g. \cite{meylan}) the property of being $m$-nonminimal $k$-nondegenerate is  invariant under (formal) invertible transformations. In view of \eqref{badcase}, we may assume that $k\geq 2$ in our setting.

The main goal of this section is to show that we can associate  a {\em system $\mathcal E(M)$ of  $k$ singular ODEs of orders $\leq k+1$} to  an $m$-nonminimal $k$-nondegenerate hypersurface $M$.  By the latter we mean (as in the generic case) that all the Segre varieties $Q_p$ of $M$ for $p\not\in X$ considered as graphs $w=w_p(z)$ satisfy the system of ODEs $\mathcal E(M)$ as follows: 

\begin{proposition}
Let $M\subset \C^2 $ be an $m$-nonminimal $k$-nondegenerate hypersurface. Then there exists a 
system of holomorphic ODEs $\mathcal E (M)$, called the associated system to $M$,  of the form 
 \begin{equation}\label{assocd}
w'=\Phi_1\left(z,w,\frac{w^{(k)}}{w^m}\right),\cdots,w^{(k-1)}=\Phi_{k-1}\left(z,w,\frac{w^{(k)}}{w^m}\right),\,\,\,w^{(k+1)}=\Phi\left(z,w,\frac{w^{(k)}}{w^m}\right)
\end{equation}
such that  $w = w_p (z)$ is a solution of $\mathcal E (M)$ for $p\notin X$ and 
\begin{equation}\label{factord}
\Phi_j = O(w^m\zeta), j=1, \dots, k .
\end{equation} Furthermore, we 
can assume that $\Phi_1 (z,w,\zeta)$ satisfies
\[ {\text{Property }} (*): \quad \dopt{^{k+s}\Phi_1}{w^m \zeta^k} (0) =k!s! (\pm i) . \]
 If $M^*$ is another such hypersurface, than 
 any formal map $(F,G)$ taking $M$ 
 into $M^*$ satisfies \eqref{normalmap1} and  transforms the system 
$\mathcal E(M)$ into $\mathcal E(M^*)$. 
\end{proposition}

For producing the associated ODEs, we consider the Segre family of an $m$-nonminimal hypersurface \eqref{compd} satisfying the additional $k$-nondegeneracy condition, and produce for it an elimination procedure, in the spirit of that discussed in Section 2. This Segre family looks as:
\begin{equation}\label{segrefam}
w=b+ O(ab^mz)
\end{equation}
(we use the notation $p=(\bar a,\bar b)$). Differentiating \eqref{segrefam} $k$ times in $z$  and using \eqref{knondeg}, we obtain:
\begin{equation}\label{segredif}
w^{(k)}=ab^m(\alpha +o(1)), \quad \alpha\neq 0
\end{equation}
(here $\alpha$ is a fixed constant).
Dividing \eqref{segredif} by the $m$-th power of \eqref{segrefam} gives:
\begin{equation}\label{segredif1}
\frac{w^{(k)}}{w^m}=\alpha a+o(a).
\end{equation}
Solving the system \eqref{segredif1},\eqref{segrefam} for $a,b$ by the implicit function theorem  yields
\begin{equation}\label{solved}
a=A\left(z,w,\frac{w^{(k)}}{w^m}\right), \quad b=B\left(z,w,\frac{w^{(k)}}{w^m}\right)
\end{equation}
for two holomorphic near the origin in $\CC{3}$ functions $A(z,w,\zeta), B(z,w,\zeta)$ with $A=O(\zeta)$ and $B= O(w)$. Differentiating then \eqref{segrefam} $j$ times for each $j=1,...,k-1,k+1$ and substituting  \eqref{solved} into the results finally gives us \eqref{assocd}. 
Note that $\Phi_1(z,w,\zeta),...,\Phi_{k-1}(z,w,\zeta),\Phi(z,w,\zeta)$ are all holomorphic near the origin in $\CC{3}$ functions which satisfy \eqref{factord}
(as follows from the elimination procedure). 
It is immediate, in the same way as in the nondegenerate case, that {\em all the Segre varieties $Q_p$ of $M$ for $p\not\in X$ considered as graphs $w=w_p(z)$ satisfy the system of ODEs $\mathcal E(M)$.}

We now turn to property (*) for the ODE system \eqref{assocd}. For obtaining it, let us recall that defining equations \eqref{compd} of hypersurfaces under consideration satisfy the reality condition:
 \begin{equation}\label{reality}
w\equiv \Theta(z,\bar z,\bar\Theta(\bar z,z,w))\,\,\, \forall z,\bar z,w
\end{equation} 
(see, e.g., \cite{ber}). Gathering in \eqref{reality} terms with $z^k\bar z^1$ and using \eqref{compd}, we obtain
$$0=\Theta_{k1}(w)+\bar\Theta_{1k}(w).$$
Hence we have, in view of \eqref{knondeg}:
\begin{equation}\label{1k}
\mbox{ord}_0\,\Theta_{1k}=m.
\end{equation}
It immediately follows then from the above elimination procedure that 
the term with $z^0w^m\zeta^{k}$ in the expansion of the function $\Phi_1$ in \eqref{assocd} is non-zero,  and without loss of generality, we assume its coefficient in what follows to be equal to $\pm i$ (even though its exact value
is of no special interest to us) and hence Property (*) holds.  





\smallskip

\subsection{Proof of the main theorem under the $k$-nondegeneracy assumption}

The main technical difficulty of this
 subsection is to provide an analogue of \eqref{findfg} for mappings
between hypersurfaces which satisfy the $ m $-nonminimal $ k $-nondegeneracy assumptions. 
We will   expand such a map $ H=(F,G) $ in this section as 
\begin{equation}\label{expd}
\begin{aligned}
F&=z+S(z,w)+\sum_{j=0}^k f_j(w)z^j+f(z,w),& \quad \\
G&=T(w)+w^mR(z,w)+wg_0(w)+w^m\sum_{j=1}^k g_j(w)z^j+w^mg(z,w), \\
S_z&(0,0)=0, \quad f(z,w)=O(z^{k+1}),\quad g(z,w)=O(z^{k+1}),
\end{aligned}
 \end{equation}
where $f_j,g_j,f,g$ are formal power  series, $f_j,g_j$ all vanish to order $k+1$, and $T(w),S(z,w),R(z,w)$ are certain  { \em fixed polynomials} in their variables, exact form of which is of no interest to us (the desired representation of $g$ is possible in view of \eqref{normalmap1}). We think about $f_j,g_j$ and their derivatives as ``additional parameters'', and for this purpose, we write  
$$\alpha_{ij}:=f_i^{(j)}(w), \quad \beta_{ij}:=g_i^{(j)}(w), \quad \bold\alpha=\{\alpha_{ij}\}, \quad  \bold\beta=\{\beta_{ij}\}, \quad 0\leq i\leq k, \quad 0\leq j\leq k+1.$$
 We 
will see (through a careful analysis of the transformation rules for the 
associated systems) that we can find holomorphic functions $ U $, $ V $, such that  
\begin{equation}\label{newcauchy}
\begin{aligned}
f_{z^{k+1}}=U\Bigl(z,w,j^k(f,g),\{f_{z^{k+1-j}w^{j}}\}_{j=1}^{k+1},\{g_{z^{k+1-j}w^{j}}\}_{j=1}^{k+1},
\alpha,\beta \Bigr),\\
g_{z^{k+1}}=V\Bigl(z,w,j^k(f,g),\{f_{z^{k+1-j}w^{j}}\}_{j=1}^{k+1},\{g_{z^{k+1-j}w^{j}}\}_{j=1}^{k+1},
\alpha,\beta \Bigr).
\end{aligned}
\end{equation}

Let us first show how \autoref{main} follows (in the $k$-nondegenerate case) from \eqref{newcauchy}.

\begin{proof}[Proof of \autoref{main} under the $k$-nondegeneracy assumption]

Solving \eqref{newcauchy} by the Ovcyannikov theorem (see Section 3) as a Cauchy problem with the initial data
$$f_{z^j}(0,w)=g_{{z}^j}(0,w)=0, \quad 0\leq j\leq k$$
and the additional parameters $\alpha,\beta$, 
we obtain:
\begin{equation}\label{finally1}
\begin{aligned}
f(z,w)=\varphi\bigl(z,w,\alpha,\beta\bigr), \quad g(z,w)=\psi\bigl(z,w,\alpha,\beta\bigr)
\end{aligned}
\end{equation}
for two functions $\varphi,\psi$, holomorphic in all their arguments. We recall now that, by definition, $\alpha$ and $\beta$ stand for formal power series {\em without constant term}, so that substituting back $f_i^{(j)}(w)$ for $\alpha_{ij}$ and $g_i^{(j)}(w)$ for $\beta_{ij}$ is well defined. 

Let us note finally that, combining \autoref{initterms} and the expansion \eqref{expd}, we may apply the assertion of \autoref{initterms} to the functions $f_0,...,f_k,g_0,...,g_k$.  Substituting the arising sectorial functions $f^\pm_j,g^\pm_j$ into \eqref{finally1}, we obtain sectorial holomorphic transformations $(F^{\pm},G^{\pm})$. Then, arguing identically to the proof of \autoref{main} in the generic case (end of Section 3), we obtain the assertion of \autoref{main} in the $k$-nondegenerate case.
\end{proof}

In what follows, we have to take into consideration the space $J^{k+1}(\CC{},\CC{})$ of $(k+1)$-jets of holomorphic maps from $\CC{}$ into itself. We use the notations
$$(z,w,w_1,...,w_{k+1})$$
for the coordinates in the jet space (here $w_j$ corresponds to the derivative $w^{(j)}(z)$). A system 
\eqref{assocd} shall be regarded then as a submanifold in $J^{k+1}(\CC{},\CC{})$ of dimension $3$ (with the local coordinates $z,w,w_k$):
\begin{equation}\label{submd}
w_1=\Phi_1\left(z,w,\frac{w_k}{w^m}\right),\cdots,w_{k-1}=\Phi_{k-1}\left(z,w,\frac{w_k}{w^m}\right),\,\,\,w_{k+1}=\Phi\left(z,w,\frac{w_k}{w^m}\right).
\end{equation}

Next, we consider the $(k+1)$-jet prolongation 
\begin{multline}
H^{(k+1)}(z,w,w_1,...,w_{k+1})=\\
=\bigl(F(z,w),G(z,w),G^{(1)}(z,w,w_1),G^{(2)}(z,w,w_1,w_2),...,G^{(k+1)}(z,w,w_1,...,w_{k+1})\bigr)
\end{multline}
of the map $H=(F,G)$. 
Introducing the total derivation operator
\begin{equation}\label{DD}
D:=\partial_z+w_1\partial_w+\sum_{j\geq 1} w_{j+1}\partial_{w_j},
\end{equation}
we can inductively compute the components of the prolonged map (see  \cite{bluman}[(3.96d) of section 2.3.1]) as
\begin{equation}\label{findGk}
G^{(j)}=\frac{DG^{(j-1)}}{DF},\,\,j\geq 1,\quad\mbox{where}\quad G^{(0)}:=G.
\end{equation}
Note that, in fact, $$DF=F_z+w_1G_w.$$ 
As follows from \eqref{findGk}, each $G^{(j)}(z,w,w_1,...,w_j)$ is an expression, {\em rational} in the first jet variable $w_1$ and {\em polynomial} in the remaining jet variables $w_2,...,w_j$; its coefficients are {\em universal polynomials} in the $j$-jet of $(F,G)$. For certain precise values of $j$ (e.g. $j=1,2,3$), the $j$-jet prolongation formulas can be written explicitly. For example, we have:
\begin{equation}\label{G23}
\begin{aligned}
G^{(1)}(z,w,w_1)&=\frac{G_z+w_1G_w}{F_z+w_1F_w},\\
G^{(2)}(z,w,w_1,w_2)&=\frac{1}{(F_z+w_1F_w)^3}\Bigl[(F_z+w_1F_w)(G_{zz}+2w_1G_{zw}+(w_1)^2G_{ww}+w_2G_w)-\\
&-(G_z+w_1G_w)(F_{zz}+2w_1F_{zw}+(w_1)^2F_{ww}+w_2F_w)
\Bigr].
\end{aligned}
\end{equation}
For some higher orders see, e.g., \cite{bluman}. However, for a general $j$, only  certain summation formulas exist, which can not always be worked out. That is why we will use only a few properties of the prolonged maps, which are useful for our consideration. For example, we can claim that, for maps of the kind \eqref{normalmap1}, the denominator of it is non-vanishing at $z=w=w_1=...=w_j=0$. This can be easily proved by induction, by using \eqref{findGk}  and the fact that $F_z(0,0)=1$.

According to the outcome of the previous section and the discussion in Section 2, the prolonged map $H^{(k+1)}$ transforms  the submanifolds $\mathcal E(M),\mathcal E(M^*) \subset J^{k+1}(\CC{},\CC{})$ into each other. That is, we have the following basic identity (we set $W_k:=(w_1,...,w_k)$):
\begin{equation}\label{tangency}
\begin{aligned}
 G^{(1)}(z,w,w_1)&=\Phi^*_1\left(F(z,w),G(z,w),\frac{G^{(k)}(z,w,W_k)}{G^m(z,w)}\right),\\
&\cdots\\
 G^{(k-1)}(z,w,w_1,\ldots,w_{k-1})&=\Phi^*_{k-1}\left(F(z,w),G(z,w),\frac{G^{(k)}(z,w,W_k)}{G^m(z,w)}\right),\\
 G^{(k+1)}(z,w,w_1,\ldots,w_{k+1})&=\Phi^*\left(F(z,w),G(z,w),\frac{G^{(k)}(z,w,W_k)}{G^m(z,w)}\right), \\
\end{aligned}
\end{equation}
subject to the restriction 
\begin{equation}\label{restrd}
  w_1=\Phi_1\left(z,w,\frac{w_k}{w^m}\right),\cdots,w_{k-1}=\Phi_{k-1}\left(z,w,\frac{w_k}{w^m}\right),\,\,\,w_{k+1}=\Phi\left(z,w,\frac{w_k}{w^m}\right)
 \end{equation}
(here we used the star notation for the target ODE system).
We claim that, by setting 
\begin{equation}\label{zeta}
\zeta:= \frac{w_k}{w^m},
\end{equation} 
we can understand \eqref{tangency} as an identity of formal power series in the {\em independent} variables $z,w,\zeta$. 

Indeed, we first note that the substitution $w_k=w^m\zeta$ makes all expressions in \eqref{restrd} power series in $z,w,\zeta$ (divisible by $\zeta$, in view of \eqref{factord}). Further, we
consider the singular expression $\frac{G^{(k)}(z,w,w_1,..,w_k)}{G^m(z,w)}$ in \eqref{tangency} as a ratio of two formal power series $P(z,w,w_1,...,w_k),Q(z,w,w_1,...,w_k)$, each of which is polynomial in $w_1,...,w_k$. The denominator $Q$ can be factorized as $w^m\cdot \widetilde Q(z,w)$ with $\widetilde Q(0,0)\neq 0$ (as follows from \eqref{normalmap}).
Next,  the ``constant'' term of the polynomial $P$ obtained by setting $w_j=0$ for all $j$, can be inductively computed using the scheme
\begin{equation}\label{cterm}
c_1=\frac{G_z}{F_z},\quad c_j=\frac{\partial_z (c_{j-1})}{F_z},\quad  2\leq j \leq k+1
\end{equation}
(as follows from \eqref{findGk}), and it follows then from \eqref{normalmap1} that the desired constant term $c_k(z,w)$ is divisible by $w^m$. All the other terms in $P$ are (i) either divisible by $w_k$, hence the substitution $w_k=w^m\zeta$ makes them divisible by $w^m$, or (ii) divisible by some $w_j$, $j=1,...,k-1$, and hence the substitution $w_j=\Phi_j(z,w,w_1,...,w_j)$ makes them divisible by $w^m$ (in view of \eqref{factord}). We conclude that $P$ subject to the restriction \eqref{restrd} is divisible by $w^m$ (after  the substitution $w_k=w^m\zeta$), and this proves the claim.


We use the following

\smallskip

\noindent{\bf Convention.} In what follows, 
$$h_{z^kw^l}$$
denotes the partial derivative $\frac{\partial^{k+l}}{\partial z^k\partial z^l}$ for a function $h(z,w)$.

\smallskip

We then consider the last equation in \eqref{tangency} subject to \eqref{restrd} as an identity in $z,w,\zeta$ and collect within it all terms with $\zeta^0$. Then:

\smallskip

(i) in the left hand side, we obtain the expression $c_{k+1}(z,w)$ from \eqref{cterm}; it is easy to see that this expression can be written as 
$$\frac{1}{(F_z)^{k+1}}\bigl(G_{z^{k+1}}\cdot F_z-F_{z^{k+1}}\cdot G_z+\cdots\bigr),$$
where dots stand for a polynomial in $F_z,F_{zz},...,F_{z^k},G_z,g_{zz},...,G_{z^k}$; substituting \eqref{expd}, we obtain 
$$w^m\bigl(g_{z^{k+1}}\cdot(1+A)+f_{z^{k+1}}\cdot B+C\bigr),$$
where $A,B,C$ are {\em holomorphic} expressions in $j^kf,j^kg,z,w,\alpha,\beta$, and $A,B$ vanish at the origin. (In fact, $A,B,C$ have more specific form, but we do not need these further details);

\smallskip

(ii) for the right hand side, we argue as above and conclude that, for the singular argument  $\frac{G^{(k)}(z,w,w_1,..,w_k)}{G^m(z,w)}$, evaluating $\zeta=0$ and  substituting \eqref{expd} makes the numerator divisible by $w^m$. Taking further \eqref{factord} into account, we conclude that the right hand side in the identity under consideration as well has the form 
$$w^m\widetilde C,$$
where $\widetilde C$ is an  expression, {\em holomorphic}  in $j^kf,j^kg,z,w,\alpha,\beta$. 

We summarize that, gathering in the last identity in \eqref{tangency} terms with $\zeta^0$ gives:
\begin{equation}\label{firsteq}
g_{z^{k+1}}\cdot (1+A)+f_{z^{k+1}}\cdot B=\widehat C,
\end{equation}
where $A,B,\widehat C$ are holomorphic expressions as above, and $A,B$ vanish at the origin.

It remain for us to obtain one more identity of the kind \eqref{firsteq}, solvable already in $f_{z^{k+1}}$. For doing so, we consider in the last identity in \eqref{tangency} (subject to restriction \eqref{restrd}) terms with $\zeta^k$. 

\medskip

\noindent{\bf Claim.} The result of collecting terms with $\zeta^k$ in \eqref{tangency} (subject to restriction \eqref{restrd})
can be written in the form
\begin{equation}\label{secondeq}
-f_{z^{k+1}}\cdot\bigl(\pm i +L_0\bigr)+\sum_{j=1}^{k+1} L_j\cdot f_{z^{k+1-j}w^{j}}+\sum_{j=0}^{k+1} M_j\cdot g_{z^{k+1-j}w^{j}}+N=\widetilde N,
\end{equation}
where  the expressions $L_j,M_j,N,\widetilde N$ are described identically to the expressions $A,B,\widehat C$ in \eqref{firsteq} and, moreover,  $L_0$ vanishes at the origin.

\medskip

To prove the claim, we have to analyze the jet prolonged component $G^{(k+1)}$ with more details. Recall that $G^{(k+1)}$ is a rational in $w_1$ and polynomial in $w_2,...,w_{k+1}$ expression, coefficients of which are certain universal polynomials in $j^{k+1}(F,G)$. Its denominator is nonvanishing for $z=w=w_1=...=w_{k+1}$, as discussed above. Hence, we may expand
 \begin{equation}\label{expandGk}
G^{(k+1)}=\sum_{l_1,...,l_{k+1}\geq 0} E_{l_1,...,l_{k+1}}(w_1)^{l_1}\cdots(w_{k+1})^{l_{k+1}},
\end{equation}
where $E_{l_1,...,l_{k+1}}$ are all certain universal polynomials in $j^{k+1}(F,G)$ and the ratio $\frac{1}{F_z}$ (the latter fact can be seen from \eqref{findGk}, induction in $k$ and the chain rule). Recall that the constant term $E_{0,...,0}$ can be computed via \eqref{cterm}. For all the other terms, we have to distinguish $E_{l_1,...,l_{k+1}}$ depending on the {\em highest order derivatives $F_{z^{p}w^{q}},G_{z^{p}w^{q}},\,p+q=k+1$. In this regard, we have
\begin{lemma}\label{highord}
The only coefficients $E_{l_1,...,l_{k+1}}$ in \eqref{expandGk} depending on the  highest order derivatives $F_{z^{p}w^{q}},G_{z^{p}w^{q}},\,p+q=k+1$ are $E_{l,0,...,0},\, l\geq 0$. Moreover, $E_{1,0,...,0}$ has the form identical to the left hand side of
\eqref{secondeq}, where the expressions $L_j,M_j,N$ are certain universal polynomials in $j^k(F,G)$ and $\frac{1}{F_z}$, 
and, in addition, $L_0$ vanishes when $j^k(F,G)$ is evaluated at $z=w=0$.
\end{lemma}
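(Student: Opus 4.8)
The plan is to run an induction on the prolongation order, organized by the filtration that records the total differentiation order in $F,G$ carried by each jet-monomial coefficient. Throughout I use the recursion \eqref{findGk}, $G^{(j)}=DG^{(j-1)}/DF$ with $D$ as in \eqref{DD} and $DF=F_z+w_1F_w$, together with the single structural fact that $D=P+Q$, where $P:=\partial_z+w_1\partial_w$ raises the $F,G$-differentiation order by one, while $Q:=\sum_{i\ge 1}w_{i+1}\partial_{w_i}$ leaves it unchanged but multiplies by some $w_{i+1}$ with $i+1\ge 2$. I call a coefficient \emph{top order} in $G^{(j)}$ if it involves a derivative $\partial_z^p\partial_w^qF$ or $\partial_z^p\partial_w^qG$ with $p+q=j$.

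For the first assertion I prove by induction on $j$ that in the expansion of $G^{(j)}$ every monomial $w_1^{l_1}\cdots w_j^{l_j}$ with $(l_2,\dots,l_j)\neq 0$ has a coefficient of order at most $j-1$; equivalently, order-$j$ coefficients sit only on pure powers $w_1^{l}$. The base cases $j=0,1$ are immediate from $G^{(0)}=G$ and \eqref{G23}. For the step, $PG^{(j)}$ produces order-$(j+1)$ coefficients only by letting $\partial_z,\partial_w$ act on the order-$j$ coefficients of $G^{(j)}$, which by hypothesis live on pure $w_1^{l}$; since $P$ either keeps the monomial or multiplies it by $w_1$, the new top-order coefficients remain on pure powers of $w_1$. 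The term $QG^{(j)}$ carries a factor $w_{i+1}$ with $i+1\ge 2$ and does not raise the order, so it contributes only sub-top-order coefficients. Finally, division by $DF$ — whose reciprocal expands as $F_z^{-1}\sum_n(-w_1F_w/F_z)^n$, a pure-$w_1$ series of order-one coefficients — preserves the pure-$w_1$ location of the top-order part. This is exactly the assertion that the order-$(k+1)$ derivatives appear only in the coefficients $E_{l,0,\dots,0}$.

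For the second assertion I isolate $e_j:=E^{(j)}_{1,0,\dots,0}=[\partial_{w_1}G^{(j)}]_0$ and $c_j:=[G^{(j)}]_0$, where $[\,\cdot\,]_0$ denotes evaluation at $w_1=\dots=0$. Using the commutator identity $[\partial_{w_1},D]=\partial_w$ (the only noncommuting piece of $D$ is $w_1\partial_w$) together with $[QX]_0=0$ and $[PX]_0=\partial_z[X]_0$, evaluation of \eqref{findGk} gives $c_{j+1}=\partial_zc_j/F_z$ (recovering \eqref{cterm}) and the clean recursion
\begin{equation*}
e_{j+1}=\frac{1}{F_z}\bigl(\partial_z e_j+\partial_w c_j-F_w\,c_{j+1}\bigr),\qquad e_0=0.
\end{equation*}
An induction on this recursion shows that for $j\ge 2$ both $c_j$ and $e_j$ are affine-linear in the order-$j$ derivatives, with coefficients universal polynomials in $j^{\,j-1}(F,G)$ and $1/F_z$ (here $1/F_z$ is a legitimate order-one coefficient since $j\ge 2$): each step raises the order by exactly one, and the only operations involved ($\partial_z$, $\partial_w$, multiplication by $F_w$, division by $F_z$) act linearly on the top derivatives. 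Reading off the order-$(k+1)$ part of $e_{k+1}$ then yields precisely the collection of terms $\partial_z^{k+1}F$, $\partial_z^{k+1-j}\partial_w^{j}F$ and $\partial_z^{k+1-j}\partial_w^{j}G$ appearing on the left-hand side of \eqref{secondeq}, with coefficients $L_j,M_j$ of the claimed type and a top-derivative-free remainder $N$; rewriting $F,G$ through \eqref{expd} converts $\partial^{k+1}F,\partial^{k+1}G$ into $\partial^{k+1}f,\partial^{k+1}g$ up to known polynomial terms absorbed into $N$.

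It remains to analyze the coefficient of $\partial_z^{k+1}f$ and to locate the constant $\pm i$. Tracing the $\partial_z^{k+1}F$-coefficient through the recursion (together with the analogous coefficient in $c_j$) shows it is a universal polynomial in $j^k(F,G)$ and $1/F_z$ whose value at $z=w=0$ is computed from the normalization \eqref{normalmap1}, where $F_z=1$, $G_z=O(w^m)$ vanishes and $G_w=1$. The genuinely imaginary constant $\pm i$ is not internal to $E_{1,0,\dots,0}$; it is produced when $E_{1,0,\dots,0}w_1$ is fed into the last equation of \eqref{tangency} and the coefficient of $\zeta^k$ is extracted, since by Property~$(*)$ the series $\Phi_1$ contributes its leading monomial $\pm i\,w^m\zeta^k$. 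After the standard division by $w^m$ the leading constant becomes $\pm i$, while every remaining contribution — the higher-order part of $[\Phi_1]_{\zeta^k}$, the terms coming from $E_{l,0,\dots,0}$ with $l\ge 2$ (which carry an extra factor $w^{m(l-1)}$), and the non-constant part of the $\partial_z^{k+1}f$-coefficient — vanishes at $z=w=0$ and is collected into $L_0$. I expect this last step to be the main obstacle: establishing the affine-linearity and correctly tracking every coefficient through the recursion, and in particular verifying the vanishing of $L_0$ at the origin, where \eqref{normalmap1} must be used with care. The order-localization of the first assertion is exactly what guarantees that no uncontrolled top-order contribution slips in from monomials involving $w_2,\dots,w_{k+1}$.
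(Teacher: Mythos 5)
Your proof is correct and follows essentially the same route as the paper, which disposes of the lemma by induction on the prolongation order via \eqref{findGk}, the chain rule, and ``straightforward inspection.'' Your decomposition $D=P+Q$ and the explicit recursion $e_{j+1}=\frac{1}{F_z}\bigl(\partial_z e_j+\partial_w c_j-F_w c_{j+1}\bigr)$ simply make that inspection concrete (and correctly place the constant $\pm i$ as coming from Property $(*)$ of $\Phi_1$ rather than from $E_{1,0,\ldots,0}$ itself); the only detail left implicit, that the $\partial_z^{k+1}F$-coefficient of $e_{k+1}$ equals $-1$ at the origin, follows at once from your recursion together with $F_z(0,0)=G_w(0,0)=1$ and $G_z(0,0)=0$.
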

\begin{proof}
For $k=0$, the assertion follows from the formula
\eqref{G23}
and the chain rule. For  $k>0$, we apply the iterative formula \eqref{findGk}, induction in $k$ and the chain rule. 
Then the assertion of the lemma follows by a straightforward inspection. 
\end{proof}
\rm
 We immediately conclude that, when collecting terms with $\zeta^k$ in the left hand side of the last identity in \eqref{tangency}, highest order derivatives may arise only from terms with $(w_1)^l,\,l\geq 1$. Next, we note that the term $E_{1,0,...,0}\cdot w_1$, subject to constraint \eqref{restrd}, contributes 
$$w^m(\pm i +o(1))\cdot E_{1,0,...,0}$$
(as follows  from the Property $(*)$ of $\Phi_1$). Substituting the expansions \eqref{expd} for $F,G$, we obtain an expression of the kind \eqref{secondeq} multiplied by $w^m$. Further, the constant term $E_{0,...,0}$ does not contribute to $\zeta^k$ (as it doesn't depend on the $w_j$'s). All terms with $(w_1)^l,\,l\geq 2$ may contribute to $\zeta^k$, however, in view of the factorization property \eqref{factord} their contribution gives at least the factor $O(w^{2m})$ in front of the highest order derivatives. All other terms do {\em not} contribute to $\zeta^k$ with the highest order derivatives, as follows from \autoref{highord}. They, however, necessarily give the factor $w^m$, in view of \eqref{factord},\eqref{zeta}.

We finally conclude that the left hand side of the identity under discussion has the form identical to the left hand side in  \eqref{secondeq} multiplied by $w^m$.

To study the right hand side of the last identity in \eqref{tangency} subject to \eqref{restrd}, we recall that 

\smallskip

(i) the denominator of the singular argument $\frac{G^{(k)}(z,w,w_1,..,w_k)}{G^m(z,w)}$ has the form $w^m\cdot\widetilde G(z,w)$, $\widetilde G(0,0)\neq 0$; 

\smallskip

(ii) the constant term in the numerator of the same expression is divisible by $w^m$, after substituting \eqref{expd} (as discussed above); 

\smallskip

(iii) the substitutions \eqref{zeta},\eqref{restrd} together with the factorization \eqref{factord} makes the rest of the numerator divisible by $w^m$; 

\smallskip

(iv) the factorization \eqref{factord} applied to $\Phi^*$ makes the right hand side under consideration in addition divisible by  $w^m$ (after substituting \eqref{expd}). 

In this way, we conclude that the right  hand side of the identity under discussion has the form identical to the right hand side in  \eqref{secondeq} multiplied by $w^m$. Dividing the latter identity by $w^m$ finally proves the claim. \qed

We can now consider the identities \eqref{firsteq},\eqref{secondeq} as a linear system in $f_{z^{k+1}},g_{z^{k+1}}$. Solving it 
by the Cramer rule, we finally obtain \eqref{newcauchy}. 

\qed

\subsection{Pure order of a nonminimal hypersurface}

It might still happen that an $m$-nonminimal hypersurface \eqref{compd} does {\em not} satisfy the $k$-nondegeneracy assumption. To deal with this case, we do (in appropriate coordinates) a blow up in the space of parameters of the Segre family. Related to this procedure is an important invariant of real hypersurface which we  call the {\em pure order}. From the point of view of our method, the pure order replaces, in a certain sense, the nonminimality order.
\begin{definition}\label{pureord}
Let $M\subset\CC{2}$ be a real-analytic Levi-nonflat hypersurface given by \eqref{compd}. The {\em pure order} of $M$ at $0$ is the integer $p$ such that
\begin{equation}\label{pure}
p+1=\mbox{min}_{k,l\geq 1}\,\{l+\mbox{ord}_0\,\Theta_{kl}(\bar w)\}.
\end{equation}
In other words, $p+1$ is the the minimal possible $l+s$ such that for some $k>0$ the term with $z^k\bar z^l\bar w^s$ in the expansion of $\Theta$ does not vanish. 
\end{definition}
Note that:

\smallskip

(i) for a Levi-nonflat hypersurface $p$ is well defined and nonnegative;

(ii) for a Levi-nondegenerate hypersurface we have $p=0$;

\smallskip

(iii) for an $m$-nonminimal hypersurface we have $p\geq m$;

\smallskip

(iv) for an $m$-nonminimal hypersurface with $M\setminus X$ Levi-nondegenerate (the generic case from Section 3) we have $p=m$.

We start with showing that the integer $p$ is a (formal) invariant of a real-analytic hypersurface.

\begin{proposition}
The pure order of a Levi-nonflat hypersurface is invariant under (formal) invertible transformations of hypersurfaces \eqref{compd}. 
\end{proposition}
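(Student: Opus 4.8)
The plan is to show that the minimization quantity defining $p+1$ in \eqref{pure} is unchanged when we pass from $M$ to its image $\widetilde M$ under a formal invertible map $H=(F,G)$, by analyzing how the weighted order of the defining function transforms. First I would set up a weighting convention: assign to the monomial $z^k\bar z^l\bar w^s$ appearing in $\Theta$ the weight $l+s$, so that $p+1$ is the minimal weight over all $k\geq 1$, $l\geq 1$ of a nonvanishing term. The key observation is that the variable $z$ (and $\bar z$) carries weight $0$ while $w$ (and $\bar w$) carries weight $1$; this is precisely the weighting under which the distinguished curve $\Gamma$ in \eqref{Gamma} and the complex locus $X=\{w=0\}$ are respected. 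I would then verify, using \autoref{specialmap1} (or its analogue here), that any formal invertible $H$ is \emph{weighted}: we have $F=z+\cdots$ with $F$ of weighted order $\geq 0$ and $G=O(w)$ with $G_z=O(w^m)$, so that $G$ has weighted order $\geq 1$ and, crucially, the leading behavior is $F\sim z$, $G\sim w$ in the relevant lowest weights.

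Next I would pass to the complex defining equations. Writing $M=\{w=\Theta(z,\bar z,\bar w)\}$ and $\widetilde M=\{W=\widetilde\Theta(Z,\bar Z,\bar W)\}$, the condition $H(M)\subset\widetilde M$ becomes the identity
\[
G(z,w)=\widetilde\Theta\bigl(F(z,w),\overline F(\bar z,\bar w),\overline G(\bar z,\bar w)\bigr)
\qquad\text{on}\quad w=\Theta(z,\bar z,\bar w).
\]
I would substitute the expansions $F=z+\cdots$, $G=w+\cdots$, $\Theta=\bar w+\sum\Theta_{kl}z^k\bar z^l$ and $\widetilde\Theta=\bar W+\sum\widetilde\Theta_{kl}Z^k\bar Z^l$, and compare the lowest-weight nonvanishing terms on each side. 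Because $H$ preserves the weight filtration and acts as the identity to leading order ($F\equiv z$, $G\equiv w$ modulo higher weight), the lowest-weight part of $\Theta$ is carried bijectively onto the lowest-weight part of $\widetilde\Theta$. Concretely, the minimal weight term $z^k\bar z^l\bar w^s$ of $\Theta$ with $k,l\geq 1$ produces, after substitution, a term of the same weight $l+s$ in $\widetilde\Theta$ with the corresponding indices, and conversely; the higher-weight corrections coming from the tails of $F$, $G$ and from $\Theta$ in the substitution $w=\Theta$ cannot lower the weight. This gives $\min_{k,l\geq 1}\{l+\mathrm{ord}_0\,\widetilde\Theta_{kl}\}=\min_{k,l\geq 1}\{l+\mathrm{ord}_0\,\Theta_{kl}\}$, which is the desired invariance.

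The main obstacle I anticipate is the bookkeeping in the substitution $w=\Theta(z,\bar z,\bar w)$: the constant-weight variable $z$ mixes freely, so one must confirm that summing over all the ways a minimal-weight monomial can be generated does not produce cancellation of the leading coefficient. The cleanest way to handle this is to argue that the leading ($\bar z$-linear, lowest $\bar w$-power) part is governed by a triangular (weight-graded) action in which $H$ induces an invertible transformation on each weighted-homogeneous piece; invertibility of $H$ guarantees the induced map on the lowest nonzero graded piece is an isomorphism, hence nonvanishing is preserved in both directions. I would also use the reality condition \eqref{reality}, which relates $\Theta_{kl}$ to $\overline{\Theta_{lk}}$ and ensures the minimization is symmetric in the roles of $z$ and $\bar z$, so it suffices to track the terms $z^k\bar z^l$ with, say, $l$ minimal. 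Once the graded picture is set up, the invariance is essentially the statement that the weighted order of the first nonzero jet of $\Theta$ off the diagonal is a biholomorphic (indeed formal) invariant, and the verification reduces to the leading-order computation $F\equiv z$, $G\equiv w$ already recorded in \autoref{specialmap1}.
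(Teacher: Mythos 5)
Your proposed route is genuinely different from the paper's, which disposes of this proposition in one line by observing that the pure order comes from an ``invariant pair'' in the sense of \cite{Ebenfelt:2009uh} and citing the invariance proof there. A direct weighted-order computation is a legitimate alternative, but as written your sketch has two concrete problems. First, your weighting is internally inconsistent: you declare the weight of $z^k\bar z^l\bar w^s$ to be $l+s$ (as \eqref{pure} requires) but then assign weight $0$ to both $z$ \emph{and} $\bar z$. Those are incompatible; with $\mathrm{wt}(\bar z)=0$ the weight of that monomial is $s$, and your argument would then track $\min_{k,l}\mathrm{ord}_0\Theta_{kl}$ (essentially the nonminimality order $m$) rather than the pure order. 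The correct grading is the asymmetric one $\mathrm{wt}(z)=0$, $\mathrm{wt}(\bar z)=\mathrm{wt}(\bar w)=\mathrm{wt}(w)=1$, and the asymmetry matters: $\overline F(\bar z,\bar w)=\bar a\bar z+\bar b\bar w+\cdots$ mixes two weight-one variables, so the substitution preserves total weight but destroys the separation between the ``mixed'' terms ($l\geq 1$) over which \eqref{pure} minimizes and the $l=0$ terms which are excluded from the minimization. Your claim that the lowest graded piece of $\Theta$ is carried isomorphically to that of $\widetilde\Theta$ does not follow from invertibility of $H$ alone once this mixing is taken into account.

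Second, and relatedly, a necessary ingredient is missing. Splitting the basic identity $G(z,\Theta)=\widetilde\Theta\bigl(F(z,\Theta),\overline F(\bar z,\bar w),\overline G(\bar z,\bar w)\bigr)$ by subtracting its restriction to $\bar z=0$, the left side has weighted order exactly $p+1$ (it equals $G_w\cdot(\Theta-\bar w)$ plus higher weight, and $G_w(0,0)\neq 0$), while the right side contains, besides the contribution of the mixed part of $\widetilde\Theta$ (weight $\geq \tilde p+1$), the term $\overline G(\bar z,\bar w)-\overline G(0,\bar w)=\sum_{j\geq 1}\overline G_j(\bar w)\bar z^j$. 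With only the bound $G_z=O(w^m)$ from \autoref{specialmap1}, which is what you invoke, this term has weight as low as $m+1$, and since $m$ can be strictly less than $p$ the comparison of lowest graded pieces collapses and yields only $p\geq m$. To close the argument one needs the sharper factorization $G_z=O(w^{\tilde p+1})$ with $\tilde p$ the pure order of the \emph{target} --- this is exactly \autoref{factord2}, whose proof uses only the definition of the pure order of $M^*$ and not its invariance, so there is no circularity --- after which the weight count gives $p\geq\tilde p$ and the reverse inequality follows by applying the same argument to $H^{-1}$. Without identifying that estimate, and without fixing the weighting, the ``triangular graded action'' you describe is not yet a proof.
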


\begin{proof}
We note that the pure type introduced above actually comes from an {\em invariant pair}
as introduced in \cite{Ebenfelt:2009uh}; the invariance of those is 
proved in that paper. 
\end{proof}

We now apply the notion of the pure type to prove the following factorization property for CR-maps.
\begin{proposition}\label{factord2}
Let $M,M^*\subset\CC{2}$ be two real-analytic nonminimal at the origin hypersurfaces, and $H=(F,G):\,(M,0)\mapsto (M^*,0)$ a formal map. Then  
\begin{equation}\label{normalmap2}
F_z(0,0)\neq 0, \quad G_w(0,0)=G_0'(0)\neq 0, \quad G(z,w)=O(w), \quad G_z(z,w)=O(w^{p+1}),
\end{equation}
\end{proposition}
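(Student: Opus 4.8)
The plan is to follow the strategy of \autoref{specialmap} and \autoref{specialmap1}, exploiting the complexified basic identity, but now reading off the optimal vanishing order of $G_z$ in $w$ directly from the pure order \eqref{pure}. First I would set up the identity: writing $M$ and $M^*$ as in \eqref{compd} with complex defining functions $\Theta,\Theta^*$, the condition $H(M)\subset M^*$ complexifies to
\[ G\bigl(z,\Theta(z,\chi,\tau)\bigr)=\Theta^*\bigl(F(z,\Theta(z,\chi,\tau)),\,\bar F(\chi,\tau),\,\bar G(\chi,\tau)\bigr), \]
an identity of formal power series in the independent variables $z,\chi,\tau$, where $\bar F,\bar G$ denote the series with conjugated coefficients. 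Throughout I would use the normal-coordinate relations $\Theta(z,0,\tau)=\tau$, $\Theta^*(z',0,\tau')=\tau'$, together with the expansion $\Theta^*(z',\chi',\tau')=\tau'+\sum_{k,l\geq 1}\Theta^*_{kl}(\tau')(z')^k(\chi')^l$.

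To obtain $G=O(w)$ I would specialize $\chi=0$: since $\Theta(z,0,\tau)=\tau$, the identity becomes $G(z,\tau)=\Theta^*\bigl(F(z,\tau),\bar F_0(\tau),\bar G_0(\tau)\bigr)$, where $F_0(w):=F(0,w)$ and $G_0(w):=G(0,w)$. As $H$ fixes the origin, $F_0,G_0=O(w)$, so $\bar F_0,\bar G_0=O(\tau)$; evaluating at $\tau=0$ and using $\Theta^*(z',0,0)=0$ gives $G(z,0)=0$, i.e. $G=O(w)$, and in particular $G_z(0,0)=0$.

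The core point is the bound $G_z=O(w^{p+1})$. Here I would differentiate the basic identity in $z$ and again set $\chi=0$, using $\partial_z\Theta(z,0,\tau)=0$, to obtain
\[ G_z(z,\tau)=\Theta^*_{z'}\bigl(F(z,\tau),\,\bar F_0(\tau),\,\bar G_0(\tau)\bigr)\cdot F_z(z,\tau). \]
Since $\Theta^*_{z'}(z',\chi',\tau')=\sum_{k,l\geq 1}k\,\Theta^*_{kl}(\tau')(z')^{k-1}(\chi')^l$ is divisible by $\chi'$, substituting the arguments $\chi'=\bar F_0(\tau)=O(\tau)$ and $\tau'=\bar G_0(\tau)=O(\tau)$ (all of which vanish at $\tau=0$, so the substitution is legitimate) makes the $(k,l)$-term carry $\tau$-order at least $l+\ord_0\Theta^*_{kl}$, while the remaining factors $F(z,\tau)^{k-1}$ and $F_z(z,\tau)$ do not lower the $\tau$-order. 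Hence the $w$-order of the right-hand side is at least $\min_{k,l\geq1}\{l+\ord_0\Theta^*_{kl}\}=p^*+1$, the pure order of $M^*$. By the invariance of the pure order established above, $p^*=p$, so $G_z=O(w^{p+1})$, exactly as claimed in \eqref{normalmap2}.

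Finally, the two non-vanishing statements follow from invertibility of the formal equivalence $H$: since $G(z,0)\equiv 0$ forces $G_z(0,0)=0$, the Jacobian of $H$ at the origin is upper triangular with diagonal entries $F_z(0,0)$ and $G_w(0,0)=G_0'(0)$, and its determinant is nonzero, whence $F_z(0,0)\neq 0$ and $G_w(0,0)\neq 0$. The main obstacle I anticipate is the order bookkeeping in the displayed derivative identity: one must verify that every substituted argument of $\Theta^*_{z'}$ has no constant term, and that the minimum over $(k,l)$ genuinely reproduces $\min_{k,l\geq1}\{l+\ord_0\Theta^*_{kl}\}$, after which the invariance $p=p^*$ closes the argument.
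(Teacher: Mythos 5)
Your proof is correct and takes essentially the same route as the paper: specialize the complexified basic identity at $\chi=0$ (where $w=\tau$), differentiate in $z$, and observe that each surviving term $\Theta^*_{kl}(\tau')(z')^{k}(\chi')^{l}$, after substituting $\chi'=\bar F(0,\tau)=O(\tau)$ and $\tau'=\bar G(0,\tau)=O(\tau)$, carries $\tau$-order at least $l+\ord_0\Theta^*_{kl}\geq p+1$ by the definition of the pure order. The only cosmetic differences are that you make explicit the appeal to invertibility for the non-vanishing of $F_z(0,0)$ and $G_w(0,0)$ (which the paper delegates to the earlier lemma on formal maps) and the invocation of the invariance $p^*=p$, both of which are consistent with the paper's argument.
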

where $p$ is the pure order of $M,M^*$ at $0$.
\begin{proof}
The proof of all the assertions except the last one goes identically to the proof of \eqref{normalmap}. For the property $G_z(z,w)=O(w^{p+1}),$ we consider the basic identity 
\begin{equation}\label{basic2}
G(z,w)=\Theta^*(F(z,w),\bar F(\bar z,\bar w), \bar G(\bar z,\bar w))|_{w=\Theta(z,\bar z,\bar w)}.
\end{equation}
Putting $\bar z=0$, we get $w=\bar w$, and further differentiating in $z$ gives:
\begin{equation}\label{basic3}
G_z(z,\bar w)=\frac{\partial}{\partial z}\Bigl[\Theta^*(F(z,\bar w),\bar F(0,\bar w), \bar G(0,\bar w))\Bigr].
\end{equation}
We note now that every non-zero term in the expansion of $\Theta^*$ in $z,\bar z,\bar w$ has total degree at least $p+1$ in $\bar  z,\bar w$ (by the definition of $p$). At the same time, since $(F,G)$ preserves the origin, we have $g(0,\bar w)=O(\bar w),\, F(0,\bar w)=O(\bar w)$, so that the whole expression in the square brackets in \eqref{basic3} becomes divisible by $\bar w^{p+1}$. This property persists after differentiating in $z$, and this proves the proposition.
\end{proof}
Next, we prove
\begin{proposition}\label{p1}
Let $M\subset\CC{2}$ be a real-analytic Levi-nonflat hypersurface, and $p$ is its pure type at $0$. Then there exist local holomorphic coordinates \eqref{compd} for $M$ at the origin with \eqref{Theta11}, such that for certain $k\geq 1$ we have:
\begin{equation}\label{p1term}
 \mbox{ord}_0\,\Theta_{k1}(\bar w)=p.
\end{equation} 
\end{proposition}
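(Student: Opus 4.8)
The plan is to analyze how the lowest-order part of the complex defining function transforms under a change of the distinguished curve, and to show that a generic such change produces a term of $\bar z$-degree exactly $1$ realizing the pure order. Assign to each monomial $z^k\bar z^l\bar w^s$ the \emph{antiweight} $l+s$ (the combined degree in the antiholomorphic variables $\bar z,\bar w$); by \eqref{pure} the number $p+1$ is precisely the minimal antiweight occurring among the nonconstant terms of $\Theta-\bar w$. Write $L$ for the antiweight-$(p+1)$ part of $\Theta-\bar w$, so that $L=\sum_{l+s=p+1,\,k,l\geq 1}\theta_{kls}\,z^k\bar z^l\bar w^s$ with at least one $\theta_{kls}\neq 0$. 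Collecting minimal-antiweight terms in the reality condition \eqref{reality} forces every monomial of $L$ to satisfy $k\geq l$ (the swapped monomial $z^l\bar z^k\bar w^s$ has antiweight $k+s\geq p+1$), but this alone does not produce an $l=1$ term, so a genuine coordinate change is needed. I first arrange \eqref{Theta11} by choosing the distinguished curve $\gamma$ (which becomes $\Gamma=\{z=0\}$) to meet the Levi-degeneracy locus $\Sigma\subset M$ only at the origin, which is possible since $\Sigma$ is a proper real-analytic subset.

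Next I vary the linear direction of $\gamma$. Replacing $\gamma$ by a transverse curve with linear part $\{z=cw\}$ amounts, at the level of leading terms, to the tilt $z\mapsto z+cw$, $w\mapsto w$, under which $\bar z\mapsto\bar z+\bar c\,\bar w$. Since $L$ is weighted homogeneous with the $z$-slot carrying antiweight $0$, and since on $M$ one has $w=\bar w+O(\text{antiweight }p+1)$, the antiweight-$(p+1)$ part of the defining function transforms by the explicit substitution
\[
L(z,\bar z,\bar w)\ \longmapsto\ L\bigl(z,\ \bar z+\bar c\,\bar w,\ \bar w\bigr).
\]
Extracting the $\bar z^{\,1}$-part of the right-hand side, and using that $(\bar z+\bar c\,\bar w)^l$ contributes $l\,\bar c^{\,l-1}\bar z\,\bar w^{\,l-1}$ with $s+l-1=p$, gives
\[
\bar z\,\bar w^{\,p}\sum_{k\geq 1} z^k\Bigl(\sum_{l\geq 1} l\,\bar c^{\,l-1}\,\theta_{k,l,\,p+1-l}\Bigr).
\]
For any fixed $k_0$ admitting a nonzero coefficient $\theta_{k_0,l_0,s_0}$, the inner sum is a nonzero polynomial in $c$, hence nonvanishing for all but finitely many values of $c$. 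For such generic $c$ the new coefficient function $\Theta_{k_0 1}$ has nonzero $\bar w^{\,p}$-coefficient, while minimality of the antiweight forces $\mathrm{ord}_0\,\Theta_{k_0 1}\geq p$; therefore $\mathrm{ord}_0\,\Theta_{k_0 1}=p$, which is the desired conclusion.

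The main obstacle is that the tilt destroys the normal form \eqref{compd}: one checks that the condition $\Theta(z,0,\tau)=\tau$ fails, so one must renormalize, horizontalizing the Segre varieties through $\{z=0\}$. The key point is that this renormalization affects only terms of antiweight strictly greater than $p+1$, hence leaves $L$ (and with it the newly created $\bar z^{1}$-term) intact. Indeed, the horizontalizing correction has the form $\hat w=\tilde w-L(\tilde z,\bar c\,\tilde w,\tilde w)+\cdots$, which is \emph{holomorphic} and therefore of antiweight $0$; its conjugate expresses $\bar w$ as $\bar{\hat w}$ plus a sum of monomials $\bar{\tilde z}^{\,k}\bar{\tilde w}^{\,l+s}$ with $k\geq 1$, each of antiweight $k+(p+1)\geq p+2$. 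Substituting this back into the antiweight-$(p+1)$ homogeneous $L$ changes it only at antiweight $>p+1$, and any residual rescaling $z\mapsto\lambda z$ merely multiplies the $\bar z$-coefficients by a nonzero constant. Thus in the genuine normal coordinates adapted to the tilted curve the term of $\bar z$-degree $1$ and antiweight $p+1$ survives. Choosing $c$ generically and completing $\gamma$ to a curve avoiding $\Sigma\setminus\{0\}$ so that \eqref{Theta11} also holds, we obtain coordinates with $\mathrm{ord}_0\,\Theta_{k1}=p$ for some $k\geq 1$, completing the proof.
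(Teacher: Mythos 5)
Your argument is correct and follows essentially the same route as the paper: both proofs tilt the distinguished transverse curve by a generic linear direction ($z\mapsto z+cw$, the paper's $z\mapsto z-\alpha w$ with generic $\alpha$) and observe that the minimal-antiweight part of $\Theta-\bar w$ then acquires a nonzero $z^k\bar z\,\bar w^p$ term, while genericity also preserves \eqref{Theta11}. Your write-up is in fact more detailed than the paper's (which asserts the key computation is ``easy to verify from the basic identity''), notably in checking that the renormalization back to normal coordinates only disturbs terms of antiweight strictly greater than $p+1$.
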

\begin{proof}
Let us choose any coordinates \eqref{compd} for $M$ at $0$ with \eqref{Theta11}. As was discussed above, change of coordinates \eqref{compd} corresponds to chossing a curve $\gamma\subset M$ being transformed to the canonical curve \eqref{Gamma}. Let us choose $\gamma\subset M$ of the kind
$$z=\alpha u, \quad w=u+iq(u), \quad u\in\RR{}$$
for an appropriate real-valued $q(u)$ and a generic $\alpha\in\CC{}$. Then there exists a biholomorphic transformation of the form,
\begin{equation}\label{curve}
z\mapsto z-\alpha w, \quad w\mapsto g(z,w),  \quad g(0,0)=0,
\end{equation}
mapping $M$ into another hypersurface $M^*$ of the form \eqref{compd} and $\gamma$ into the curve \eqref{Gamma} (e.g., \cite{chern}\cite{lmblowups}). If we now fix in the expansion \eqref{compd} of $M$  the non-zero term $z^k\bar z^j\bar w^l,\,j+l=p+1$ with the minimal
 $k\geq 1$, then it is easy to verify from the basic identity that the substitution \eqref{curve} creates, for a  generic $\alpha$, a non-zero term $z^k\bar z\bar w^p$ in the expansion \eqref{compd} for $M^*$. In view of the invariance of the pure order this means the validity of \eqref{p1term} for $M^*$. Moreover, for a generic $\alpha$ in \eqref{curve} the condition \eqref{Theta11} persists as well, and this proves the proposition. 
\end{proof}
\subsection{Proof of the main theorem} Having  \autoref{main} proved in the $k$-nondegenerate case (subsection 4.3) and having the relations \eqref{normalmap2},\eqref{p1term}, we are now in the position to prove \autoref{main} in its full generality.
\begin{proof}[Proof of \autoref{main}] 
According to the outcome of subsection 4.3, it remains to prove the theorem in the case when $M$ is $m$-nonminimal at $0$ but is not $k$-nondegenerate for any $k\geq 1$. Let us choose for $M,M^*$ local holomorphic coordinates according to \autoref{p1}. Then we have the identity \eqref{p1term}, for both the source and the target. Let us then consider the Segre family $\mathcal S=\{Q_p\}_{p=(\bar a,\bar b)}$ of $M$ as a $2$-parameter holomorphic family in $a,b$. Next, let us perform the following blow up in the space of parameters:
\begin{equation}\label{blowupab}
a=\tilde a\tilde b, \quad b=\tilde b.
\end{equation}
Let us denote the new parameterized family by $\widetilde{\mathcal S}$, and keep denoting for simplicity the new parameters by $a,b$. Then, considering an element of the family $\mathcal S$ as a graph $w=w(z)$ and expanding in $z,a,b$, we see that a terms $\lambda z^k a^j b^l$ gets transformed (after the blow up \eqref{blowupab}) into $\lambda z^k a^j b^{j+l}$. We obtain from here the crucial corollary that {\em all terms in the expansion of $w(z,a,b)$ except the very first term $z^0a^0b^1$ are divisible by $b^{p+1}$}. Furthermore, the non-zero (in view of \eqref{p1term}) term $\lambda z^kab^p,\,k\geq 1$ gets transformed into $z^kab^{p+1}$. We conclude that the transformed family $\widetilde{\mathcal S}$ has the form identical to \eqref{segrefam} with the nondegeneracy property \eqref{segredif}, with the only difference that $m$ is replaced by $p+1$. Hence, arguing identically to subsection 4.2, we conclude that the family $\widetilde{\mathcal S}$ (and hence the family $\mathcal S$!) satisfy a system of ODEs, identical to  \eqref{assocd} with the only difference that, again, $m$ is replaced by $p+1$. The same statement applies for the target $M^*$, and we conclude that the given formal map $(F,G)$ between $(M,0)$ and $(M^*,0)$ satisfies an identity similar to \eqref{tangency} with $m$ replaced by $p+1$. 

We finally recall that $(F,G)$ satisfies the factorization \eqref{normalmap2}, which is identical to \eqref{normalmap1} with, again, $m$ replaced by $p+1$. This allows to repeat the proof in the $k$-nondegenerate case word-by-word (we recall that the crucial \autoref{initterms} is valid without any further assumptions and thus is applicable to the map $(F,G)$). \autoref{main} is proved.

\end{proof}

\begin{proof}[Proof of \autoref{main2}]
The assertion of the theorem immediately follows from the crucial \autoref{summable} and \autoref{initterms}, and the representations \eqref{Finally},\eqref{finally1}.
\end{proof}




\end{document}